\newlength\OneImW
\newlength\BigOneImW
\newlength\twofigwidth
\newtheorem{Corollary}{Corollary}
\newtheorem{theorem}{Theorem}
\newtheorem{lemma}{Lemma}
\newtheorem{Definition}{Definition}
\DeclareMathOperator*{\lcm}{lcm}
\newcommand{\vvert}[0]{{\, \vert \, }}
\newcommand{\Zp}[1]{\mathbb{Z}_{p^{#1}}}
\newcommand{\ord}[0]{\mathrm{ord}}
\newcommand{\bnu}{\boldsymbol{\nu}}
\newcommand{\bv}[0]{\mathbf{v}}
\newcommand{\bx}[0]{\mathbf{x}}
\newcommand{\bM}[0]{\mathbf{M}}
\newcommand{\by}[0]{\mathbf{y}}
\newcommand{\bQ}[0]{\mathbf{Q}}
\newcommand{\bA}[0]{\mathbf{A}}
\newcommand{\bJ}[0]{\mathbf{J}}
\newcommand{\bD}[0]{\mathbf{D}}
\newcommand{\bo}[0]{\mathbf{0}}
\begin{document}

\title{The Graph Structure of a Class of Permutation Maps over Ring $\mathbb{Z}_{p^k}$}

\author{Kai Tan, Chengqing Li
    \thanks{This work was supported by the National Natural Science Foundation of China (no.~92267102).
    }
    \thanks{K. Tan and C. Li are with the School of Computer Science, Xiangtan University, 
Xiangtan 411105, Hunan, China (chengqingg@gmail.com).}
}
    \markboth{IEEE Transactions}{Li \MakeLowercase{\textit{et al.}}}
    \IEEEpubid{\begin{minipage}{\textwidth}\ \\[12pt]\centering
    \\[2\baselineskip]
    0018-9448 \copyright 2024 IEEE. Personal use is permitted, but republication/redistribution requires IEEE permission.\\
    See http://www.ieee.org/publications\_standards/publications/rights/index.html for more information.\\ \today{}\space(\currenttime)
    \end{minipage}
}

\maketitle

\begin{abstract}
Understanding the periodic and structural properties of permutation maps over residue rings such as $\mathbb{Z}_{p^k}$ is a foundational challenge in algebraic dynamics and pseudorandom sequence analysis. Despite notable progress in characterizing global periods, a critical bottleneck remains: the lack of explicit tools to analyze local cycle structures and their evolution with increasing arithmetic precision.
In this work, we propose a unified analytical framework to systematically derive the distribution of cycle lengths for a class of permutation maps over $\mathbb{Z}_{p^k}$. The approach combines techniques from generating functions, minimal polynomials, and lifting theory to track how the cycle structure adapts as the modulus $p^k$ changes. To validate the generality and effectiveness of our method, we apply it to the well-known Cat map as a canonical example, revealing the exact patterns governing its cycle formation and transition.
This analysis not only provides rigorous explanations for experimentally observed regularities in fixed-point implementations of such maps but also lays a theoretical foundation for evaluating the randomness and dynamical behavior of pseudorandom number sequences generated by other nonlinear maps. The results have broad implications for secure system design, computational number theory, and symbolic dynamics.
\end{abstract}

\begin{IEEEkeywords}
Cycle structure, Cat map, linear recurrence sequence,
period distribution, permutation maps,
PRNS, pseudorandom number sequence.
\end{IEEEkeywords}

\section{Introduction}

\IEEEPARstart{L}{inear} recurring sequences are a class of sequences where each term is a linear combination of the preceding terms, which are defined by a recurrence relation and initial conditions. The typical sequence generator of linear recurring sequence over the ring $\mathbb{Z}_{p^k}$ include the linear feedback shift register (FSR) \cite{Golic:LFSR:TIT2006, Li:LFSR:TIT2014, Chang:LFSR:TIT2020, Yu:PredictingLFSR:2024}, Chebyshev polynomial \cite{Liao:Che:TC2010, Yoshioka:Chebyshev2k:TCAS2:2016}, discrete Arnold's Cat map\cite{Chen:cat:TIT2012, chenf:cat2:TIT13, chenf:cat:TCS14, Hua:Designing:TCy2018, cqli:Cat:TC22}, R\'enyi map \cite{Addabbo:Reyi:TCSI2007}, inversive congruential pseudorandom number generator \cite{Sole:IPRG:EJC2009, xujun:congru:crypto19}, etc. Linear FSRs can be used to construct maximum-length nonlinear FSRs, essential building blocks for stream ciphers 
\cite{Li:LFSR:TIT2014, Chang:LFSR:TIT2020}. Chebyshev polynomials play a crucial role in the design of public-key algorithms, owing to their distinctive properties and the semi-group characteristics they exhibit \cite{Liao:Che:TC2010, Dariush:Chebyshevprivacy:TII2020}. The Cat map is employed both as a pseudorandom number generator \cite{Hua:Designing:TCy2018, Souza:cat3m:CSII2021, Souza:cat2m:TIM2022} and as a scrambling component in encryption algorithms \cite{Tong:image:ND2015}. In conclusion, these sequences find extensive application across various domains, serving as fundamental components \cite{knuth1985deciphering}.

The period is a significant characteristic of linear recurring sequences. A more extended period implies a more complex and secure sequence, as repeating sequences take longer to emerge. Numerous studies analyze the length of the period.
The period distribution of sequences generated by Chebyshev polynomials over the finite field $\mathbb{Z}_N$ has been analyzed, and public-key algorithms based on these sequences have been optimized \cite{Liao:Che:TC2010}. The distribution of the least period of the generalized discrete Cat map over the Galois ring $\mathbb{Z}_{p^k}$ with different parameters has been demonstrated when $p\geq3$ \cite{Chen:cat:TIT2012}. Subsequently, these analyses further extended to $\mathbb{Z}_{2^{k}}$ \cite{chenf:cat2:TIT13} and $\mathbb{Z}_N$ \cite{chenf:cat:TCS14}, respectively.
In 2016, Yoshioka analyzed the relation of the period of the Chebyshev integer sequence over the ring $\mathbb{Z}_{2^k}$ and revealed the insecure property of the public-key cryptosystem based on the sequence \cite{Yoshioka:Chebyshev2k:TCAS2:2016}.
We reveal the period distribution of the sequence generated by Arnold's Cat map with given parameters over
$\mathbb{Z}_{2^k}$ \cite{cqli:Cat:TC22}. Then the period distribution of Chebyshev integer sequence over the ring $\mathbb{Z}_{p^k}$ has been demonstrated when $p\geq3$ \cite{cqli:Cheby:TIT25}.
These studies use various mathematical approaches. In \cite{Liao:Che:TC2010, Chen:cat:TIT2012, chenf:cat2:TIT13, chenf:cat:TCS14}, these studies combine the generation function and Hensel lifting method, which are based on the minimal polynomial. However, these works primarily analyze the relation between the least period of the Cat map and the Chebyshev polynomial parameters rather than exploring the period distribution across all sequences generated with different initial states. In studying the properties of the period distribution of linear recurring sequences as $k$ increases, many researchers leverage insights from various approaches, such as matrix theory \cite{cqli:Cat:TC22} and analytic number theory \cite{Yoshioka:Chebyshev2k:TCAS2:2016, cqli:Cheby:TIT25,cqli:IPNG:ND25}. Furthermore, the period distribution of different linear recurring sequences should exhibit a consistent property influenced by their ordinary minimal polynomials. This finding is a key contribution of our work.

Arnold's Cat map is a well-known permutation that exhibits a linear recurrence relation over $\mathbb{Z}_N$. It is commonly applied in various research fields, such as quantum chaotic field theories 
quantum Cat map \cite{Kurlberg:Cat:AM05, Axenides:cat:PRE23}, image encryption \cite{liaox:catEncrypt:TITS23, Huang:Arnold:TII23} and the generation of pseudorandom sequences, due to its chaotic behavior in the real number field.
Several methods for generating higher-dimensional Cat maps have been proposed to enhance randomness and expand the parameter space \cite{cqli:Baker:TC25}.

An $n$-dimensional Cat map generation method using Laplace expansions was proposed to iteratively construct an $n$-dimensional Cat matrix \cite{Hua:Designing:TCy2018}. In \cite{Hua:Designing:TCy2018}, the relation between the Lyapunov exponent of the Cat map and its associated Cat matrix is revealed, and a model for constructing $n$-dimensional Cat maps is proposed based on this property. Zhang et al. introduced a method based on Pascal-matrix theory to build an $n$-dimensional Cat matrix \cite{Zhang:2022:Pascal}.
Therefore, Arnold's Cat map is an excellent example of the period distribution of linear recurrence sequence over $\mathbb{Z}_{p^k}$.
The periodic behavior of Arnold's Cat map on a discrete torus is explored in \cite{dyson1992periodCat}.
The period distribution of Arnold's Cat map has been analyzed over different rings \cite{Chen:cat:TIT2012, chenf:cat2:TIT13, chenf:cat:TCS14}. Souza et al. propose a pseudorandom number generator based on the discrete Arnold’s Cat map defined over the integer ring $\mathbb{Z}_{2^m}$ \cite{Souza:cat2m:TIM2022} and $\mathbb{Z}_{3^m}$ \cite{Souza:cat3m:CSII2021}, respectively.
In 2022, we revealed the period distribution of the sequence generated by the 2D generalized discrete Arnold’s Cat map (hereafter referred to as the Cat map for brevity)
with given parameters over $\mathbb{Z}_{2^k}$ \cite{cqli:Cat:TC22}.

In this work, we present properties related to the explicit periodicity analysis of linear recurrence sequences over $\mathbb{Z}_{p^k}$ and disclose the graph structure of these sequences. We analyze the period distribution of sequences generated by Arnold's Cat map over $\mathbb{Z}_{p^k}$ as an example to verify the results.

The remainder of this paper is organized as follows
Section~\ref{preli} reviews the necessary background on linear recurrences and generating functions. Section~\ref{sec:main} establishes the period distribution of a class of permutation maps over $\mathbb{Z}_{p^k}$ and illustrates the results using the Cat Map as a representative example. 
The last section concludes the paper.

\section{Preliminary}
\label{preli}

\begin{table*}[!htb]
\caption{List of some mathematical notations.}
\centering
\setlength{\tabcolsep}{3pt}
    \begin{tabular}{ll|ll}
    \hline
    $\mathbb{Z}$ & The set of all integers.  &$\mathbb{N}^+$  & The set of all positive integers.  \\ \hline
    $\mathcal{Z}_{p^k}$ & The set of non-negative integers less than $p^k$.    & $P'_k(f)$  & The least positive integer $l$ such that $f(t)\mid t^l-1$ in $\mathbb{Z}_{p^k}[t]$.  \\ \hline
    $\Zp{k}$            & The ring of residue classes of the integers modulo $p^k$.
    & $P_k(f)$    & The least positive integer $l$ such that $f(t)\mid t^l-1$ in $\mathbb{R}_{k}[t]$.\\ \hline 
    $\ord_k(x)$     & The least positive number $j$ such that $x^j\equiv 1\pmod p$ in $\mathbb{R}_k$.  & $T_k$    & The least period of $\mathbf{A}(\bx)$ over $\mathbb{Z}^{mn}_{p^k}$.\\ \hline
   $\mathbf{A}^i(\bx)$ & The composition of $\mathbf{A}(\bx)$ with itself $i$ times.
    &$a\mid b$    & The integer $a$ divides the integer $b$. \\ \hline
    $a\equiv b\pmod{q}$ & The base $a$ is congruent to the residue $b$ modulo $q$. &$\gcd(a, b)$ & The greatest common divisor of the integers $a$ and $b$.\\ \hline
   $G_{p^k}$ & The functional graph of the mapping $\bA$.& $\bnu(x)$& The exponent of the highest power of $p$ that divides integer $x$. \\ \hline
   \end{tabular}
\label{tab:list:symbol}
\end{table*}


Given an $n$-dimensional map $\mathbf{P}(\mathbf{a})$ over $\Zp{k}^n$, 
the ring of $n$-dimensional column vectors over $\Zp{k}$, let $S_{\mathbf{a}}=
\{\mathbf{P}^i(\mathbf{a})\}_{i=0}^\infty$
denote the sequence generated by iterating $\mathbf{P}$ from the initial state $\mathbf{a}$, where $\mathbf{P}^i$ represents 
the $i$-fold application of $\mathbf{P}$, and $\mathbf{a}=(a_1, a_2, \cdots, a_n)^\intercal\in\Zp{k}^n$.
If the sequence $S_{\mathbf{a}}$ satisfies the recurrence relation
\begin{equation}\label{eq:2:cP}
\mathbf{P}^i(\mathbf{a})+c_1 \mathbf{P}^{i-1}(\mathbf{a})+\cdots+c_L \mathbf{P}^{i-L}(\mathbf{a})=0
\end{equation}
for all $i\ge L$, it is called an $L$-order linear recurrence over $\Zp{k}^n$.
The characteristic polynomial of the sequence is given by
$f_{\rm c}(t)=1+c_1 t+\cdots+c_{L-1} t^{L-1}+ c_L t^L$.
Note that the operators in \eqref{eq:2:cP} can be compactly represented as $f_c(\mathbf{P})$.

Let the generating function of the vector-valued sequence be defined as $G_{\mathbf{a}}(t)=\sum_{i=0}^{\infty} d(\mathbf{P}^i(\mathbf{a}))t^i$, where
$d(\bx)=\sum_{i=1}^n x_i p^{i-1}$ is a function applied element-wise to each coordinate of the vector $\bx=(x_1, x_2, \cdots, x_n)^\intercal\in\Zp{k}^n$. 
Then, one has 
\begin{multline*}
  G_{\mathbf{a}}(t) \cdot f_{\rm c}(t) = \sum_{i=0}^{L-1} \left( \sum_{j=0}^{i} c_j d(\mathbf{P}^{i-j}(\mathbf{a})) \right) t^i \\
    \quad + \sum_{i=L}^{\infty} \left( \sum_{j=0}^{L} c_j d(\mathbf{P}^{i-j}(\mathbf{a})) \right) t^i. 
\end{multline*}
Since 
$\sum_{j=0}^{L} c_j d(\mathbf{P}^{i-j}(\mathbf{a}))=d(\sum_{j=0}^{L} c_j \mathbf{P}^{i-j}(\mathbf{a}))$ for any $i\ge L$, one has
\begin{equation}\label{eq:PRE:gt}
G_{\mathbf{a}}(t) \cdot f_{\rm c}(t)=g(t)
\end{equation}
from \eqref{eq:2:cP}, where $g(t)= \sum_{i=0}^{L-1} \left( \sum_{j=0}^{i} c_j d(\mathbf{P}^{i-j}(\mathbf{a})) \right) t^i$. Moreover, 
it follows from \eqref{eq:2:cP} that
\begin{equation}\label{eq:PRE:Ht}
H(t)=(1-t^{T_s})\cdot G_{\mathbf{a}}(t)=\sum_{i=0}^{T_s-1}d(\mathbf{P}^i(\mathbf{a}))t^i, 
\end{equation}
where $T_s$ denotes the least period of the sequence $S_{\mathbf{a}}$. 
There is $f(t)=b_0+b_1 t+\cdots+b_{m-1}t^{m-1}+b_mt^m$ such that
\begin{equation}\label{eq:ft}
f(t)=\frac{f_{\rm c}(t)}{\gcd(g(t), f_{\rm c}(t))}.
\end{equation}
Since $f(t)$ is the polynomial of the smallest degree among all characteristic polynomials of the sequence $S_{\mathbf{a}}$, it is referred to as the \textit{minimal polynomial} of the sequence.

The minimal polynomial can also be expressed using the generating function.
From~\eqref{eq:PRE:gt} and~\eqref{eq:PRE:Ht}, it follows that
\[
\frac{f_{\rm c}(t)}{g(t)}=\frac{1-t^{T_s}}{H(t)}, 
\]
which, together with \eqref{eq:ft}, implies that the minimal polynomial is given by
$f(t)=\frac{1-t^{T_s}}{\gcd(H(t), 1-t^{T_s})}$, removing the redundant factors to capture only the essential periodicity.
Thus, as stated in Lemma~\ref{le:2:T=P_e}, the least period of a sequence is determined by the order of the corresponding minimal polynomial. When $k=1$, $\mathbb{Z}_p$ is a finite field, the order of the 
polynomial is quantified as Lemma~\ref{le:2:P_1} shown.

\begin{lemma}\label{le:2:T=P_e}
For a periodic sequence $S_{\mathbf{a}}$ over $\Zp{k}^n$, the order of its minimal polynomial $f(t)$ over $\mathbb{Z}_{p^k}$, denoted as $P'_k(f)$, is equal to the least period of $S_{\mathbf{a}}$.
\end{lemma}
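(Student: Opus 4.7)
The plan is to prove $T_s = P_k'(f)$ by a two-sided inequality, leaning on the explicit formula for $f(t)$ derived just before the lemma together with the characteristic-polynomial property of $f$.

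The inequality $P_k'(f)\le T_s$ is essentially free. The construction
\[
f(t)=\frac{1-t^{T_s}}{\gcd(H(t),\,1-t^{T_s})}
\]
immediately exhibits $f(t)$ as a divisor of $1-t^{T_s}$, hence of $t^{T_s}-1$, in $\mathbb{Z}_{p^k}[t]$. Thus $T_s$ is an admissible exponent in the definition of $P_k'(f)$, and minimality of the latter gives the bound.

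For the reverse inequality I would set $l:=P_k'(f)$ and use $f(t)\mid t^l-1$ to write $t^l-1=h(t)\,f(t)$ in $\mathbb{Z}_{p^k}[t]$. Because $f$ is by construction a characteristic polynomial of $S_{\mathbf{a}}$ (indeed the one of smallest degree), with $f(t)=b_0+b_1 t+\cdots+b_m t^m$ we have $\sum_{j=0}^m b_j\,\mathbf{P}^{i-j}(\mathbf{a})=\bo$ for every $i\ge m$. Treating $t$ as the backward-shift operator on the sequence and expanding $h(t)=\sum_{j\ge 0} h_j t^j$, applying $h(t)$ termwise to the identity $f(t)\,\mathbf{P}^{i}(\mathbf{a})=\bo$ gives $(t^l-1)\,\mathbf{P}^i(\mathbf{a})=\bo$, i.e.\ $\mathbf{P}^{i-l}(\mathbf{a})=\mathbf{P}^{i}(\mathbf{a})$ for all $i\ge l$. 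Since $S_{\mathbf{a}}$ is periodic, this relation propagates to every index, so $l$ is a period of $S_{\mathbf{a}}$. The minimality of the least period $T_s$ then forces $T_s\mid l$, and in particular $T_s\le l = P_k'(f)$, closing the argument.

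The main subtlety is the purely formal bookkeeping in the shift-operator step, since $\mathbb{Z}_{p^k}$ is not a field and $f(t)$ need not be monic. I would verify that (i) the quotient $h(t)$ really lives in $\mathbb{Z}_{p^k}[t]$, which is guaranteed by the definition of divisibility used in $P_k'(f)$; (ii) the index ranges line up, so that each occurrence of $f(t)\,\mathbf{P}^{i-j}(\mathbf{a})$ called upon while expanding $h(t)\,f(t)\,\mathbf{P}^{i}(\mathbf{a})$ is actually annihilated by $f$ (this is why we need $i\ge l$ and not merely $i\ge m$); and (iii) the passage from "$\mathbf{P}^{i-l}(\mathbf{a})=\mathbf{P}^{i}(\mathbf{a})$ for large $i$" to "$l$ is a period of $S_{\mathbf{a}}$" is legitimate, which holds because $S_{\mathbf{a}}$ is purely periodic, as expected for an orbit of a permutation on the finite set $\Zp{k}^n$. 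Once these routine points are checked, the two inequalities combine to yield $T_s=P_k'(f)$.
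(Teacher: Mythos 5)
Your proof is correct, but be aware that the paper does not prove this lemma at all: its entire ``proof'' is the citation ``See proof in [Zierler 1959],'' a reference that treats linear recurring sequences over finite fields. Your two-sided divisibility argument is essentially the classical one from that literature, but instantiated with the paper's own constructions: the formula $f(t)=\frac{1-t^{T_s}}{\gcd(H(t),\,1-t^{T_s})}$ gives $f(t)\mid t^{T_s}-1$ and hence $P'_k(f)\le T_s$ by minimality, while the annihilator property of $f$ (the paper's recurrence~\eqref{eq:2:An2x}) pushed through the cofactor $h(t)$ shows $P'_k(f)$ is a period of $S_{\mathbf{a}}$, giving $T_s\le P'_k(f)$. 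What your route buys is real: Zierler's theorem lives over a field, where the annihilator ideal is principal and gcds are unambiguous, and its transfer to $\mathbb{Z}_{p^k}$ is not automatic, so a self-contained argument that uses only ring-level divisibility in $\mathbb{Z}_{p^k}[t]$ is more appropriate here than the bare citation. The one caveat worth stating explicitly is that your direction $T_s\le P'_k(f)$ rests on $f(t)=f_{\rm c}(t)/\gcd(g(t),f_{\rm c}(t))$ genuinely being a characteristic polynomial of $S_{\mathbf{a}}$; over the non-field ring $\mathbb{Z}_{p^k}$ this is asserted by the paper's preliminaries rather than proved (gcds in $\mathbb{Z}_{p^k}[t]$ need not behave as over a field), so your proof is complete modulo that standing assumption of the framework, not independently of it. Your bookkeeping points are all sound: the threshold $i\ge l$ in (ii) is exactly the right one given $\deg h = l-m$, and the computation in fact yields $\mathbf{P}^{j+l}(\mathbf{a})=\mathbf{P}^{j}(\mathbf{a})$ for all $j\ge 0$ directly, so the pure-periodicity check in (iii) is not even needed to conclude that $l$ is a period.
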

\begin{proof}
See proof in \cite{Linear:Zierler:1959}.
\end{proof}

\begin{lemma}\label{le:2:P_1}
Let \( f(x) \) be a polynomial over \( \mathbb{F}_q \) with a nonzero constant term and assume that  
\[ f(x) = f_1(x)^{e_1} f_2(x)^{e_2} \cdots f_r(x)^{e_r}, \]  
where \( f_1(x), f_2(x), \ldots, f_r(x) \) are \( r \) distinct irreducible polynomials in \( \mathbb{F}_q[x] \) and \( e_1, e_2, \ldots, e_r \) are \( r \) positive integers, then  
\[ 
p(f) = \lcm(p(f_1), p(f_2), \ldots, p(f_r)) \cdot p^t,
\]
where $t=\min\{t\vvert p^t \geq \max(e_1, e_2, \ldots, e_r)\}$.
\end{lemma}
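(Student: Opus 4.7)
The plan is to reduce the statement to two ingredients: a multiplicativity property of the order with respect to coprime factors, and an explicit formula for the order of a prime power of an irreducible polynomial. Recall that $p(f)$ is the smallest positive integer $l$ such that $f(x) \mid x^l - 1$ in $\mathbb{F}_q[x]$.

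First I would establish the coprime reduction: if $g_1, g_2 \in \mathbb{F}_q[x]$ have nonzero constant terms and $\gcd(g_1, g_2) = 1$, then $\ord(g_1 g_2) = \lcm(\ord(g_1), \ord(g_2))$, because coprimality gives $g_1 g_2 \mid x^l - 1$ iff both $g_i \mid x^l - 1$, which by the definition of order happens iff $\ord(g_i) \mid l$ for each $i$. Iterating over the $r$ pairwise coprime factors $f_i^{e_i}$ immediately yields $p(f) = \lcm(p(f_1^{e_1}), \ldots, p(f_r^{e_r}))$.

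Second, I would compute $p(f_i^{e_i})$ in terms of $p(f_i)$. Let $e = p(f_i)$ and let $t_i$ be the smallest integer with $p^{t_i} \geq e_i$. The upper bound uses the Frobenius identity in characteristic $p$: $(x^e - 1)^{p^{t_i}} = x^{e p^{t_i}} - 1$. Since $f_i \mid x^e - 1$, raising to the $p^{t_i}$-th power gives $f_i^{p^{t_i}} \mid x^{e p^{t_i}} - 1$, and therefore $f_i^{e_i} \mid x^{e p^{t_i}} - 1$, so $p(f_i^{e_i}) \leq e p^{t_i}$. For the matching lower bound, suppose $f_i^{e_i} \mid x^l - 1$. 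Then $f_i \mid x^l - 1$ already forces $e \mid l$, so write $l = en$ and decompose $n = p^u n'$ with $\gcd(n', p) = 1$. Crucially, since $f_i$ is irreducible of degree $d$ over $\mathbb{F}_q$ with $q$ a power of $p$, the order $e$ divides $q^d - 1$ and is thus coprime to $p$; consequently $x^{e n'} - 1$ is separable and $f_i$ appears in it with multiplicity exactly one. Since $x^l - 1 = (x^{e n'} - 1)^{p^u}$ by Frobenius, the multiplicity of $f_i$ in $x^l - 1$ is exactly $p^u$, and the hypothesis then forces $p^u \geq e_i$, hence $u \geq t_i$ and $l \geq e p^{t_i}$. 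This establishes $p(f_i^{e_i}) = p(f_i) \cdot p^{t_i}$.

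Finally, combining the two steps and using that every $p(f_i)$ is coprime to $p$, the prime-power factor pulls out of the LCM cleanly: $p(f) = \lcm(p(f_1), \ldots, p(f_r)) \cdot p^{\max_i t_i}$, and $\max_i t_i$ is precisely the smallest $t$ with $p^t \geq \max_i e_i$, matching the claim. The main obstacle is the precise multiplicity count in the lower bound of step two: the whole argument hinges on knowing that $e$ is coprime to $p$, which is what makes the factorization $x^l - 1 = (x^{e n'} - 1)^{p^u}$ yield a clean multiplicity of $p^u$ and identifies $u$ with the $p$-adic valuation of $n$.
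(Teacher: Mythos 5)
Your proof is correct. Note that the paper does not prove this lemma at all---it simply cites Wan's \emph{Lectures on Finite Fields and Galois Rings}---and your argument is precisely the standard one from that literature: reduce to pairwise coprime factors via the lcm property, then show $p(f_i^{e_i}) = p(f_i)\cdot p^{t_i}$ by combining the Frobenius identity $(x^e-1)^{p^u}=x^{ep^u}-1$ (upper bound) with the multiplicity count in $x^l-1=(x^{en'}-1)^{p^u}$, which is valid exactly because $p(f_i)$ divides $q^{\deg f_i}-1$ and is therefore coprime to $p$, making $x^{en'}-1$ separable (lower bound). Both that multiplicity argument and the final step pulling $p^{\max_i t_i}$ out of the lcm are sound, so your write-up supplies a self-contained proof where the paper only gives a pointer.
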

\begin{proof}
See proof in \cite{Wan:Lecture:2003}.
\end{proof}

It follows from \eqref{eq:2:cP} and the definition of $f(t)$ that
\begin{equation}\label{eq:2:An2x}
\mathbf{P}^{i}(\mathbf{a})=-\sum_{j=1}^{m}b_i\mathbf{P}^{i-j}(\mathbf{a}).
\end{equation}
To disclose the relation between $f(t)$ and $\mathbf{P}(\mathbf{a})$, construct a map
\begin{equation}\label{eq:3:ma=}
\mathbf{A}(\bx)=
\bM\cdot \bx
\end{equation}
over $\mathbb{Z}^{mn}_{p^k}$, 
\begin{equation*}
\bM=
\begin{bmatrix}
    \bo      & \mathbf{I}    &\cdots & \bo\\
    \vdots          & \vdots        &\vdots & \vdots    \\
    \bo      & \bo    &\cdots & \mathbf{I}\\
  -b_1\mathbf{I}    & -b_2\mathbf{I}  &\cdots & -b_{m}\mathbf{I}
\end{bmatrix}_{mn\times mn}, 
\end{equation*}
and $\mathbf{I}$ denotes the $n\times n$ identity matrix.
The minimal polynomial of sequence $\{\bM^i\cdot \bx\}_{i\geq 0}$ is also $f(t)$, and its least period over $\mathbb{Z}^{mn}_{p^k}$
is denoted by $T_k$.
For an initial state $\bx_0$, let $T_k(\bx_0)$ be the least period of $\{\mathbf{A}^i(\bx)\}_{i\geq 0}$ over $\Zp{k}$. Note that
$T_k{(\bx_0)}$ is a divisor of $T_k$.

For any state $\mathbf{a}$ in $\mathbb{Z}^n_{p^k}$, one can form the state
\[
\bx =(\mathbf{a}^\intercal, (\mathbf{P}(\mathbf{a}))^\intercal, (\mathbf{P}^2(\mathbf{a}))^\intercal, \cdots, (\mathbf{P}^{m-1}(\mathbf{a}))
^\intercal)^\intercal\in \mathbb{Z}^{mn}_{p^k}.
\]
However, given an arbitrary state $\bx$ in $\mathbb{Z}^{mn}_{p^k}$, there may not exist a corresponding state $\mathbf{a}$ in $\mathbb{Z}^n_{p^k}$ that satisfies the above relationship. Consequently, one may first analyze the structure of the functional graph generated by $\mathbf{A}(\bx)$ and then derive that of $\mathbf{P}(\mathbf{a})$ by truncation.

To enhance the readability of this paper, Table~\ref{tab:list:symbol} enumerates the notations recurrently used throughout the paper.

\section{The period distribution of a permutation map over $\Zp{k}$}
\label{sec:main}

\subsection{The Period Dstribution of $\mathbf{A}(\bx)$ over $\Zp{k}^{mn}$}
\label{subsec:3A}

Given the minimal polynomial $f(t)$, the companion matrix $\bM$ in (\ref{eq:3:ma=}) is uniquely determined.
The analysis of the period distribution of map $\mathbf{A}(\bx)$ relies on the
order of $f(t)$ and its factorization in $\Zp{k}[t]$.
When $f(t)$ is irreducible, one passes to the minimal ring extension $\mathbb{R}_k$ of $\mathbb{Z}_{p^k}$, over which $f(t)$ splits into linear factors. 
Lemma~\ref{le:3:orderZ=R} then shows that the order of $f(t)$ in $\mathbb{Z}_{p^k}[t]$ coincides with its order in $\mathbb{R}_k[t]$. Moreover, Lemma~\ref{le:3A:P_kfp} establishes that, beyond a threshold $k_{\rm s}$, the order in $\mathbb{R}_k[t]$ increases by a factor of $p^{k-k_{\rm s}}$ relative to $P_1(f)$.

\begin{lemma}\label{le:3:orderZ=R}
The order of $f(t)$ in $\mathbb{Z}_{p^k}[t]$, denoted by $P'_k(f)$, is equal to its order in $\mathbb{R}_{k}[t]$, denoted by $P_k(f)$.
\end{lemma}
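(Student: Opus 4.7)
The plan is to prove the two inequalities $P_k(f)\le P'_k(f)$ and $P'_k(f)\le P_k(f)$ by exploiting that $\mathbb{R}_k$ is a ring extension of $\mathbb{Z}_{p^k}$ and that $f(t)$ admits a monic representative (or more generally a leading coefficient that is a unit, since $f(t)$ arises from the characteristic polynomial attached to the companion matrix $\mathbf{M}$ in \eqref{eq:3:ma=}).

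First I would dispatch the easy direction. If $l=P'_k(f)$, then by definition there exists $q(t)\in\mathbb{Z}_{p^k}[t]$ with $t^l-1=f(t)q(t)$. Since $\mathbb{Z}_{p^k}[t]\subseteq\mathbb{R}_k[t]$, the same identity witnesses $f(t)\mid t^l-1$ in $\mathbb{R}_k[t]$, hence $P_k(f)\mid P'_k(f)$ and in particular $P_k(f)\le P'_k(f)$.

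For the reverse direction, let $l=P_k(f)$, so that $t^l-1=f(t)\tilde q(t)$ for some $\tilde q(t)\in\mathbb{R}_k[t]$. Because $f(t)$ is monic, the Euclidean division algorithm is available inside $\mathbb{Z}_{p^k}[t]$: there exist unique $q(t),r(t)\in\mathbb{Z}_{p^k}[t]$ with $t^l-1=f(t)q(t)+r(t)$ and $\deg r<\deg f$. Viewing this identity inside the larger ring $\mathbb{R}_k[t]$, the same monic divisor $f(t)$ permits a unique division with remainder in $\mathbb{R}_k[t]$; comparing with $t^l-1=f(t)\tilde q(t)+0$ forces $r(t)=0$ (and $q(t)=\tilde q(t)$). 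Therefore $f(t)\mid t^l-1$ already in $\mathbb{Z}_{p^k}[t]$, so $P'_k(f)\mid l=P_k(f)$, and in particular $P'_k(f)\le P_k(f)$.

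Combining the two inequalities yields $P'_k(f)=P_k(f)$. The only place needing care is the invocation of the division algorithm, since general polynomial division over a (non-field) ring like $\mathbb{Z}_{p^k}$ requires the divisor to have a unit leading coefficient; this is the main obstacle, but it is resolved because the minimal polynomial coming from the companion-matrix construction in \eqref{eq:3:ma=} can be taken monic. Once monicity is secured, uniqueness of Euclidean division transfers the factorization from $\mathbb{R}_k[t]$ down to $\mathbb{Z}_{p^k}[t]$ with no further work.
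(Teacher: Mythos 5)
Your proof is correct, and the easy direction ($P_k(f)\mid P'_k(f)$ via viewing the witness identity in the extension ring) coincides with the paper's. For the hard direction, however, you take a genuinely different route. The paper decomposes the quotient $g'(t)\in\mathbb{R}_k[t]$ satisfying $g'(t)f(t)=t^{P_k(f)}-1$ along a generating set $\{1,\beta_1,\dots,\beta_s\}$ of $\mathbb{R}_k$ over $\mathbb{Z}_{p^k}$, writes $g'(t)=g'_0(t)+\sum_i\beta_i g'_i(t)$, and compares components: since the right-hand side has all coefficients in $\mathbb{Z}_{p^k}$, the $\beta_i$-components must vanish, leaving $g'_0(t)f(t)=t^{P_k(f)}-1$ inside $\mathbb{Z}_{p^k}[t]$. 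You instead invoke uniqueness of Euclidean division by a monic divisor, which holds over any commutative ring: dividing $t^{P_k(f)}-1$ by $f(t)$ in $\mathbb{Z}_{p^k}[t]$ and comparing with the exact factorization in $\mathbb{R}_k[t]$ forces the remainder to vanish. Your argument is cleaner and rests on less: it needs only that $\mathbb{Z}_{p^k}$ embeds in $\mathbb{R}_k$ and that $f$ is monic (which the companion-matrix construction and all subsequent lemmas in the paper guarantee), whereas the paper's argument implicitly needs $\mathbb{R}_k$ to split as a direct sum $\mathbb{Z}_{p^k}\oplus\bigoplus_i\beta_i\mathbb{Z}_{p^k}$ of $\mathbb{Z}_{p^k}$-modules --- a property of Galois-ring-type extensions that the phrase ``algebraically independent'' gestures at but does not cleanly establish. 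Conversely, the paper's module-theoretic argument does not formally require monicity of $f$, so it would survive in a setting where only the constant term (rather than the leading coefficient) is normalized; your division argument would then need the leading coefficient to be a unit, a point you correctly flag as the only delicate step.
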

\begin{proof}
First, by definition of $P'_k(f)$, there exists a polynomial $g(t)\in\mathbb{Z}_{p^k}[t]$ such that 
\begin{equation}\label{eq:3A:gftP}
    g(t)f(t)= t^{P'_k(f)}-1
\end{equation}
in $\mathbb{Z}_{p^k}[t]$. Since $\mathbb{R}_k$
is an extension ring of $\mathbb{Z}_{p^k}$, there is a map $\varphi:\mathbb{Z}_{p^k}[t]\rightarrow\mathbb{R}_k[t]$ that maps each polynomial to itself with coefficients reinterpreted in $\mathbb{R}_k[t]$. Since $\varphi$ is a ring homomorphism, it preserves polynomial identities. Thus, Eq.~\eqref{eq:3A:gftP} holds in $\mathbb{R}_k[t]$. By definition of $P_k(f)$, one has
\begin{equation}\label{eq:3A:P'=P}
    P_k(f)\mid P'_k(f).
\end{equation}
Next, by definition of $P_k(f)$, there is $g'(t)\in\mathbb{R}_{k}[t]$ such that \begin{equation}\label{eq:3A:g'f}
    g'(t)f(t) = t^{P_k(f)} - 1.
\end{equation}
in $\mathbb{R}_{k}[t]$. By the construction of $\mathbb{R}_k$, there exists a minimal generating set $\{1, \beta_1, \beta_2, \cdots, \beta_s\}$, where the set $\{\beta_i\}_{i=1}^s$ is algebraically independent over $\mathbb{Z}_{p^k}$. 
Then, write
\begin{equation*}
    g'(t) = g'_0(t)+\sum_{i=1}^s \beta_i g'_i(t),
\end{equation*}
where $g'_i(t)\in\Zp{k}[t]$. 
Substituting this expression into \eqref{eq:3A:g'f} yields
\begin{equation*}
    g'_0(t)f(t) + \sum_{i=1}^s \beta_i g'_i(t)f(t) = t^{P_k(f)} - 1.
\end{equation*}
Let the left-hand side of the above equation 
can be expressed as 
$\sum_{i=0}b_it^i$,
where $b_i=c_{i,0} +\sum_{j=1}^s c_{i,j}\beta_j$ and $c_{i,0}\in \mathbb{Z}_{p^k}$.
Then $b_0=c_{0,0}=1, b_{P_k(f)}=c_{P_k(f),0}=1$ and $b_i=c_{i,0}=0$ for any $i\not\in\{0, P_k(f)\}$.
Assume $\beta_{j_1}g'_{j_1}(t)f(t)\neq 0$. There is $c_{i_1,j_1}\neq 0$.
Since $\beta_i$ cannot be expressed as a linear combination of other generators, 
there is $b_{i_1}-c_{i_1,0}\neq 0$, which leads to a contradiction.
So $g'_0(t)f(t)=t^{P_k(f)}-1$ in $\mathbb{Z}_{p^k}[t]$, namely $P'_k(f)\mid P_k(f)$. Combining this with relation~\eqref{eq:3A:P'=P}, this lemma holds.
\end{proof}

\begin{lemma}\label{le:3A:P_kfp}
If the monic polynomial $f(t)$ has no root of value zero in $\mathbb{R}_1$, then one has
  \begin{equation}\label{eq:f:pef-nomul}
      P_k(f)=
      \begin{cases}
            P_1(f)                & \mbox{if } k<k_{\rm s}; \\
            p^{k-k_{\rm s}}P_1(f) & \mbox{if } k\ge k_{\rm s}, 
      \end{cases}
  \end{equation}
where
\begin{equation}\label{eq:3A:f_hatkf_define}
    k_{\rm s}=\max\{k\mid P_k(f)=P_1(f)\}.
\end{equation}
\end{lemma}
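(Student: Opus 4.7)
The plan is to pass to the splitting ring $\mathbb{R}_k$ and convert the statement about $P_k(f)$ into a statement about the multiplicative orders of the roots of $f$. By construction, $\mathbb{R}_k$ is the minimal extension of $\mathbb{Z}_{p^k}$ over which $f(t)=\prod_{i=1}^n(t-\alpha_i)$ splits into distinct linear factors, and the hypothesis that $f$ has no zero root forces every $\alpha_i$ to be a unit of $\mathbb{R}_k$. Since the linear factors are pairwise coprime, $f(t)\mid t^l-1$ in $\mathbb{R}_k[t]$ holds if and only if $\alpha_i^l=1$ for every $i$; writing $N_k(\alpha_i)$ for the multiplicative order of $\alpha_i$ in $\mathbb{R}_k$, this yields $P_k(f)=\lcm_i N_k(\alpha_i)$ and in particular $P_1(f)=\lcm_i d_i$ with $d_i:=N_1(\bar\alpha_i)$. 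The problem is thereby reduced to tracking how $N_k(\alpha_i)$ grows with $k$.

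Since $\alpha_i^{d_i}\equiv 1\pmod p$, I would write $\alpha_i^{d_i}=1+p^{r_i}\mu_i$ with $\mu_i$ a unit of $\mathbb{R}_k$ (adopting the convention $r_i=\infty$ when $\alpha_i^{d_i}=1$); here $r_i$ is an invariant of the coherent lift of $\bar\alpha_i$ and does not depend on $k$. The core claim is
\[
N_k(\alpha_i)=d_i\cdot p^{\max(k-r_i,\,0)},
\]
which I would prove by expanding $(1+p^{r_i}\mu_i)^{p^j}$ via the binomial theorem and verifying $\bnu\bigl((1+p^{r_i}\mu_i)^{p^j}-1\bigr)=r_i+j$ whenever $r_i+j\le k$, so the order of $\alpha_i^{d_i}$ in the subgroup $1+p\mathbb{R}_k$ is exactly $p^{k-r_i}$ for $k>r_i$ and $1$ otherwise. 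Taking the least common multiple over $i$ with $r:=\min_i r_i$ then gives $P_k(f)=P_1(f)\cdot p^{\max(k-r,\,0)}$; comparing with the definition $k_{\rm s}=\max\{k\vvert P_k(f)=P_1(f)\}$ identifies $k_{\rm s}=r$, and both cases of \eqref{eq:f:pef-nomul} follow at once.

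The main obstacle is the valuation estimate that pins down the order of $1+p^{r_i}\mu_i$ in $1+p\mathbb{R}_k$. For odd $p$, the classical identity $\bnu\bigl(\binom{p^j}{m}\bigr)=j-\bnu(m)$ combined with $r_i\ge 1$ yields $\bnu\bigl(\binom{p^j}{m}p^{r_i m}\mu_i^m\bigr)=j-\bnu(m)+r_i m>r_i+j$ for every $m\ge 2$, isolating the leading term $p^{r_i+j}\mu_i$ exactly as required. The case $p=2$, $r_i=1$ is delicate because $(1+2\mu)^2=1+4\mu(1+\mu)$ jumps the $p$-adic valuation by two whenever $\mu$ is a unit (so $1+\mu$ is necessarily even); handling this cleanly forces either a refinement of the initial choice of $r_i$ after one preliminary squaring or a separate subcase for $p=2$, so that once the stabilized valuation is $\ge 2$ the same binomial bound applies.
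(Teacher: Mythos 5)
Your reduction of $P_k(f)$ to the multiplicative orders of the roots silently replaces the lemma's hypothesis ($f$ has no \emph{zero} root in $\mathbb{R}_1$) with a much stronger one: that $f$ splits into \emph{pairwise coprime} linear factors. Two monic linear factors $t-\alpha$ and $t-\beta$ are coprime in $\mathbb{R}_k[t]$ only when $\alpha-\beta$ is a unit, i.e.\ when the roots remain distinct modulo $p$; if $f$ has a repeated root in $\mathbb{R}_1$, the Chinese Remainder Theorem is unavailable and the equivalence ``$f\mid t^l-1$ iff $\alpha_i^l=1$ for all $i$'' is simply false. Concretely, for $f(t)=(t-1)^2$ your formula $P_k(f)=\lcm_i N_k(\alpha_i)$ yields $P_k(f)=1$ for every $k$, whereas in fact $P_1(f)=p$ and $P_k(f)=p^{k}$ (so the lemma holds with $k_{\rm s}=1$). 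Likewise $f(t)=(t-1)(t-1-p)$ has distinct roots in $\mathbb{R}_2$, both satisfying $\alpha^{p}=1$ in $\mathbb{Z}_{p^2}$, yet $f\nmid t^{p}-1$ there, because the two factors are not coprime. This is not a peripheral case: the paper invokes this very lemma for polynomials of the form $\hat f=(t-\alpha)^a$ in the proofs of Lemmas~\ref{le:3A:gmpkfm<p} and~\ref{le:3A:gmpkfm>p}, so a proof covering only the squarefree reduction cannot support the later results.

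The paper's own proof avoids roots altogether: it runs an induction on the precision using the identity $\bigl(\sum_{i=0}^{p-1}t^{iT}-q(t)\bigr)\bigl(t^{T}-1+p^{j}v(t)\bigr)=t^{pT}-1+p^{j+1}v(t)$, a lifting-the-exponent step performed on the polynomial $t^{T}-1$ itself, which is insensitive to multiplicities. Within the distinct-root setting your argument is sound for odd $p$, but then it is essentially the content of the paper's Lemma~\ref{le:3:hensem3}, which is stated separately under exactly that hypothesis. Your caveat about $p=2$ is well taken — the lemma as stated actually fails there (e.g.\ $f(t)=t-3$ over $\mathbb{Z}_{2^k}$ gives $P_k(f)=2^{k-2}$ for $k\ge 3$, not $2^{k-1}$), so odd $p$ is implicitly assumed throughout the paper — but repairing the $p=2$ subcase would not close the repeated-root gap, which is the decisive defect of the proposal.
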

\begin{proof}
If $k\leq k_{\rm s}$, \eqref{eq:f:pef-nomul} holds by~\eqref{eq:3A:f_hatkf_define}. 
If $k\geq k_{\rm s}$, one can prove 
\begin{equation}\label{eq:3A:ftmnom}
\left\{
\begin{split}
    P_{k_{\rm s}+i}(f)   & = p^iP_1(f);\\
    P_{k_{\rm s}+i+1}(f) &\neq p^iP_1(f)    
\end{split}
\right.
\end{equation}
by mathematical induction on $i$. 
When $i=0$, it follows from the definition of $k_{\rm s}$ that \eqref{eq:3A:ftmnom} holds. Assume that \eqref{eq:3A:ftmnom} holds for $i\leq s$. When $i=s+1$, in the ring $\mathbb{R}_{{k_{\rm s}+s+1}}[t]$, there is $v(t)\not\in(p)$ such that $f(t)\nmid v(t)$ but
\begin{equation*}
    f(t) \mid (t^{p^{s}P_{1}(f)}-1+p^{k_{\rm s}+s} \cdot v(t)).
\end{equation*}
Note that $(t^{p^{s}P_{1}(f)}-1)(\sum_{i=0}^{p-1} t^{i \cdot p^{s}P_{1}(f)}) = t^{ p^{s+1}P_{1}(f)}-1$, one can construct a polynomial
\begin{equation}\label{eq:3A:qt}
    q(t) = p^{k_{\rm s}+s} \cdot v(t) r(t)
\end{equation}
where $r(t)=\sum_{i=0}^{p-2}(p-1-i)t^{i\cdot p^{s}P_{1}(f)}$.
It follows that 
\begin{multline}\label{eq:3A:t-1rt}
\left(\sum_{i=0}^{p-1} t^{i \cdot p^{s}P_{1}(f)} - q(t)\right)  \left(t^{p^{s}P_{1}(f)} - 1 + p^{k_{\rm s}+s} \cdot v(t)\right) \\ 
= t^{p^{s+1}P_{1}(f)} - 1+ p^{k_{\rm s}+s+1} \cdot v(t).
\end{multline}
Hence $f(t)\mid t^{p^{s+1}P_{1}(f)} - 1$ in $\mathbb{R}_{{k_{\rm s}+s+1}}[t]$, namely 
$P_{k_{\rm s}+s+1}=p^{s+1}P_{1}(f)$.
In $\mathbb{R}_{{k_{\rm s}+s+2}}[t]$, it follows that $v'(t)\not\in(p)$ such that $f(t)\nmid v(t)$ and
\begin{equation*}
    f(t) \mid t^{p^{s}P_{1}(f)}-1+p^{k_{\rm s}+s} \cdot v'(t).
\end{equation*}
Similarly to~\eqref{eq:3A:t-1rt}, there is $f(t)\mid t^{p^{s+1}P_{1}(f)} - 1+ p^{k_{\rm s}+s+1} \cdot v'(t)$. Since $f(t)\nmid v(t)$, one obtain $f(t)\nmid t^{p^{s+1}P_{1}(f)}-1$.
So \eqref{eq:3A:ftmnom} holds for $i=s+1$.
This completes the proof of \eqref{eq:3A:ftmnom}, which means that 
\begin{equation*}
\left\{
    \begin{split}
        P_{k}(f) & \nmid p^{k-k_{\rm s}-1} P_{1}(f); \\
        P_{k}(f) & \mid  p^{k-k_{\rm s}}P_{1}(f).
    \end{split}
    \right.
\end{equation*}
Thus, this lemma holds.
\end{proof}

A monic polynomial $f(t)$ can be represented as
\[
f(t)=(t-\alpha_{1, k})(t-\alpha_{2, k})\cdots(t-\alpha_{m, k})
\]
in $\mathbb{R}_{{k}}[t]$.
Then these roots should reduce modulo $p$ to the roots $\{\alpha_i\}_{i=1}^m$ in $\mathbb{R}_{{1}}[t]$, namely 
\begin{equation}\label{eq:3A:alpha-}
    \alpha_{i, k}-\alpha_i\in(p). 
\end{equation}
Based on the analysis of root lifting between the rings $\mathbb{R}_k$, the specific value $k_{\rm s}$ can be explicitly determined. When the polynomial $f(t)$ has no non-zero multiple roots in $\mathbb{R}_1$, Lemma~\ref{le:3:hensem3} provides the exact formulation for $k_{\rm s}$.

\begin{lemma}\label{le:3:hensem3}  
If the monic polynomial $f(t)$ has exactly $m$ distinct non-zero roots 
in $\mathbb{R}_1$, (\ref{eq:f:pef-nomul}) holds
with $k_{\rm s}=\min\limits_{1\leq i\leq m}f_{\rm k}(\alpha_i)$, 
where $\alpha_i$ is the $i$-th root, 
\begin{equation}\label{eq:3:ealphaj}
f_{\rm k}(\alpha_i)=\max\left\{ k \vvert \ord_{k}\left(g^{(k-1)}(\alpha_i)\right)=\ord_1(\alpha_i) \right\}, 
\end{equation}
and
$g^{(n)}(x)$ indicates iterating 
\begin{equation}\label{eq:2:alphai+1}
g(x)=x-f'(x)^{-1}\cdot f(x)
\end{equation}
$n$ times.
\end{lemma}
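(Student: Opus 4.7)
The plan is to reduce $P_k(f)$ to the multiplicative orders of the individual roots of $f(t)$ in $\mathbb{R}_k$, then use the Teichm\"uller decomposition of $\mathbb{R}_k^{*}$ to pinpoint when those orders already appear at level $k=1$, and finally match the Newton iterate $g^{(k-1)}(\alpha_i)$ with the Hensel lift $\alpha_{i,k}$ so that definition~\eqref{eq:3:ealphaj} governs the transition. Since each $\alpha_i$ is a simple, non-zero root of $f$ modulo $p$, the derivative $f'(\alpha_i)$ is a unit in $\mathbb{R}_1$, so Hensel's lemma yields a unique lift $\alpha_{i,k}\in\mathbb{R}_k$ with $\alpha_{i,k}\equiv\alpha_i\pmod p$, and distinct $\alpha_i$'s remain distinct after lifting. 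Thus $f(t)=\prod_{i=1}^m (t-\alpha_{i,k})$ in $\mathbb{R}_k[t]$, and $f(t)\mid t^{\ell}-1$ is equivalent to $\alpha_{i,k}^{\ell}=1$ for every $i$, giving $P_k(f)=\lcm_{1\le i\le m}\ord_k(\alpha_{i,k})$.

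Because each $\alpha_i$ is a unit, $\ord_1(\alpha_i)$ is coprime to $p$, and the unit group of the Galois ring $\mathbb{R}_k$ splits as the direct product of a subgroup of order coprime to $p$ (the Teichm\"uller part) and the principal units $1+p\mathbb{R}_k$ (a $p$-group). This forces $\ord_k(\alpha_{i,k})=\ord_1(\alpha_i)\cdot p^{e_i(k)}$ with $e_i(k)\ge 0$ non-decreasing in $k$, whence $P_k(f)=P_1(f)\cdot p^{\max_i e_i(k)}$ and $P_k(f)=P_1(f)$ iff $e_i(k)=0$ for every $i$. To translate~\eqref{eq:3:ealphaj} into this language, I would fix any lift of $\alpha_i$ to $\mathbb{R}_k$ and use the Taylor expansion of $f$ about $\alpha_{i,k}$ to derive the quadratic Newton estimate $g(y)-\alpha_{i,k}=O\bigl((y-\alpha_{i,k})^2\bigr)$ in the $p$-adic valuation. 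Iterating $k-1$ times from $\alpha_i$ then yields $g^{(k-1)}(\alpha_i)\equiv\alpha_{i,k}\pmod{p^{2^{k-1}}}$, hence $g^{(k-1)}(\alpha_i)=\alpha_{i,k}$ inside $\mathbb{R}_k$ for every $k\ge 1$. Consequently $\ord_k(g^{(k-1)}(\alpha_i))=\ord_k(\alpha_{i,k})$, and~\eqref{eq:3:ealphaj} collapses to $f_{\rm k}(\alpha_i)=\max\{k\vvert e_i(k)=0\}$.

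Putting the pieces together, $k_{\rm s}=\max\{k\vvert P_k(f)=P_1(f)\}=\max\{k\vvert \forall i,\ e_i(k)=0\}=\min_{1\le i\le m} f_{\rm k}(\alpha_i)$, after which Lemma~\ref{le:3A:P_kfp} immediately supplies~\eqref{eq:f:pef-nomul}. The main obstacle I foresee is making the identification $g^{(k-1)}(\alpha_i)=\alpha_{i,k}$ fully rigorous in the Galois-ring setting: strictly, one must first choose a lift of $\alpha_i\in\mathbb{R}_1$ to an element of $\mathbb{R}_k$ before $g^{(k-1)}(\alpha_i)$ is a bona fide element of $\mathbb{R}_k$, and then verify that a different lift would yield the same terminal value modulo $p^k$. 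Both facts follow from the uniqueness clause of Hensel's lemma combined with the quadratic-convergence estimate above, but they are where I would spend the most care to justify substituting $g^{(k-1)}(\alpha_i)$ for $\alpha_{i,k}$ in the order computation.
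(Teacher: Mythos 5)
Your proof is correct and follows essentially the same route as the paper's: factor $f$ into pairwise-coprime linear factors over $\mathbb{R}_k$ via Hensel lifting, express $P_k(f)$ as the lcm of the orders of the lifted roots, identify the Newton iterate $g^{(k-1)}(\alpha_i)$ with the lift $\alpha_{i,k}$, and obtain $k_{\rm s}$ as the minimum of the per-root thresholds before invoking Lemma~\ref{le:3A:P_kfp}. Your Teichm\"uller-decomposition justification of the $p$-power order growth (where the paper instead relies on Lemma~\ref{le:3A:P_kfp} applied factor-by-factor) and your explicit quadratic-convergence bound modulo $p^{2^{k-1}}$ for the identification $g^{(k-1)}(\alpha_i)=\alpha_{i,k}$ are, if anything, more carefully justified than the corresponding steps in the paper's own proof.
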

\begin{proof}
Since $f(t)$ has no non-zero multiple root in $\mathbb{R}_1$ and~\eqref{eq:3A:alpha-}, the same holds in $\mathbb{R}_k$ for any $k$. 
Then, $f(t)=\prod\limits_{i=1}^m f_i$, where 
each linear factor $f_i=t-\alpha_{i,k}$ is pairwise coprime.
Let 
\begin{equation}\label{eq:3A:l}
l=\lcm(P_k(f_1), P_k(f_2), \cdots, P_k(f_m)), 
\end{equation}
then $f_i\mid t^{P_k(f_i)}-1 \mid t^l-1$. From $f_i$ being pairwise coprime, one has $f\mid t^l-1$ by the Chinese Remainder Theorem, namely $P_k(f)\mid l$. Otherwise, since $P_k(f_i) \mid P_k(f)$ for any $i$, one has $l\mid P_k(f)$. It follows from \eqref{eq:3A:l} that 
\begin{equation}\label{eq:3:P_efnomul}
P_k(f)=\lcm(P_k(f_1), P_k(f_2), \cdots, P_k(f_m)).
\end{equation}

In $\mathbb{R}_k$, since $\mathbb{R}_k/(p)\cong\mathbb{R}_1$, one has $\alpha_i\in\mathbb{R}_k$ and $f(\alpha_i)\in(p)$. As $f(t)$ has no multiple root in $\mathbb{R}_1$, one has 
\begin{equation}\label{eq:3A:f'}
    f'(\alpha_i)\not\in(p). 
\end{equation}
Let $\alpha^{(n)}_i=g^{(n)}(\alpha_i)$. It follows from \eqref{eq:2:alphai+1} that $\alpha^{(n)}_i-\alpha_i\in(p)$.  
Then $f'(\alpha^{(n)}_i)-f'(\alpha_i)\in(p)$ by Taylor's formula. Referring to \eqref{eq:3A:f'}, one has $f'(\alpha^{(n)}_i)\not\in(p)$.
Then, from \eqref{eq:2:alphai+1}, one has 
\begin{multline*}
f(\alpha^{(n+1)}_i)=f(\alpha^{(n)}_i)+f'(\alpha^{(n)}_i)(-f'(\alpha^{(n)}_i)^{-1}f(\alpha^{(n)}_i)+ \\ \sum^m_{j=2}f^{(j)}(\alpha^{(n)}_i)(-f'(\alpha^{(n)}_i)^{-1}f(\alpha^{(n)}_i))^j
\end{multline*}
by Taylor's formula.
It follows that $f_i=t-\alpha^{(n+1)}_i$ is a factor of $f(t)$ in $\mathbb{R}_n$, then 
\begin{equation}\label{eq:ord=pe}
\ord_k(g^{(k)}(\alpha_i))=P_k(f_i).
\end{equation}
Combining this equation with \eqref{eq:3:ealphaj}, one has 
\begin{equation*}
\left\{
    \begin{split}
        P_{f_{\rm k}(\alpha_i)}(f_i)   &=P_1(f_i);\\
        P_{f_{\rm k}(\alpha_i)+1}(f_i) &\neq P_1(f_i).
    \end{split}
    \right.
\end{equation*}
Referring to \eqref{eq:3A:l}, there is $f_{\rm k}(\alpha_{i_1})=\min\limits_{1\leq i\leq m} f_{\rm k}(\alpha_{i_1})$ such that 
\begin{equation*}
\left\{
    \begin{split}
        P_{f_{\rm k}(\alpha_{i_1})}(f)   &=P_1(f);\\
        P_{f_{\rm k}(\alpha_{i_1})+1}(f) &\neq P_1(f).
    \end{split}
    \right.
\end{equation*}
According to \eqref{eq:3A:f_hatkf_define}, this lemma holds.
\end{proof}

For $f(t)$ with a multiple root $\alpha$ over $\mathbb{R}_1$, one has $\hat{f}_k\equiv g^a \pmod{p}$ in $\mathbb{R}_k[t]$, 
where $\hat{f}_k=(t-\alpha_{1, k})(t-\alpha_{2, k})\cdots(t-\alpha_{a, k})$.
Lemma~\ref{le:3A:R1pf'} determines the least period of $f(t)$ in $\mathbb{R}_{1}$.
Lemma~\ref{le:3A:gmpkfm<p} describes the regulation of the change of least period of $f(t)$ in $\mathbb{R}_{k}$ as $k$ increases when $m\leq p$. Lemma~\ref{le:3A:gmpkfm>p} addresses the scenario for $m>p$. The combined insights from Lemmas~\ref{le:3:hensem3},~\ref{le:3A:gmpkfm<p} and~\ref{le:3A:gmpkfm>p} are further distilled in Theorem~\ref{coro:2:e>1b=1}, which governs the regulation of the change of least period of $f(t)$ in $\mathbb{R}_{k}$ as $k$ increases.

\begin{lemma}\label{le:3A:R1pf'}
If the monic polynomial $f(t)$ has a nonzero root $\alpha$ of multiplicity $a$ in $\mathbb{R}_1$, then one has
    \begin{equation}\label{eq:peh:g^s}
       P_1((t-\alpha)^a)=p^c \cdot P_1(t-\alpha), 
    \end{equation}
where $c=\min\{k\ \vert \ a \leq p^k\}$.
\end{lemma}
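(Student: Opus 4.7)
The plan is to reduce the divisibility question to an explicit factorization of $t^l-1$ in $\mathbb{R}_1[t]$ and then read off both the multiplicity constraint and the cyclic-order constraint. Since $\mathbb{R}_1$ has characteristic $p$, first I would write any candidate exponent as $l=p^s\cdot l'$ with $\gcd(l',p)=1$, and invoke the Frobenius identity in characteristic $p$ to obtain
\begin{equation*}
t^l-1 = (t^{l'})^{p^s}-1 = (t^{l'}-1)^{p^s}.
\end{equation*}
Because $\gcd(l',p)=1$, the derivative of $t^{l'}-1$ is $l't^{l'-1}$, which shares no nonzero root with $t^{l'}-1$, so $t^{l'}-1$ is separable away from $0$. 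Consequently, the multiplicity of the nonzero element $\alpha$ as a root of $t^l-1$ equals $p^s$ if $\alpha^{l'}=1$, and is $0$ otherwise.

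Next I would translate the required divisibility $(t-\alpha)^a \mid t^l-1$ into the two independent conditions $p^s \ge a$ and $\ord_1(\alpha)\mid l'$. Since $l=p^s\cdot l'$, minimizing $l$ amounts to minimizing each factor independently: the smallest admissible $p^s$ is $p^c$ with $c=\min\{k \mid p^k\ge a\}$, and the smallest admissible $l'$ is $\ord_1(\alpha)$, which by definition equals $P_1(t-\alpha)$. Combining these gives
\begin{equation*}
P_1((t-\alpha)^a) \;=\; p^c\cdot P_1(t-\alpha),
\end{equation*}
as claimed.

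The only step that needs care is justifying that the two minimizations can be carried out independently, i.e.\ that there is no smaller product $p^s l'$ satisfying both constraints simultaneously. This follows cleanly from the fact that the constraints $p^s\ge a$ and $\ord_1(\alpha)\mid l'$ act on the $p$-part and the $p'$-part of $l$ separately, so any admissible $l$ is a multiple of $p^c\cdot P_1(t-\alpha)$. This is the main — though rather mild — obstacle; everything else is a direct consequence of the Frobenius factorization and separability of $t^{l'}-1$ in characteristic $p$.
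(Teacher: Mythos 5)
Your proof is correct and takes essentially the same route as the paper's: both rest on the Frobenius identity $t^{p^s l'}-1=(t^{l'}-1)^{p^s}$ in characteristic $p$ and the separability of $t^{l'}-1$ when $\gcd(l',p)=1$ to pin down the multiplicity of $\alpha$ as a root of $t^l-1$. If anything, your version is slightly more complete: by decomposing an arbitrary exponent $l$ into its $p$-part and $p'$-part you make explicit the final minimality step (that every admissible $l$ is a multiple of $p^c\cdot P_1(t-\alpha)$), whereas the paper only tests the two exponents $p^c P_1(t-\alpha)$ and $p^{c-1}P_1(t-\alpha)$ and leaves the passage from ``divides the former but not the latter'' to exact equality implicit.
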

\begin{proof}
In $\mathbb{R}_1[t]$, $(t-\alpha) \mid t^{P_1(t-\alpha)}-1$. 
Since $t^{P_1(t-\alpha)}-1$ is coprime with its derivative, it has only simple roots. Consequently, $(t-\alpha)^a\mid (t^{P_1(t-\alpha)}-1)^{p^c}$ but $(t-\alpha)^a\nmid (t^{P_1(t-\alpha)}-1)^{p^{c-1}}$. It follows that $P_1((t-\alpha)^a)\mid p^c\cdot P_1(t-\alpha)$ but $P_1((t-\alpha)^a)\nmid p^{c-1}\cdot P_1(t-\alpha)$. 
Thus, this lemma holds.
\end{proof}

\begin{lemma}\label{le:3A:gmpkfm<p}
If the monic polynomial $f(t)$ has a nonzero root $\alpha$ of multiplicity $a\leq p$ in $\mathbb{R}_k$, then one has
    \begin{equation}\label{eq:peh:g^s}
       P_k((t-\alpha)^a)=p^{k}\cdot P_1(t-\alpha).
    \end{equation}
\end{lemma}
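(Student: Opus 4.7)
The approach is to convert divisibility by $(t-\alpha)^a$ in $\mathbb{R}_k[t]$ into a Taylor-style system of vanishing-derivative conditions at $\alpha$---a reduction made available by the hypothesis $a\le p$, which renders the relevant factorials units in $\mathbb{Z}_{p^k}$---and then to read off the minimal exponent $n$ for which $t^n-1$ satisfies the resulting conditions. First I would note that $\alpha$ is automatically a unit of $\mathbb{R}_k$: evaluating any relation $(t-\alpha)^a\mid t^n-1$ at $\alpha$ yields $\alpha^n=1$, so $\alpha\notin(p)$. Consequently the reduction $\overline{\alpha}\in\mathbb{R}_1^*$ has multiplicative order $P_1(t-\alpha)$, which divides $|\mathbb{R}_1^*|=p^m-1$ (where $p^m=|\mathbb{R}_1|$) and is therefore coprime to $p$.

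Next, I would establish the Taylor criterion. Since $j!$ is a unit of $\mathbb{Z}_{p^k}$ for every $j<p$, any $h(t)\in\mathbb{R}_k[t]$ admits the unique expansion $h(t)=\sum_{j\ge0}\frac{h^{(j)}(\alpha)}{j!}(t-\alpha)^j$, and reading off the first $a$ coefficients shows that $(t-\alpha)^a\mid h(t)$ in $\mathbb{R}_k[t]$ iff $h^{(j)}(\alpha)=0$ in $\mathbb{R}_k$ for $j=0,1,\ldots,a-1$. Applied to $h(t)=t^n-1$, this produces $\alpha^n=1$ together with $n(n-1)\cdots(n-j+1)\alpha^{n-j}=0$ for $1\le j\le a-1$; because $\alpha$ is a unit, these collapse to the single strongest condition $p^k\mid n$ (arising from $j=1$), as every higher-$j$ product still contains $n$ as a factor.

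To finish, the condition $\alpha^n=1$ in $\mathbb{R}_k$ is equivalent to $\ord_k(\alpha)\mid n$, and Lemma~\ref{le:3:hensem3} applied to the linear factor $t-\alpha$ (whose Newton iteration fixes $\alpha$) yields $\ord_k(\alpha)=P_1(t-\alpha)$. Combined with $p^k\mid n$ and the coprimality of $p^k$ with $P_1(t-\alpha)$, the least $n$ satisfying both constraints is $p^k\cdot P_1(t-\alpha)$, proving the claim.

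The hard part will be making the Taylor-expansion criterion airtight over the non-field ring $\mathbb{R}_k$: the hypothesis $a\le p$ is exactly what keeps $(a-1)!$ invertible modulo $p^k$, which is both sufficient and sharp. Once this criterion is in place, the remaining coprimality and order-lifting steps are routine, while the regime $a>p$---where the derivative test genuinely fails---must be handled separately, as will be done in Lemma~\ref{le:3A:gmpkfm>p}.
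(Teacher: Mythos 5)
Your core reduction is valid and genuinely different from the paper's argument. The paper proves this lemma indirectly: it first obtains $P_1((t-\alpha)^a)=p\cdot P_1(t-\alpha)$ from Lemma~\ref{le:3A:R1pf'}, uses Lemma~\ref{le:3A:P_kfp} to bound $P_k$ above by $p^{k}\cdot P_1(t-\alpha)$, and then rules out anything smaller by an explicit division-algorithm computation in $\mathbb{R}_2$ followed by the threshold mechanism of Lemma~\ref{le:3A:P_kfp}. Your Taylor-coefficient criterion instead pins down the exact divisibility conditions at every level $k$ in one stroke: for $2\le a\le p$, $(t-\alpha)^a\mid t^n-1$ in $\mathbb{R}_k[t]$ if and only if $\alpha^n=1$ and $p^k\mid n$ (the $j=1$ derivative condition, with all higher $j$ implied by it). This is cleaner and more informative than the paper's route; the only technical slip in this part is that the expansion $h(t)=\sum_j \frac{h^{(j)}(\alpha)}{j!}(t-\alpha)^j$ should be phrased via the always-existing expansion $h(t)=\sum_j c_j(t-\alpha)^j$ with $h^{(j)}(\alpha)=j!\,c_j$, since $j!$ is not invertible for $j\ge p$; this does not affect the criterion for $j\le a-1<p$. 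One caveat you share with the paper: the collapse to $p^k\mid n$ requires the $j=1$ condition to exist, i.e.\ $a\ge 2$; for $a=1$ the stated formula is simply false (e.g.\ $P_k(t-1)=1$), so the lemma must be read with $a\ge 2$.

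However, your final step contains a genuine error. You claim $\ord_k(\alpha)=P_1(t-\alpha)$, citing Lemma~\ref{le:3:hensem3} applied to $t-\alpha$. That identity is false in general: take $p=3$, $k=2$, $\alpha=4\in\mathbb{Z}_9$; then $\bar{\alpha}=1$, so $P_1(t-\alpha)=1$, while $\ord_2(4)=3$. What Lemma~\ref{le:3:hensem3} actually provides is a threshold $k_{\rm s}$ (which can equal $1$, as in this example) beyond which $\ord_k(\alpha)$ grows by a factor of $p$ per level, not that it remains equal to $P_1(t-\alpha)$ for all $k$. Fortunately the damage is local: your argument only needs $\lcm\bigl(p^k,\ord_k(\alpha)\bigr)=p^k\cdot P_1(t-\alpha)$, and this is true because $\alpha^{P_1(t-\alpha)}$ lies in $1+p\mathbb{R}_k$, a group whose exponent divides $p^{k-1}$; hence $\ord_k(\alpha)=p^e\cdot P_1(t-\alpha)$ for some $0\le e\le k-1$, and since $\gcd(P_1(t-\alpha),p)=1$ the least common multiple is exactly $p^k\cdot P_1(t-\alpha)$ (in the example, $\lcm(9,3)=9$, matching $P_2((t-4)^2)=9$). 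With that one substitution your proof is complete and, in my view, tighter than the paper's.
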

\begin{proof}
In $\mathbb{R}_1$, it follows from $a\leq p$ and Lemma~\ref{le:3A:R1pf'} that
$P_1(\hat{f})=p\cdot P_1(t-\alpha)$, where $\hat{f}=(t-\alpha)^a$. Then, referring to Lemma~\ref{le:3A:P_kfp}, one has
\begin{equation}\label{eq:3:P_efmidpe}
    P_k(\hat{f}) \mid p^{k}\cdot P_1(g).
\end{equation}
Next, since $\hat{f}$ is a factor of $f(t)$, there exist distinct indices
$i_1, i_2\in \{1, 2, \cdots, m\}$ such that $\alpha_{i_1, 2}-\alpha\in(p)$ and $\alpha_{i_1, 2}-\alpha\in(p)$.
Consequently, one has 
\begin{equation}\label{eq:3A:alpha_{i_1k}}
    \alpha_{i_1, 2}\not\in(p).
\end{equation}
and
\begin{equation}\label{eq:3A:alphai12}
(t-\alpha_{i_1, 2})(t-\alpha_{i_2, 2})\mid t^{P_2(\hat{f})}-1.
\end{equation}
Since $\alpha_{i_1, 2}^{P_2(\hat{f})}=1$,
\begin{equation}\label{eq:3A:t=alphai1}
t^{P_2(\hat{f})}-1=(t-\alpha_{i_1, 2})\sum^{P_2(\hat{f})}_{i=1}\alpha_{i_1, 2}^{i-1}t^{P_2(\hat{f})-i}.
\end{equation}
Then, it follows from the Division algorithm that 
    \begin{multline}\label{eq:3A:remainer}
        \sum^{P_2(\hat{f})}_{i=1}\alpha_{i_1, 2}^{i-1}t^{P_2(\hat{f})-i}=\\
        (t-\alpha_{i_2, 2})\left(\sum^{P_2(\hat{f})-2}_{i=0}d_i\cdot t^{P_2(\hat{f})-2-i}\right)+d_{P_2(\hat{f})-1}, 
    \end{multline}
where
\begin{equation}\label{eq:3A:ds}
d_s=\alpha_{i_2, 2}^s+\alpha_{i_2, 2}^{s-1}\alpha_{i_1, 2}+\cdots+\alpha_{i_1, 2}^s.
\end{equation}
Since $d_s$ is the constant term, one has $d_{P_2(\hat{f})-1}\nmid t-\alpha_{i_2, 2}$.
Combining \eqref{eq:3A:alphai12}, \eqref{eq:3A:t=alphai1}, and~\eqref{eq:3A:remainer}, one can obtain 
\begin{equation}\label{eq:3:dpef-1}
d_{P_2(\hat{f})-1}\in (p^2).
\end{equation}
Thus, from
\begin{equation*}
d_{P_2(\hat{f})-1}(\alpha_{i_2, 2}-\alpha_{i_1, 2})=\alpha_{i_2, 2}^{P_2(\hat{f})}-\alpha_{i_1, 2}^{P_2(\hat{f})}, 
\end{equation*}
 the analysis of $f_{i_1}$, $f_{i_2}$ can be divided into two cases by the relation between $\alpha_{i_2, 2}$ and $\alpha_{i_1, 2}$:
\begin{itemize}
    \item $\alpha_{i_2, 2}=\alpha_{i_1, 2}$: it follows from \eqref{eq:3A:ds} that $d_{P_2(\hat{f})-1}=P_2(\hat{f})\alpha_{i_2, 2}^{P_2(\hat{f})-1}$. Since $t\nmid g$, one has $\alpha\not\in(p)$, then $\alpha_{i_2, 2}^{P_2(\hat{f})-1}\not\in(p)$. Referring to \eqref{eq:3:dpef-1}, one can obtain $P_k(f_{i_1}f_{i_2})\in (p^2)$.
    
    \item $\alpha_{i_2, 2}\neq \alpha_{i_1, 2}$: one can derive
    \begin{equation*}
    \begin{split}
        d_{P_2(\hat{f})-1}&=\frac{\alpha_{i_2, 2}^{P_2(\hat{f})}-\alpha_{i_1, 2}^{P_2(\hat{f})}}{\alpha_{i_2, 2}-\alpha_{i_1, 2}}\\
                    &=\frac{(\alpha_{i_2, 2}-\alpha_{i_1, 2}+\alpha_{i_1, 2})^{P_2(\hat{f})}-\alpha_{i_1, 2}^{P_2(\hat{f})}}{\alpha_{i_2, 2}-\alpha_{i_1, 2}}\\
                    &=\sum_{i=1}^{P_2(\hat{f})} \binom{i}{P_2(\hat{f})}\alpha_{i_1, 2}^{P_2(\hat{f})-i}(\alpha_{i_2, 2}-\alpha_{i_1, 2})^{i-1}, 
    \end{split}
    \end{equation*}
    Combining the conditions $\alpha_{i_2, 2}-\alpha_{i_1, 2}\in (p)$ and $\alpha_{i_1, 2}\not\in(p)$ with \eqref{eq:3:dpef-1}, one obtains $P_2(\hat{f})\in (p)$ and
    \begin{equation*}
        \sum_{i=1}^{2} \binom{i}{P_2(\hat{f})}\alpha_{i_1, 2}^{P_2(\hat{f})-i}(\alpha_{i_2, 2}-\alpha_{i_1, 2})^{i-1}\in(p^2).
    \end{equation*}
    This expression can be simplified to $P_k(f)(\alpha_{i_1, 2}-(P_k(f)-1)(\alpha_{i_2, 2}-\alpha_{i_1, 2}))\in(p^2)$. Since $\alpha_{i_1, 2}-\alpha\in(p)$, one has $\alpha_{i_2, 2}-\alpha_{i_1, 2}\in(p)$. Combining this with \eqref{eq:3A:alpha_{i_1k}} and $P_k(f)\in(p)$, one obtains $P_2(\hat{f})\in (p^2)$.
\end{itemize}
Now, one can conclude that $P_2(\hat{f})\nmid p\cdot P_1(g)$ over $\mathbb{R}_2$.
Referring to Lemma~\ref{le:3A:P_kfp} and \eqref{eq:3:P_efmidpe}, this lemma holds.
\end{proof}

\begin{lemma}\label{le:3A:gmpkfm>p}
If the monic polynomial $f(t)$ has a nonzero root $\alpha$ of multiplicity $a>p$ in $\mathbb{R}_k$, then one has
    \begin{equation}\label{eq:peh:g^s}
       P_k((t-\alpha)^a)=p^{k+c-1}\cdot P_1(t-\alpha), 
    \end{equation}
    where $c=\min\{k\ \vert \ a \leq p^k\}$.
\end{lemma}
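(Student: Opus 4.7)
My plan is to apply Lemma~\ref{le:3A:P_kfp} to the monic polynomial $\hat{f}(t)=(t-\alpha)^a$, which has no zero root since $\alpha\notin(p)$. Lemma~\ref{le:3A:R1pf'} gives $P_1(\hat{f})=p^{c}\cdot P_1(t-\alpha)$, so Lemma~\ref{le:3A:P_kfp} yields $P_k(\hat{f})=p^{k-k_{\rm s}+c}\cdot P_1(t-\alpha)$ for $k\geq k_{\rm s}$. The target formula matches this expression precisely when $k_{\rm s}=1$, and since $k_{\rm s}\geq 1$ holds trivially by definition, the task reduces to proving $P_2(\hat{f})\neq P_1(\hat{f})$.

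To compare the two orders, I will use the Taylor/binomial expansion
\[
t^l-1 \;=\; (\alpha^l-1)+\sum_{i=1}^{l}\binom{l}{i}\alpha^{l-i}(t-\alpha)^i
\]
in $\mathbb{R}_k[t]$. Because $\alpha$ is a unit, the divisibility $(t-\alpha)^a\mid t^l-1$ is equivalent to the two conditions $\alpha^l=1$ in $\mathbb{R}_k$ and $v_p\binom{l}{i}\geq k$ for every $i\in\{1,\ldots,a-1\}$. Substituting $l=P_1(\hat{f})=p^{c}d$, where $d=P_1(t-\alpha)$ is coprime to $p$, a short $(1+p\gamma)^{p^c}$ computation (valid because $c\geq 2$) shows $\alpha^{l}\equiv 1\pmod{p^2}$ for any lift of the residue of $\alpha$, so the order condition is automatic in $\mathbb{R}_2$ and only the binomial-coefficient condition remains.

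The decisive step is to exhibit an index $i^{*}\in\{1,\ldots,a-1\}$ with $v_p\binom{l}{i^{*}}<2$. I will take $i^{*}=p^{c-1}$; this lies in the required range because $a>p$ forces $c\geq 2$, and by the minimality of $c$ one has $a>p^{c-1}$. By Kummer's theorem, $v_p\binom{p^{c}d}{p^{c-1}}$ equals the number of carries in the base-$p$ addition $p^{c-1}+(p^{c}d-p^{c-1})$, and a short digit inspection — using that $d$ has nonzero least digit — shows there is exactly one carry. Hence $\binom{l}{p^{c-1}}\not\equiv 0\pmod{p^2}$, so $(t-\alpha)^a\nmid t^l-1$ in $\mathbb{R}_2[t]$, which forces $P_2(\hat{f})>P_1(\hat{f})$, gives $k_{\rm s}=1$, and substitution into Lemma~\ref{le:3A:P_kfp} delivers $P_k(\hat{f})=p^{k-1}\cdot p^{c}\cdot P_1(t-\alpha)=p^{k+c-1}\cdot P_1(t-\alpha)$.

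The main obstacle will be the $p$-adic valuation step for the critical binomial coefficient. The root-splitting strategy that drives the proof of Lemma~\ref{le:3A:gmpkfm<p} does not transfer here, since $(t-\alpha)^a$ truly has a single root of high multiplicity in $\mathbb{R}_2$ rather than decomposing into nearly-coincident distinct linear factors that can be played off against one another. Replacing the root-difference analysis with a Kummer carry count is what makes the regime $a>p$ tractable, and correctly pinpointing $i^{*}=p^{c-1}$ as the witnessing index is the arithmetic heart of the argument — more naive choices such as $i=1$ already carry valuation $c\geq 2$ and cannot detect the gap between $P_1(\hat{f})$ and $P_2(\hat{f})$.
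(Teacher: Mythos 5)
Your proof is correct, and it takes a genuinely different route from the paper's. The paper argues by induction on $c$: it splits the reduction of $\hat{f}$ into near-coincident linear factors $\hat{f}_1=(t-\alpha_{k,1})\cdots(t-\alpha_{k,p^{s}})$, builds the auxiliary identity \eqref{eq:3A:cfp-m}, and derives a contradiction from the assumption $P_2(\hat{f})=p^{s+1}P_1(g)$ before invoking Lemma~\ref{le:3A:P_kfp}; the decisive evaluation step there ("yields a simplified result that equals $p$") is left vague. You reduce to the same endpoint — computing the threshold $k_{\rm s}$ in Lemma~\ref{le:3A:P_kfp}, with $P_1(\hat{f})=p^{c}P_1(t-\alpha)$ supplied by Lemma~\ref{le:3A:R1pf'} — but you pin down $k_{\rm s}=1$ by a direct, self-contained computation: the $(t-\alpha)$-adic expansion $t^{l}-1=(\alpha^{l}-1)+\sum_{i\ge 1}\binom{l}{i}\alpha^{l-i}(t-\alpha)^{i}$ converts divisibility in $\mathbb{R}_2[t]$ into valuation conditions on binomial coefficients (legitimate, since $\alpha$ is a unit and $u^{a}\mid g(u)$ in $\mathbb{R}_2[u]$ exactly when the coefficients of $u^{0},\dots,u^{a-1}$ vanish), and the Kummer carry count for $\binom{p^{c}d}{p^{c-1}}$ with $d=P_1(t-\alpha)$ prime to $p$ gives exactly one carry, so the coefficient at $i^{*}=p^{c-1}$ survives modulo $p^{2}$. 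Your range check $p^{c-1}<a\le p^{c}$ is also right, so non-divisibility follows and Lemma~\ref{le:3A:P_kfp} finishes the proof. What your route buys: it is more elementary and rigorous, it avoids the root-lifting machinery entirely (which, as you observe, sits awkwardly against a statement about a genuinely repeated root), and taking $c=1$, $i^{*}=1$ it would recover Lemma~\ref{le:3A:gmpkfm<p} as well, unifying the two multiplicity regimes; the paper's route mainly buys stylistic continuity with its neighboring lemmas. Two cosmetic remarks: the lift computation $\alpha^{p d}\equiv 1\pmod{p^{2}}$ needs only $c\ge 1$, not $c\ge 2$, and that whole order-condition check is dispensable, since exhibiting the single nonvanishing coefficient already rules out divisibility.
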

\begin{proof}
This lemma can be proved by mathematical induction on $c$. When $c=0$, there is $a=1$. Referring to Lemma~\ref{le:3A:gmpkfm<p}, \eqref{eq:peh:g^s} holds. 
Assume that \eqref{eq:peh:g^s} holds for $c\leq s$.
When $c=s+1$, in $\mathbb{R}_1$, referring to Lemma~\ref{le:3A:R1pf'} and $a>p$, one has \begin{equation}\label{eq:3A:ftppk-1} 
    P_1(\hat{f})=p^{s+1}\cdot P_1(g)
\end{equation}
 and 
\begin{equation}\label{eq:3A:p1hatf}
P_1(\hat{f}_1)=p^{s}P_1(g), 
\end{equation}
where $\hat{f}_1=(t-\alpha_{k, 1})(t-\alpha_{k, 2})\cdots (t-\alpha_{k, p^{s}})$. 
In $\mathbb{R}_2$, since \eqref{eq:3A:p1hatf} holds when $c=s$, there is $\hat{v}(t)\not\in(p)$ such that $\hat{f}_1\mid t^{p^{s}\cdot P_1(g)}-1+p\hat{v}(t)$. 

Similar to \eqref{eq:3A:t-1rt}, there is 
\begin{equation}\label{eq:3A:hatr2t}
    \hat{r}(t)=\sum_{i=0}^{p-2}(p-1-i)t^{i\cdot p^{s}P_1(g)}
\end{equation}
such that
\begin{multline}\label{eq:3A:cfp-m}
\left(\sum_{i=0}^{p-1} t^{i \cdot p^{s}P'_{1}(g)} + p\hat{v}(t)\hat{r}(t)\right)\left(t^{p^{s}P_1(g)}-1-p\hat{v}(t)\right)=\\
t^{p^{s+1}P_1(g)} - 1.
\end{multline}
Referring to Lemma~\ref{le:3A:R1pf'}, there is $\alpha_{i, 2}$ for any $i\in\{p^s+1, p^s+2, \cdots, a\}$ such that $(t-\alpha_{i, 2})\hat{f}_1\nmid t^{p^s\cdot P_1(g)}-1$ 
in $\mathbb{R}_1$.
Then, in $\mathbb{R}_2$, one obtains $(t-\alpha_{i, 2})\hat{f}_1\nmid t^{p^s\cdot P_1(g)}-1+\hat{v}(t)$.
Assume $P_2(\hat{f})=p^{s+1}P_1(g)$. In $\mathbb{R}_2$, combining with \eqref{eq:3A:cfp-m}, one can obtain
\[
(t-\alpha_{i, 2})\mid\left(\sum_{i=0}^{p-1} t^{i \cdot p^{s}P'_{1}(g)} -p\hat{v}(t)\hat{r}(t)\right).
\]
This implies
\[
\left(\sum_{i=0}^{p-1} \alpha_{i, 2}^{i \cdot p^{s}P'_{1}(g)} - p\hat{v}(\alpha_{i, 2})\hat{r}(t\alpha_{i, 2})\right)=0
\]
It follows from the definition of $P_1(g)$ and $\alpha_{i, 2}-\alpha\in(p)$ that $\alpha_{i, 2}^{p\cdot P_1(g)}=1$.
Substituting into the right-hand side of the above equation yields a simplified result that equals $p$, which leads to a contradiction.
Hence, $\hat{f}\nmid t^{p^{s+1}P_1(g)}-1$ in $\mathbb{R}_2$. It follows from \eqref{eq:3A:ftppk-1} that $f\mid t^{p^{s+2}P_1(g)}-1$, then $P_2(\hat{f})=p^{s+2}P_1(g)$.
Applying Lemma~\ref{le:3A:P_kfp}, one concludes that \eqref{eq:peh:g^s} holds for $c=s+1$. This completes the proof of \eqref{eq:peh:g^s}.
\end{proof}


\begin{theorem}\label{coro:2:e>1b=1}
If the monic polynomial $f(t)$ has exactly $m$ non-zero roots in $\mathbb{R}_1$, (\ref{eq:f:pef-nomul}) holds with
\begin{equation*}
 k_{\rm s}=
 \begin{cases}
     1    & \mbox{if } \prod_{i=1}^l a_i>1;\\
 \min\limits_{1\leq i\leq m} f_{\rm k}(\alpha_i) & \mbox{if } \prod_{i=1}^l a_i=1,    
 \end{cases}
\end{equation*}
where $a_i$ is the multiplicity of the $i$-th root $\alpha_i$ of $f(t)$, $l$ is the number of different roots of $f(t)$, 
\begin{equation*}
f_{\rm k}(\alpha_i)=\max\left\{ k \vvert\ord_{k}\left(g^{(k-1)}(\alpha_i)\right)=\ord_1(\alpha_i) \right\}, 
\end{equation*} 
and $m\leq p$.
\end{theorem}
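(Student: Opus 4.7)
The plan is to factor $f(t)$ into pieces indexed by its distinct residue-field roots, express $P_k(f)$ as an $\lcm$ over those pieces, and then close each of the two cases by invoking the lemmas already established.

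First I would Hensel-lift the residue-field factorization $\bar{f}=\prod_{i=1}^{l}(t-\alpha_i)^{a_i}$: the factors $(t-\alpha_i)^{a_i}$ are pairwise coprime in $\mathbb{R}_1[t]$ because the $\alpha_i$ are distinct, so an iterated application of Hensel's lemma produces a factorization
\[
f(t)=\hat{f}_1(t)\,\hat{f}_2(t)\cdots\hat{f}_l(t)\quad\text{in }\mathbb{R}_k[t],
\]
where each $\hat{f}_i$ is monic of degree $a_i$, reduces to $(t-\alpha_i)^{a_i}$ modulo $p$, and the $\hat{f}_i$ are pairwise coprime. Repeating the Chinese-Remainder-Theorem argument used in the proof of Lemma~\ref{le:3:hensem3}, I would then conclude
\[
P_k(f)=\lcm\bigl(P_k(\hat{f}_1),\,P_k(\hat{f}_2),\,\ldots,\,P_k(\hat{f}_l)\bigr),
\]
which reduces the theorem to tracking the periods of each $\hat{f}_i$.

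When $\prod_{i=1}^{l}a_i=1$ every factor is linear and Lemma~\ref{le:3:hensem3} applies verbatim, giving $k_{\rm s}=\min_i f_{\rm k}(\alpha_i)$. When $\prod_{i=1}^{l}a_i>1$, choose an index $i_0$ with $a_{i_0}\ge 2$; the hypothesis $m\le p$ forces $a_{i_0}\le p$, so Lemma~\ref{le:3A:gmpkfm<p} yields
\[
P_k(\hat{f}_{i_0})=p^{k}\cdot P_1(t-\alpha_{i_0})\quad\text{for every }k\ge 1.
\]
Since $\mathbb{R}_1$ is a finite field of characteristic $p$, every $P_1(t-\alpha_i)=\ord_1(\alpha_i)$ is coprime to $p$, so Lemma~\ref{le:3A:R1pf'} gives $\bnu(P_1(\hat{f}_i))\in\{0,1\}$, hence $\bnu(P_1(f))=1$. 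On the other hand, $\bnu(P_2(f))\ge\bnu(P_2(\hat{f}_{i_0}))=2$, so $P_2(f)\ne P_1(f)$ and the definition~\eqref{eq:3A:f_hatkf_define} forces $k_{\rm s}=1$.

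The main obstacle is identifying $\hat{f}_i$ with the polynomial that Lemma~\ref{le:3A:gmpkfm<p} is phrased for: the lifted factor $\hat{f}_i$ typically does not equal $(t-\alpha_{i,k})^{a_i}$ in $\mathbb{R}_k$, since a repeated root in $\mathbb{R}_1$ may split into several distinct roots in $\mathbb{R}_k$. I therefore need to confirm that the lemma really applies to any monic factor whose reduction modulo $p$ equals $(t-\alpha)^{a}$ (which is in fact how it is used inside its own proof). Once that identification is made, the $p$-adic bookkeeping above closes Case~2 and, together with Lemma~\ref{le:3:hensem3} for Case~1, proves the theorem.
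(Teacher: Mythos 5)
Your proposal is correct and follows essentially the same route as the paper's own (very terse) proof: factor $f(t)$ into pairwise-coprime pieces indexed by the distinct roots in $\mathbb{R}_1$, express $P_k(f)$ as the $\lcm$ of the periods of those pieces, and then invoke Lemma~\ref{le:3:hensem3} when all roots are simple and Lemma~\ref{le:3A:gmpkfm<p} when some multiplicity exceeds one. The only difference is that you fill in steps the paper leaves implicit --- the Hensel lift of the residue-field factorization to $\mathbb{R}_k[t]$ (needed for the $\lcm$ identity to make sense at level $k$), the identification of the lifted factor with the polynomial Lemma~\ref{le:3A:gmpkfm<p} actually treats, and the $p$-adic valuation bookkeeping that forces $k_{\rm s}=1$ in the repeated-root case --- so your write-up is a more complete rendering of the same argument.
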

\begin{proof}
The monic polynomial $f(t)$ can be factorized as $f(t)=g_1^{a_1}(t)g_2^{a_2}(t)\cdots g_l^{a_l}(t)$ in $\mathbb{R}_1$, where the factors $\{g_i(t)\}_{i=1}^l$ are pairwise coprime.
Therefore, by the multiplicity of the period function, $P_k(f)=\lcm\{ P_k(g_i^{a_i}) \mid 1\leq i\leq l \}$.
Finally, Lemmas \ref{le:3:orderZ=R}, \ref{le:3:hensem3} and \ref{le:3A:gmpkfm<p} together yield the claimed value of $k_{\rm s}$. This completes the proof of the theorem.
\end{proof}

In $\mathbb{Z}_{p^k}^{mn}$, let $\bx\in (p)$ denote that $\bx\equiv\bo\pmod{p}$. In such cases, it follows from \eqref{eq:3:ma=} that $\mathbf{A}^{i}(\bx)\in (p)$ when $\bx\in (p)$. Since map~\eqref{eq:3:ma=} is a permutation, the elements in $\{\mathbf{A}^i({\bx})\}_{i=0}^{T_k}$ form a cycle in the functional graph $G_{p^k}$. 
This establishes a distinct separation between states belonging to 
$\{\bx\ \vert\ \bx\in(p)\}$ and those outside it. Consequently, the analysis of the period distribution of map~\eqref{eq:3:ma=} bifurcated based on whether $\bx\in (p)$.
Lemma~\ref{le:3:x(p)iso} discloses that for map~\eqref{eq:3:ma=}, the subgraph of $G_{p^{k}}$ composed of all elements satisfying $\bx\in (p)$ in $\Zp{k}$ is isomorphic to $G_{p^{k-1}}$.

\begin{lemma}\label{le:3:x(p)iso}
The cycle $\{\mathbf{A}^i({\bx})\}_{i=0}^{T_k}$ in $\Zp{k-1}$ 
is isomorphic to the cycle $\{\mathbf{A}^i(p\cdot{\bx})\}_{i=0}^{T_k}$ in $\Zp{k}$ with respect to their corresponding operators.
\end{lemma}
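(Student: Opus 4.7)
The plan is to exhibit an explicit $\mathbf{A}$-equivariant bijection $\phi\colon\Zp{k-1}^{mn}\to p\Zp{k}^{mn}$ defined by $\phi(\bx)=p\bx\pmod{p^k}$, where $p\Zp{k}^{mn}=\{\by\in\Zp{k}^{mn}\mid \by\in(p)\}$. Since by~\eqref{eq:3:ma=} the map $\mathbf{A}$ acts as the linear transformation $\bx\mapsto\bM\bx$ with integer-entry matrix $\bM$, once $\phi$ is shown to commute with a single application of $\mathbf{A}$ this promotes immediately to every iterate $\mathbf{A}^i$. Consequently, $\phi$ will carry the orbit $\{\mathbf{A}^i(\bx)\}_i$ in $\Zp{k-1}^{mn}$ bijectively onto the orbit $\{\mathbf{A}^i(p\bx)\}_i$ in $\Zp{k}^{mn}$ while preserving the directed edges given by $\mathbf{A}$, which is exactly the claimed cycle isomorphism.

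In detail, I would first check that $\phi$ is a well-defined bijection onto its image: the equivalence $p\bx\equiv p\bx'\pmod{p^k}$ iff $\bx\equiv\bx'\pmod{p^{k-1}}$ yields both well-definedness and injectivity, and the image is $p\Zp{k}^{mn}$ by construction. Second, I would verify the intertwining $\mathbf{A}\circ\phi=\phi\circ\mathbf{A}$ through the computation
\[
\mathbf{A}(\phi(\bx))\equiv\bM(p\bx)\equiv p(\bM\bx)\equiv p\bigl(\bM\bx\bmod p^{k-1}\bigr)\equiv\phi(\mathbf{A}(\bx))\pmod{p^k},
\]
whose sole arithmetic content is the trivial identity $p\cdot p^{k-1}\equiv 0\pmod{p^k}$. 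A routine induction on $i$ then upgrades this to $\mathbf{A}^i(\phi(\bx))=\phi(\mathbf{A}^i(\bx))$ for every $i\ge 0$, and since $\mathbf{A}$ is a permutation its orbits are cycles of matching lengths in the two graphs.

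The main (and essentially only) technical subtlety is ensuring that the particular integer representative chosen for $\bx\in\Zp{k-1}^{mn}$ does not affect the output $\phi(\bx)\in\Zp{k}^{mn}$, and more importantly that the order in which one reduces modulo $p^{k-1}$ and multiplies by $p$ is immaterial; this is exactly what the reduction identity above secures, courtesy of $\bM$ having coefficients that lift canonically to integers. Beyond that, the linearity of $\mathbf{A}$ makes every remaining step mechanical, so I do not anticipate any further obstacles.
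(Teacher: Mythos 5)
Your proposal is correct and follows essentially the same route as the paper: both exhibit the map $h(\bx)=p\bx$ as a bijection from $\Zp{k-1}^{mn}$ onto the states in $(p)$ inside $\Zp{k}^{mn}$, use the linearity of $\bM$ to get the intertwining $\mathbf{A}\circ h=h\circ\mathbf{A}$, and upgrade it to all iterates by induction. The only cosmetic difference is that the paper additionally dresses each cycle as a cyclic group under composition and states the conclusion as a group isomorphism, whereas you phrase it directly as a cycle isomorphism in the functional graph; the mathematical content is identical, and your explicit handling of well-definedness (reducing modulo $p^{k-1}$ before or after multiplying by $p$) is, if anything, more careful than the paper's.
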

\begin{proof}
Given any $\bx\in\Zp{k-1}^n$, define a multiplication operation $\circ$ for any two elements
$g_1$ and $g_2$ of the set 
$\{\mathbf{A}^i({\bx})\}_{i=0}^{T_k}$:
$g_1\circ g_2=\mathbf{A}^{i_1+i_2}({\bx})\bmod p^{k-1}$,
where $g_1=\mathbf{A}^{i_1}({\bx})$ and $g_2=\mathbf{A}^{i_2}({\bx})$.
Under the operation, the cycle in $\Zp{k-1}^n$ composes a cyclic group of order $T_k$, with the group law
$\mathbf{A}^{i_1}(\bx) \circ \mathbf{A}^{i_2}(\bx)=\mathbf{A}^{i_1+i_2}(\bx)$,
identity $\bx$, and inverses $(\mathbf{A}^i(\bx))^{-1}=\mathbf{A}^{T_k-i}(\bx)\bmod p^{k-1}$.
Similarly, the cycle $\{\mathbf{A}^i(p\cdot{\bx})\}_{i=0}^{T_k}$ in $\Zp{k}$ composes a cyclic group of the same
order under the analogous operation (mod $p^k$).
Define a map
$$
h: \Zp{k-1}^n \;\longrightarrow\; \Zp{k}^n \quad h(\bx)=p\bx.
$$
Since multiplication by $p$ on $\Zp{k}^n$ is injective on residues modulo $p^{k-1}$, $h$ is a bijection from $\Zp{k-1}^n$ onto the subgroup $p\Zp{k}^n$.
Moreover, $\mathbf{A}$ commutes with $h$:
$\mathbf{A}(h(\bx)=\mathbf{A}(p\bx)=\bM(p\bx)=p(\bM \bx)
=h(\mathbf{A}(\bx))$. 
By induction, $h(\mathbf{A}^i(\bx))=\mathbf{A}^i(h(\bx))$ for all $i$.
Thus $h$ restricts to a bijection between the two cycles that intertwines their group operations, establishing a group isomorphism. This completes the proof.
\end{proof}

When $\bx\not\in(p)$, the minimal polynomial of the sequence 
$\{\mathbf{A}^i(\bx)\}_{i\geq0}$ differs from the case where $\bx\in(p)$ previously analyzed.
The evolution of the period of $\{\mathbf{A}^i(\bx)\}_{i\geq0}$ over $\Zp{k}^{mn}$ 
changes as $k$ increases. The analysis of the period is different from the period of the mapping $\mathbf{A}$, because the minimal polynomial of the sequence also changes as $k$ increases. 
Lemma~\ref{le:3A:P_kfp} establishes that, beyond a threshold $k_{\rm s}(\bx_0)$, the period of the sequence in $\mathbb{R}_{k+1}[t]$ increases by a factor of $p$ relative to $T_k(\bx_)$.

\begin{lemma}\label{le:3:xe+1}
For any $\bx_0\in\Zp{k}^{mn}$, one has
 \begin{equation}\label{eq:3:Te+1y}
     T_{k+1}(\bx_0)=
     \begin{cases}
         T_k(\bx_0)         & \text{if } k\leq k_{\rm s}(\bx_0); \\
          p\cdot T_k(\bx_0) & \text{if } k> k_{\rm s}(\bx_0), 
     \end{cases}
 \end{equation}
where
\begin{equation}\label{eq:3a:ksx0}
     k_{\rm s}(\bx)=\max\{k\mid T_k(\bx)=T_1(\bx)\}.
\end{equation}
\end{lemma}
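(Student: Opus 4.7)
The plan is to emulate the proof of Lemma~\ref{le:3A:P_kfp}, promoting the polynomial identity $(t^T-1)\sum_{i=0}^{p-1}t^{iT}=t^{pT}-1$ to an operator identity applied to $\bx_0$. The case $k<k_{\rm s}(\bx_0)$ is immediate from \eqref{eq:3a:ksx0}: both $T_k(\bx_0)$ and $T_{k+1}(\bx_0)$ equal $T_1(\bx_0)$. I therefore focus on $k\ge k_{\rm s}(\bx_0)$ and argue that $T_{k+1}(\bx_0)=p\cdot T_k(\bx_0)$.

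Setting $T:=T_k(\bx_0)$, the relation $\bM^T\bx_0\equiv\bx_0\pmod{p^k}$ lets me write $\bM^T\bx_0=\bx_0+p^k\mathbf{u}$ in $\Zp{k+1}^{mn}$ for a residue $\mathbf{u}$ well-defined modulo $p$. Applying the factorization $\bM^{pT}-\mathbf{I}=(\sum_{i=0}^{p-1}\bM^{iT})(\bM^T-\mathbf{I})$ to $\bx_0$ yields
\[
(\bM^{pT}-\mathbf{I})\bx_0\equiv p^k\sum_{i=0}^{p-1}\ol{\bM}^{iT}\ol{\mathbf{u}}\pmod{p^{k+1}},
\]
where $\ol{\cdot}$ denotes reduction modulo $p$. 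The crux is the claim $\ol{\bM}^T\ol{\mathbf{u}}=\ol{\mathbf{u}}$ in $\mathbb{F}_p^{mn}$; granting this, the displayed sum collapses to $p\ol{\mathbf{u}}=\bo$, so $T_{k+1}(\bx_0)\mid pT$. Together with $T\mid T_{k+1}(\bx_0)$, this forces $T_{k+1}(\bx_0)\in\{T,pT\}$. An induction on $k-k_{\rm s}(\bx_0)$ whose base case $k=k_{\rm s}(\bx_0)$ follows directly from \eqref{eq:3a:ksx0} then rules out $T_{k+1}(\bx_0)=T$, yielding $T_{k+1}(\bx_0)=pT$.

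Proving $\ol{\bM}^T\ol{\mathbf{u}}=\ol{\mathbf{u}}$ will be the main obstacle. My plan splits it into two steps. First, since $T_1(\bx_0)\mid T$, $\ol{\bM}^T$ fixes $\ol{\bx}_0$, and because $\ol{\bM}^T$ commutes with every polynomial in $\ol{\bM}$, it acts as the identity on the cyclic submodule $V:=\mathbb{F}_p[\ol{\bM}]\ol{\bx}_0\subseteq\mathbb{F}_p^{mn}$. Second, I must verify $\ol{\mathbf{u}}\in V$. Since $p^k\mathbf{u}=(\bM^T-\mathbf{I})\bx_0$ lies in the cyclic $\Zp{k+1}[\bM]$-submodule generated by $\bx_0$, a Hensel-type argument using the factorization $t^T-1=\tilde{f}(t)q(t)$ in $\mathbb{F}_p[t]$, where $\tilde{f}$ is the minimal polynomial of $\ol{\bx}_0$ under $\ol{\bM}$, should let me express $\ol{\mathbf{u}}$ as a polynomial in $\ol{\bM}$ applied to $\ol{\bx}_0$. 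The delicate point is ensuring that this lifted factorization yields a representative of $\ol{\mathbf{u}}$ inside $V$, rather than only modulo a higher power of $p$; handling this module-theoretic reduction cleanly, together with the inductive non-vanishing of $\ol{\mathbf{u}}$ at each level $k>k_{\rm s}(\bx_0)$, is where the bulk of the work lies.
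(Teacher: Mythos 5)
Your plan reproduces, in cleaner language, essentially the paper's own argument: the case $k\le k_{\rm s}(\bx_0)$ from \eqref{eq:3a:ksx0}, then the factorization $\bM^{pT}-\mathbf{I}=\bigl(\sum_{i=0}^{p-1}\bM^{iT}\bigr)(\bM^{T}-\mathbf{I})$ applied to $\bx_0$ (the operator form of the identity behind \eqref{eq:3A:qt}--\eqref{eq:3A:t-1rt}), the dichotomy $T_{k+1}(\bx_0)\in\{T,pT\}$, and an induction to exclude $T_{k+1}(\bx_0)=T$. The paper encodes the defect as a null-polynomial statement, ``there is $v(t)\not\in(p)$ such that $t^{p^{s}T_1(\bx_0)}-1=p^{k}v(t)$,'' which, read as an identity of operators on the sequence $\{\bA^i(\bx_0)\}$, means $(\bM^{p^{s}T_1(\bx_0)}-\mathbf{I})\bx_0=p^{k}v(\bM)\bx_0$; asserting the defect has this form is precisely your unproved claim $\ol{\mathbf{u}}\in V=\mathbb{F}_p[\ol{\bM}]\ol{\bx}_0$ (which then yields $\ol{\bM}^{T}\ol{\mathbf{u}}=\ol{\mathbf{u}}$ by commutation). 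So the step you defer as ``where the bulk of the work lies'' is not a postponable technicality: it is the entire content of the lemma, it is exactly the step the paper itself assumes without justification, and your proposal contains no argument for it.

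Moreover, that step is false without hypotheses that neither you nor the lemma states, so the plan cannot be completed as written. Take $p=3$, $f(t)=(t-1)(t-2)$, $\bM=\bigl(\begin{smallmatrix}0&1\\-2&3\end{smallmatrix}\bigr)$, $k=1$, and $\bx_0=(1,1)^\intercal+3\,(1,2)^\intercal=(4,7)^\intercal\in\mathbb{Z}_{9}^{2}$. Since $\ol{\bx}_0=(1,1)^\intercal$ is fixed by $\ol{\bM}$, one has $T=T_1(\bx_0)=1$ and $k_{\rm s}(\bx_0)=1$; but $(\bM^{T}-\mathbf{I})\bx_0=3\,(1,2)^\intercal$, so $\ol{\mathbf{u}}=(1,2)^\intercal$ is the eigenvector of $\ol{\bM}$ for the eigenvalue $2$. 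Hence $\ol{\mathbf{u}}\notin V=\mathbb{F}_3\cdot(1,1)^\intercal$ and $\ol{\bM}^{T}\ol{\mathbf{u}}=2\ol{\mathbf{u}}\neq\ol{\mathbf{u}}$; indeed $\bM^{j}\bx_0=(1,1)^\intercal+3\cdot 2^{j}(1,2)^\intercal$ gives $T_2(\bx_0)=2\notin\{T_1,\,3T_1\}$, so the dichotomy---and the lemma itself---fails here. The obstruction is that $\ol{\mathbf{u}}$ can carry eigencomponents of $\ol{\bM}$ that are invisible in $\ol{\bx}_0$ because they enter $\bx_0$ multiplied by $p$, and whose eigenvalues have multiplicative order not dividing $pT$; the same failure occurs even with $f(\pm 1)\neq 0$ (e.g.\ $p=7$, $f(t)=(t-18)(t-6)$, $\bx_0=(1,18)^\intercal+7\,(1,6)^\intercal$ over $\mathbb{Z}_{49}$, where $T_1=3$ but $T_2=6\neq 21$). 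Any correct version needs an additional hypothesis forcing all eigenvalues appearing in the decomposition of $\bx_0$ to have the same multiplicative order modulo $p$---automatic for the Cat map, whose roots are $\alpha$ and $\alpha^{-1}$---and a proof must invoke it; yours, like the paper's, never does.
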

\begin{proof}
Given an initial state $\bx_0$, there is $\bM^{T_1(\bx_0)}(\mathbf{x_0})-\mathbf{x_0}=\bo$ in $\mathbb{Z}_p$.
When $k\leq k_{\rm s}(\bx_0)$, one obtains $T_k(\bx_0)=T_1(\bx_0)$. One can prove 
$t^{p^{k-k_{\rm s}(\bx_0)}T_1(\bx_0)}-1$ is a null polynomial of the sequence in $\mathbb{Z}_{p^k}$ and not a null polynomial in $\mathbb{Z}_{p^{k+1}}$ by mathematical induction on $k$. When $k = k_{\rm s}(\bx_0)$, the statement holds. Assume that the statement holds for $k\leq k_{\rm s}(\bx_0)+s$, there is $v(t)\not\in(p)$ such that $t^{p^sT_1(\bx_0)}-1= p^{k}v(t)$ in $\mathbb{Z}_{p^{k+1}}$. 
Similar to \eqref{eq:3A:qt} and \eqref{eq:3A:t-1rt}, one has $t^{p^sT_1(\bx_0)}-1- p^{k}v(t)\mid t^{p^{s+1}T_1(\bx_0)}-1+p^{k+1}v(t)$, which means that the statement holds for $k=k_{\rm s}(\bx_0)+s+1$. This completes the proof of the statement. When $k> k_{\rm s}(\bx_0)$,
\begin{equation}
    \begin{cases}
        (\bM^{p^{k-k_{\rm s}(\bx_0)}T_1(\bx_0)}-\mathbf{I})(\bx_0)  & \neq \bo;\\
        (\bM^{p^{k-k_{\rm s}(\bx_0)+1}T_1(\bx_0)}-\mathbf{I})(\bx_0) &= \bo
    \end{cases}
\end{equation}
in $\mathbb{Z}_{p^{k+1}}$, namely $T_{k+1}(\bx_0)=pT_{k}(\bx_0)$. This lemma holds.
\end{proof}

Since $\bM$ is a companion matrix, its eigenvalues correspond precisely to the roots $\{\alpha_i\}_{i=1}^m$ of the minimal polynomial $f(t)$. Suppose $t\nmid f(t)$, in $\mathbb{R}_1$, since $\mathbb{R}_1$ is a field and $f(t)$ can be factored, one has the Jordan canonical form $\bJ_1$ of $\bM$ and there is an invertible matrix $\bQ_1$ such that 
\begin{equation}\label{eq:3A:M-PDP}
\bM=\bQ_1\bJ_1\bQ_1^{-1}.
\end{equation}
Then $\bQ_1$ is constructed by the eigenvectors $\{\bv_i\}_{i=1}^m$, where
$\bv_i$ satisfying 
\begin{equation}\label{eq:3A:eignvec}
    (\bM-\alpha_i\mathbf{I})\bv_i=\bo.
\end{equation}
Since $\bQ$ is an invertible matrix, one has 
\begin{equation}\label{eq:3A:detP1}
    \det(\bQ_1)\not\in(p)
\end{equation}
In $\mathbb{R}_k$, similar to \eqref{eq:3A:eignvec}, one has $\bv_{i,k}$ such that 
\begin{equation*}\label{eq:3A:eignveck}
    (\bM-\alpha_{i,k}\mathbf{I})\bv_{i,k}=\bo.
\end{equation*}
Since $\alpha_{i,k}-\alpha_i\in(p)$, one has 
\begin{equation}\label{eq:3A:vk-v}
    \bv_{i,k}-\bv_i\in(p).
\end{equation}
Let $\bQ_k$ be constructed by the eigenvectors $\{\bv_{i,k}\}_{i=1}^m$.
It follows from \eqref{eq:3A:detP1} and \eqref{eq:3A:vk-v} that $\det(\bQ_k)\not\in(p)$. Hence, similar to \eqref{eq:3A:M-PDP}, one has 
\begin{equation*}
\bM=\bQ_k\bJ_k\bQ_k^{-1}.
\end{equation*}
Since $f(t)=\prod_{i=1}^m(t-\alpha_{i,k})$ in $\mathbb{R}_k$, $\{\alpha_{i,k}\}_{i=1}^m$ are eigenvalues of $\bM$, then $\{\alpha_{i,k}\}_{i=1}^m$ are the diagonal elements of the $\bJ_k$.
Based on the above analysis, the variation in the sequence period length in $\mathbb{Z}_{p^k}$ as $k$ increases is illustrated by Lemma~\ref{le:3:xe+1}.

\begin{lemma}\label{le:3A:ksx0}
Equation~\eqref{eq:3:Te+1y} holds with
\[
k_{\rm s}(\bx_0) = \min_{1\leq j \leq m_h} k(\alpha_j),
\]
where
\[
k(\alpha_j) = \max\{k \mid \ord_k(\alpha_{j,k}) = \ord_1(\alpha_j)\},
\]
$\{\alpha_j\}_{1\leq j\leq m_h}$ are the roots of the minimal polynomial $h(t)$ of the sequence $\{\bA^i(\bx_0)\}_{i\geq 0}$ in $\mathbb{R}_1$, and $m_h = \deg h(t)$.
\end{lemma}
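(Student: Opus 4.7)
The plan is to realize $T_k(\bx_0)$ as the least period of the minimal polynomial $h(t)$ of the sequence $\{\bA^i(\bx_0)\}_{i\geq 0}$ over $\mathbb{Z}_{p^k}$, and then to express this period as the least common multiple of the multiplicative orders of the lifted roots $\alpha_{j,k}$ of $h(t)$. The stability condition $T_k(\bx_0)=T_1(\bx_0)$ then decomposes into simultaneous per-root stability conditions, each of which is exactly what $k(\alpha_j)$ measures.

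First, I would apply Lemma~\ref{le:2:T=P_e} and Lemma~\ref{le:3:orderZ=R} to rewrite $T_k(\bx_0)=P'_k(h)=P_k(h)$ in $\mathbb{R}_k[t]$. Since the diagonalization $\bM=\bQ_k\bJ_k\bQ_k^{-1}$ preceding the lemma forces $f(t)$, and hence its divisor $h(t)$, to split into pairwise distinct linear factors over $\mathbb{R}_k$, the polynomial $h(t)$ factors as $h(t)=\prod_{j=1}^{m_h}(t-\alpha_{j,k})$ with pairwise coprime linear factors. Repeating the Chinese Remainder Theorem step from the proof of Lemma~\ref{le:3:hensem3} on this factorization (and invoking \eqref{eq:ord=pe}) yields
\begin{equation*}
T_k(\bx_0) = P_k(h) = \lcm_{1\leq j\leq m_h} \ord_k(\alpha_{j,k}),
\end{equation*}
and specializing to $k=1$ gives $T_1(\bx_0)=\lcm_{1\leq j\leq m_h}\ord_1(\alpha_j)$.

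Next, because $\alpha_{j,k}\equiv\alpha_j\pmod p$ by \eqref{eq:3A:alpha-}, reduction modulo $p$ cannot increase the multiplicative order, so $\ord_1(\alpha_j)\mid\ord_k(\alpha_{j,k})$ for each $j$. Therefore $T_1(\bx_0)\mid T_k(\bx_0)$, and equality occurs if and only if $\ord_k(\alpha_{j,k})=\ord_1(\alpha_j)$ for every $j\in\{1,\dots,m_h\}$. By the defining property of $k(\alpha_j)$, this per-root equality at level $k$ is equivalent to $k\leq k(\alpha_j)$, so $T_k(\bx_0)=T_1(\bx_0)$ holds iff $k\leq\min_j k(\alpha_j)$. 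Comparing with the definition \eqref{eq:3a:ksx0} of $k_{\rm s}(\bx_0)$ as the largest such $k$, and using Lemma~\ref{le:3:xe+1} to ensure the period is strictly multiplied by $p$ beyond this threshold, the claimed identity follows.

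The main obstacle is ensuring that the roots of the minimal polynomial $h(t)$ of the sequence are cleanly identified with a prescribed subset of the eigenvalues $\{\alpha_{j,k}\}$ inherited from the diagonalization, even when some coordinates of $\bQ_k^{-1}\bx_0$ happen to lie in the ideal $(p)$ without vanishing. I would handle this either by restricting attention to the cyclic $\bM$-invariant submodule generated by $\bx_0$, on which $\bM$ still diagonalizes with eigenvalues drawn from the $\alpha_{j,k}$ whose associated eigencomponents are units, or by peeling off one layer via Lemma~\ref{le:3:x(p)iso} so that any non-unit eigencomponent is absorbed into the lower-modulus subgraph. Once this identification is made, the remainder of the argument is a verbatim adaptation of the proof of Lemma~\ref{le:3:hensem3}.
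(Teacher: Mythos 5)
Your route is genuinely different from the paper's --- you argue through the order of the minimal polynomial (Lemma~\ref{le:2:T=P_e}, Lemma~\ref{le:3:orderZ=R}, and a CRT factorization as in Lemma~\ref{le:3:hensem3}), whereas the paper diagonalizes $\bM$, passes to eigencoordinates $\by_0=\bQ_1^{-1}\bx_0$, and inductively deletes coordinates that vanish modulo $p$ --- but your opening identity has a genuine gap, and it is exactly the obstacle you flag at the end, not a technical wrinkle. Lemma~\ref{le:2:T=P_e} computes $T_k(\bx_0)$ as the order of the minimal polynomial of the sequence \emph{over} $\Zp{k}$, and that polynomial is in general strictly larger than any lift of the mod-$p$ minimal polynomial $h(t)$: an eigendirection whose coefficient in $\bQ_k^{-1}\bx_0$ is nonzero but divisible by $p$ is invisible in $h$, yet it contributes to $T_k(\bx_0)$ once $k\ge 2$. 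Concretely, take $p=7$, $f(t)=(t-18)(t-3)$ over $\mathbb{Z}_{49}$ (so $\mathbb{R}_k=\Zp{k}$), $\bM$ its companion matrix, and $\bx_0=(8,39)^\intercal=\bQ_2(1,7)^\intercal$, where $\bQ_2$ has columns $(1,18)^\intercal$ and $(1,3)^\intercal$. Modulo $7$ one has $\bM\bx_0\equiv 4\bx_0$, so $h(t)=t-4$, $T_1(\bx_0)=3$, and $k(\alpha_1)\ge 2$ because $\ord_{49}(18)=3=\ord_7(4)$; yet a period $T$ at precision $2$ must satisfy both $18^{T}\equiv 1\pmod{49}$ and $7\cdot 3^{T}\equiv 7\pmod{49}$, i.e.\ $3\mid T$ and $6\mid T$, so $T_2(\bx_0)=6\neq 3$ and $k_{\rm s}(\bx_0)=1<\min_j k(\alpha_j)$. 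Hence your displayed identity $T_k(\bx_0)=\lcm_j \ord_k(\alpha_{j,k})$, with the lcm running only over roots of $h$, is false, and so is the conclusion drawn from it under this reading of $h$.

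Neither of your proposed patches repairs this. The cyclic $\bM$-invariant submodule generated by $\bx_0$ still contains the offending direction (in the example, $18\bx_0-\bM\bx_0$ has eigencoordinates $(0,7)^\intercal$), so restricting to it removes nothing; and Lemma~\ref{le:3:x(p)iso} only identifies cycles of states lying entirely in $(p)$ with cycles one precision lower --- it cannot ``absorb'' individual $p$-divisible eigencomponents of a unit state $\bx_0\notin(p)$, whose single cycle does not split into a unit part and a $(p)$ part. For comparison, the paper's own proof shares this soft spot: by deleting the mod-$p$-zero coordinates of $\by_0$ it establishes only the inequality $k_{\rm s}(\bx_0)\le\min_j k(\alpha_j)$ (the period must grow past that threshold), and it never verifies that the deleted, nonzero-but-$p$-divisible coordinates do not force earlier growth --- which is precisely what happens above. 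A sound argument must run the minimum over the roots of the minimal polynomial of the sequence at precision $k$ (equivalently, over all eigenvalues whose eigencomponents of $\bx_0$ are nonzero in $\Zp{k}$, with $p$-divisible components weighted by their valuation); only with that reformulation can the lcm decomposition you want --- or the paper's coordinate-deletion argument --- be pushed to the claimed equality.
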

\begin{proof}
In $\mathbb{R}_{1}$, since $\mathbb{Z}_p$ is a finite field, for the sequence generated by $\bx_0$, $h(t)$ as a unique minimal polynomial can divide all null polynomials, that is, $h(t)\mid f(t)$. Hence, there are all roots $\{\alpha_{j}\}_{1\leq j\leq m_h}$ of $h(t)$ such that $\alpha_{j}^{T_{1}(\bx_0)}-1=0$. 
Referring to
\begin{equation}\label{eq:3A:MTMPT}
        \bM^{T_{1}(\bx_0)}\bx_0\equiv \bx_0 \pmod{p}, 
\end{equation}    
and 
\[
\bM^{s\cdot T_{k}(\bx_0)}-\mathbf{I}=\bQ_{k}(\bJ_{k}^{s\cdot T_{k}(\bx_0)}-\mathbf{I})\bQ_{k}^{-1},
\]
one obtains
\begin{equation}\label{eq:3A:J1T}
        (\bJ_{1}^{T_{1}(\bx_0)}-\mathbf{I})\by_0= \bo 
\end{equation}
where $\by_0=\bQ_{1}^{-1}\bx_0$. 

One can prove the statement that there exists a vector $\mathbf{y}_{0,s}$ with no zero elements such that 
\begin{equation}\label{eq:J1s}
    (\bJ_{1,s}^{T_1(\bx_0)}-\mathbf{I})\by_{0,s}=\bo,
\end{equation}
where $\mathbf{J}_{1,s}$ is a submatrix of $\mathbf{J}_1$ corresponding to $f_s(t)$ and $f_s(t)$ is the null polynomial of both $\{\mathbf{J}_{1,s}^i \mathbf{y}_{0,s}\}_{i\geq 0}$ and the $\{\bA^i(\bx_0)\}_{i\geq 0}$ by mathematical induction on $s$. When $s = 1$, let $\mathbf{y}_{0,1}=\mathbf{y}_{0}$ and $\mathbf{J}_{1,s}=\mathbf{J}_{1,1}$, it follows from ~\eqref{eq:3A:J1T} that the statement holds. Assume that the statement holds for $s=l$.
If the $i$-th element of $\by_{0,l}$ is $0$, one has the $i$-th element of $(\bJ_{1,l}^{T_1(\bx_0)}-\mathbf{I})\by_{0,l}$ is $0$. Note that the null polynomial of the sequence $\{\bA^i(\bx_0)\}_{i\geq 0}$ is also the null polynomial of the sequence of $\{\bJ_{1,l}^i(\by_{0,l})\}_{i\geq 0}$ with deleting the $i$-th element. By matrix transformation, there is a Jordan canonical form $\bJ_{1,l+1}$ of $\frac{f_s(t)}{t-\alpha_{i}}$ which is the matrix deleting the $i$-th row and column of $\bJ_{1,l}$. Then 
\[
(\bJ_{1,l+1}^{T_1(\bx_0)}-\mathbf{I})\by_{0, l+1}=\bo,
\]
where $\by_{0, l+1}$ is $\by_{0,l}$ with deleting the $i$-th element. 
Hence, $\frac{f_s(t)}{t-\alpha_{i}}$ is the null polynomial of 
$\{\bJ_{1,l+1}^i\by_{0,l+1}\}_{i\geq 0}$ and
$\{\bA^i(\bx_0)\}_{i\geq 0}$. This statement holds.

Since $h(t)$ is the minimal polynomial of $\{\bA^i(\bx_0)\}_{i\geq 0}$, there is $s$ such that $h(t)\mid f_s(t)$ and any element of $\by_{0,s}$ is not $0$.
Let $\alpha_j$ is a root of $f(t)$ in $\mathbb{R}_1$, 
there is the $i_j$-th row of $\mathbf{J}_{1,s}$ where only the diagonal element is $\alpha_j$.
Since ~\eqref{eq:J1s}, in $\mathbb{R}_{k(\alpha_{i_j})+1}$, there is $\alpha_{i_j, k}^{T_1(\bx)}-1= c p^{k(\alpha_{i_j})}$.
It follows from ~\eqref{eq:J1s} that $(\bJ_{k,s}^{T_1(\bx)}-\mathbf{I})\by_{0,s}\neq \bo$. Then $(\bJ_{k}^{T_1(\bx)}-\mathbf{I})\by_{0}\neq \bo$, namely $(\bM^{T_1(\bx)}-\mathbf{I})\bx_{0}\neq \bo$.
 Since ~\eqref{eq:3a:ksx0}, one has $k_{\rm s}(\bx_0)=\min k(\alpha_{i_j})$.
\end{proof}

\begin{lemma}\label{le:3:xe+1c}
For any $\bx_0\in\mathbb{Z}_{p^{k'}}^{\times}$, one has
    \begin{equation}\label{eq:3A:Tx0+c}
        T_{k}(\bx_0+ p^{k'}\cdot{\bf c})=\lcm( T_{k}(\bx_0), T_{k-k'}({\bf c})), 
    \end{equation}
where ${\bf c}\in\mathbb{Z}_{p^{k-k'}}^{mn}$.
\end{lemma}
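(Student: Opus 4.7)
The plan is to prove $T_k(\bx_0 + p^{k'}\cdot\mathbf{c}) = L$ where $L = \lcm(T_k(\bx_0), T_{k-k'}(\mathbf{c}))$ by establishing both divisibilities $T_k(\bx_0 + p^{k'}\cdot\mathbf{c}) \mid L$ and $L \mid T_k(\bx_0 + p^{k'}\cdot\mathbf{c})$, relying on the linearity of $\bA$ and the structural lemmas proved earlier.

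\emph{Upper bound.} Because $T_k(\bx_0) \mid L$, one has $\bM^L\bx_0 \equiv \bx_0 \pmod{p^k}$, and because $T_{k-k'}(\mathbf{c}) \mid L$, one has $\bM^L\mathbf{c} \equiv \mathbf{c} \pmod{p^{k-k'}}$, whence $p^{k'}\bM^L\mathbf{c} \equiv p^{k'}\mathbf{c} \pmod{p^k}$. Summing the two congruences and invoking $\mathbb{Z}_{p^k}$-linearity of $\bM$ yields $\bM^L(\bx_0 + p^{k'}\cdot\mathbf{c}) \equiv \bx_0 + p^{k'}\cdot\mathbf{c} \pmod{p^k}$, so $T_k(\bx_0 + p^{k'}\cdot\mathbf{c}) \mid L$.

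\emph{Lower bound.} Let $T = T_k(\bx_0 + p^{k'}\cdot\mathbf{c})$. Linearity rewrites the periodicity condition as
\[
(\bM^T - \mathbf{I})\bx_0 \equiv -p^{k'}(\bM^T - \mathbf{I})\mathbf{c} \pmod{p^k}. \qquad (\star)
\]
Iterating Lemma~\ref{le:3:x(p)iso} identifies the cycle of $p^{k'}\cdot\mathbf{c}$ in $\mathbb{Z}_{p^k}^{mn}$ with the cycle of $\mathbf{c}$ in $\mathbb{Z}_{p^{k-k'}}^{mn}$, so the condition $p^{k'}(\bM^T - \mathbf{I})\mathbf{c} \equiv \bo \pmod{p^k}$ is equivalent to $(\bM^T - \mathbf{I})\mathbf{c} \equiv \bo \pmod{p^{k-k'}}$, i.e., $T_{k-k'}(\mathbf{c}) \mid T$. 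Via $(\star)$ this is in turn equivalent to $(\bM^T - \mathbf{I})\bx_0 \equiv \bo \pmod{p^k}$, i.e., $T_k(\bx_0) \mid T$. Once both divisibilities are established, $L \mid T$ follows immediately.

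\emph{Main obstacle.} The delicate remaining step is to rule out spurious algebraic cancellations in $(\star)$, i.e., the scenario in which the two sides are nonzero yet equal at the minimal $T$. The plan is to track the $p$-adic valuation of $(\bM^T - \mathbf{I})\bx_0$ along the filtration $\{p^j\mathbb{Z}_{p^k}^{mn}\}_{j=k'}^{k}$. By Lemma~\ref{le:3:xe+1}, this valuation can only cross the threshold from $\geq j$ to $\geq j{+}1$ at integer multiples of $T_j(\bx_0)$, and each crossing beyond $k_{\rm s}(\bx_0)$ multiplies the requisite period by $p$. Matching these discrete valuation jumps against the valuation of $(\bM^T - \mathbf{I})\mathbf{c}$ in $\mathbb{Z}_{p^{k-k'}}^{mn}$ (again transported through Lemma~\ref{le:3:x(p)iso}) and exploiting the minimality of $T$ forces both sides of $(\star)$ to vanish simultaneously, yielding $L \mid T$ and completing the proof.
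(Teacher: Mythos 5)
Your upper bound coincides with the paper's and is fine, as is the reduction of the lower bound to excluding ``spurious cancellation'' in $(\star)$. The genuine gap is that everything after \emph{Main obstacle} is a plan rather than a proof, and the plan as stated cannot be completed: valuation tracking is structurally incapable of ruling out cancellation. Indeed, suppose $(\bM^T-\mathbf{I})\bx_0 \equiv -p^{k'}(\bM^T-\mathbf{I})\mathbf{c} \pmod{p^k}$ with both sides nonzero, and let $j$ be the common $p$-adic valuation, $k'\le j<k$. This scenario is \emph{automatically} consistent with your filtration bookkeeping: it corresponds exactly to $T_j(\bx_0)\mid T$, $T_{j+1}(\bx_0)\nmid T$, $T_{j-k'}(\mathbf{c})\mid T$, $T_{j-k'+1}(\mathbf{c})\nmid T$, so the ``valuation jumps'' on the two sides match perfectly and no contradiction appears. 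Cancellation forces valuation matching as a consequence; it is never refuted by it. Minimality of $T$ does not help either, since the set of exponents satisfying $(\star)$ is precisely the set of multiples of $T$, and the question of whether $T=L$ or $T<L$ is exactly the question of whether cancellation occurs at $T$ --- which is a directional statement about the two vectors, not a statement about their valuations.

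The paper closes this gap with the eigenstructure of $\bM$ over the extension ring $\mathbb{R}_k$, and some such structural input is unavoidable. Writing $\bM=\bQ_k\bJ_k\bQ_k^{-1}$, $\by_0=\bQ_k^{-1}\bx_0$, $\mathbf{b}=\bQ_k^{-1}\mathbf{c}$, your equation $(\star)$ decouples componentwise; for the bottom row $j$ of the Jordan block of an eigenvalue $\alpha_{i_1,k}$ controlling the period growth of $\bx_0$ (one with $\alpha_{i_1,k}^{T_{k-1}(\bx_0)}-1$ of valuation exactly $k-1$ and with $y_j$ a unit; such an eigenvalue exists, since otherwise $\bJ_k^{T_{k-1}(\bx_0)}-\mathbf{I}$ would be invertible and force $\bx_0\equiv\bo$, contradicting $\bx_0\in\mathbb{Z}_{p^{k'}}^{\times}$), the equation reads $\left(\alpha_{i_1,k}^{T}-1\right)\left(y_j+p^{k'}b_j\right)=0$. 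The crucial point is that in eigencoordinates both sides of $(\star)$ share the scalar factor $\alpha_{i_1,k}^{T}-1$, while $y_j+p^{k'}b_j$ is a unit because $y_j$ is; hence $\alpha_{i_1,k}^{T}\equiv 1\pmod{p^k}$, cancellation is impossible, and $T_k(\bx_0)\mid T$ follows, after which $T_{k-k'}(\mathbf{c})\mid T$ comes out of $(\star)$ as in your argument. It is this common-factor structure --- invisible to valuations --- that the paper's case analysis ($T_{k-1}(\bx_0)=T_k(\bx_0)$ handled directly, $T_k(\bx_0)=pT_{k-1}(\bx_0)$ handled via the Jordan form) supplies, and your write-up needs it (or an equivalent structural argument) to become a proof.
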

\begin{proof}
Let $l=\lcm(T_{k}(\bx_0), T_{k-k'}({\bf c}))$. 
In $\mathbb{Z}_{p^{k}}$, since $\bM^{ T_{k}(\bx_0)}\bx_0=\bx_0$, one obtains 
$\bM^{l}(\bx_0+ p^{k'}\cdot{\bf c})=\bx_0+ p^{k'}\cdot{\bf c}$.
Hence,
\[
T_{k}(\bx_0+ p^{k'}\cdot{\bf c})\mid l.
\]
Since the sequence generated with initial state $\bx_0+ p^{k'}\cdot{\bf c}$ in $\mathbb{Z}_{p^{k}}$ corresponds to the sequence generated with initial state $\bx_0$ in $\mathbb{Z}_{p^{k-1}}$, one has 
\begin{equation}\label{eq:3A:Tk|Tk+1}
    T_{k-1}(\bx_0)\mid T_{k}(\bx_0+ p^{k'}\cdot{\bf c}).
\end{equation}
There are two cases of this proof whether $T_{k}(\bx_0)=T_{k+1}(\bx_0)$ or not.
\begin{itemize}
    \item $T_{k-1}(\bx_0)=T_{k}(\bx_0)$: one has $T_{k}(\bx_0+ p^{k'}\cdot{\bf c})=s\cdot T_{k-1}(\bx_0)$, where $s$ is an integer. Then 
\begin{equation*}
    \bM^{s\cdot T_{k}(\bx_0)}(\bx_0+ p^{k'}\cdot{\bf c})=\bx_0+p^{k'}\cdot\bM^{s\cdot T_{k}(\bx_0)}{\bf c}=\bx_0+p^{k'}\cdot {\bf c}.
\end{equation*}
namely $p^{k'}\cdot\bM^{s\cdot T_{k}(\bx_0)}{\bf c}={\bf c}$. Hence $T_{k-k'}(\mathbf{c})\mid s\cdot T_{k+1}(\bx_0)$, that is, $T_{k+1}(\bx_0+ p^{k'}\cdot{\bf c})=l$.
\item $T_{k-1}(\bx_0)\neq T_{k}(\bx_0)$, it follows from \eqref{eq:3:Te+1y} that $T_{k}(\bx_0)=p \cdot T_{k-1}(\bx_0)$. 
In $\mathbb{R}_{k}$, referring to
\begin{equation}\label{eq:3A:MTMPT}
    \left\{\begin{split}
        \bM^{T_{k-1}(\bx_0)}\bx_0&\equiv \bx_0 \pmod{p^{k-1}}; \\
        \bM^{T_{k-1}(\bx_0)}\bx_0&\not\equiv \bx_0 \pmod{p^{k}}
    \end{split}
    \right.
\end{equation}
and 
\[
    \bM^{s\cdot T_{k-1}(\bx_0)}-\mathbf{I}=\bQ_{k}(\bJ_{k}^{s\cdot T_{k-1}(\bx_0)}-\mathbf{I})\bQ_{k}^{-1},
\]
one obtains
\begin{equation}\label{eq:3A:Dy}
\left\{
\begin{split}
        (\bJ_{k-1}^{T_{k-1}(\bx_0)}-\mathbf{I})\by_0&\in (p^{k-1}),\\
         (\bJ_{k-1}^{T_{k-1}(\bx_0)}-\mathbf{I})\by_0&\not\equiv \bo 
\end{split}
 \right.
\end{equation}
where $\by_0=\bQ_{k}^{-1}\bx_0$. Note that $\{\alpha_{i,k}\}_{i=1}^m$ are the diagonal elements of the $\bJ_k$. Assume $\alpha_{i,k}^{T_{k-1}(\bx_0)}-1\neq 0\pmod{p^{k-1}}$ for any $i$. 
In $\mathbb{R}_k$, $\det(\bJ_k^{T_{k-1}(\bx_0))}-\mathbf{I})\neq 0$. It follows from the above equation that $\by_0=\bo$. Since $\bQ_{k}$ is an invertible matrix, one has $\bx_0\equiv \bo$, which is contradicted by $\bx_0\in\mathbb{Z}_{p^{k'}}^{\times}$. Hence, there is $\alpha_{i_i+l,k}$ such that  
\begin{equation}\label{eq:3A:alphatk-1}
 \left\{\begin{split}
     \alpha_{i_1,k}^{T_k(\bx_0)}-1 & \in(p^{k-1});\\
 \alpha_{i_1,k}^{T_k(\bx_0)}-1     & \not\equiv 0\pmod{p^{k}},
 \end{split}
 \right.
 \end{equation}
 Let $s$ is algebraic multiplicity of the value $\{\alpha_{i_i,k}\}$.
 For the $s\times s$ Jordan block in $\bJ_k$ corresponding $\alpha_{i_1,k}$, there is $j$-th row of $\bJ$ corresponding to its bottom row has only value $\alpha_{i_1,k}$ on the diagonal and zeros elsewhere. 
Referring to \eqref{eq:3A:Dy}, one has $y_{j}\neq 0\pmod{p}$.
Assume 
\[
T_{k}(\bx_0)\nmid T_{k}(\bx_0+ p^{k'}\cdot{\bf c}).
\]
Since relation~\eqref{eq:3A:Tk|Tk+1}, one has 
\begin{equation*}
        \bM^{s\cdot T_{k-1}(\bx_0)}(\bx_0+p^{k'}\cdot \mathbf{c})=\bx_0+p^{k'}\cdot \mathbf{c}
\end{equation*}
namely
\[
 (\bM^{s\cdot T_{k-1}(\bx_0)}-\mathbf{I})\bx_0=-(\bM^{s\cdot T_{k-1}(\bx_0)}-\mathbf{I})(p^{k'}\cdot \mathbf{c}).
\]
According to \eqref{eq:3A:Dy}, one has 
\[
(\bJ_{k}^{s\cdot T_{k-1}(\bx_0)}-\mathbf{I})\by_0= p^{k'}\cdot (\bJ_{k}^{s\cdot T_{k-1}(\bx_0)}-\mathbf{I})\mathbf{b},
\]
where $\mathbf{b}=\bQ_{k}^{-1}\mathbf{c}$. Then 
\[
    (\alpha_{i_1,k}^{s\cdot T_{k-1}(\bx_0)}-1)y_{i_1}=0,
\]
which is a contradiction.
Hence, 
\[
T_{k}(\bx_0)\mid T_{k}(\bx_0+ p^{k'}\cdot{\bf c}).
\]
It follows from
\[
\begin{split}
    &\bM^{ T_{k}(\bx_0+ p^{k'}\cdot{\bf c})}(\bx_0+ p^{k'}\cdot{\bf c})\\
    &=\bx_0+p^{k-1}\bM^{T_{k}(\bx_0+ p^{k'}\cdot{\bf c})}{\bf c}\\
&=\bx_0+{\bf c}
\end{split}
\]
that
\[
\bM^{T_{k}(\bx_0+ p^{k'}\cdot{\bf c})}{\bf c}={\bf c},
\]
then $T_{k-k'}(\mathbf{c})\mid T_{k}(\bx_0+ p^{k'}\cdot{\bf c})$.
So, $T_{k}(\bx_0+ p^{k'}\cdot{\bf c})=\lcm(T_{k}(\bx_0), T_{k-k'}({\bf c}))$. 
\end{itemize}
Combining the above analysis, this lemma holds.
\end{proof}

Given a state $\bx\in\mathbb{Z}_{p^{k^*}}^{mn}$, there exists a cycle containing $\bx$ in $G_{p^{k^*}}$, and all elements of this cycle form a set denoted by $C_{k}({\bx})=\{ \hat{\bx} \vvert \hat{\bx}=\mathbf{A}^i(\bx), \bx\in{\Zp{k}}, i\in \mathbf{Z}\}$. 
As $k$ increases, for any element $\bx'$ corresponds to an element ${\bx}$ that composes a set $N_{k, \bx}=\{\bx'\vert \bx'\equiv\bx\pmod{p^{k^*}}\}$. 
Hence, $C_{k}(\bx')$ corresponds to a cycle $C_{k^*}(\bx)$ in $G_{p^{k^*}}$. 
Let
\[
S_{k, \bx} = \bigcap_{\bx'\in N_{k, \bx}} C_{k}(\bx')
\]
for $\bx\in\mathbb{Z}^{mn}_p$ indicates that all elements in this set form several cycles corresponding to the cycle $C_{k^*, \bx}$. 
Furthermore, let 
\begin{equation}\label{eq:3A:NTS}
    N(T, S_{k, {\bx}})=|\{C_{k}(\bx')\vvert C_{k}(\bx')\subseteq S_{k, {\bx}}, |C_{k}(\bx')|=T\}|
\end{equation}
be the number of cycles of length $T$ in the set $S_{k, {\bx}}$.

\begin{lemma}\label{le:3A:N=a}
   Given $T^*>P_1(f)$, for any $\bx_0$ such that $T_k(\bx_0)=T^*$, 
   there is $S_{k+1}(\bx_0)=\{\bx \vert (\bx \bmod p^k)\in C_k(\bx_0), \bx\in\mathbb{Z}_{p^{k+1}}^{mn}\}$
   and 
   \[
   N(T, S_{k+1}(\bx_0))=\frac{a(T^*, T)T^*}{T},
   \]
   where
       \begin{equation}\label{eq:3a:aTT}
        a(T^*, T_1)=|\{\mathbf{c}\vvert \mathbf{c}\in\mathbb{Z}^{mn}_p,\lcm(pT^*, T_1(\mathbf{c}))=T\}|
    \end{equation}
    is a constant depending on $T^*$ and $T$.
\end{lemma}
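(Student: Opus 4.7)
The plan is to parameterize $S_{k+1}(\bx_0)$ explicitly and then read off each cycle length directly from Lemma~\ref{le:3:xe+1c}. Since $T_1(\bx_0)$ divides $T_1=P_1(f)<T^*=T_k(\bx_0)$, one has $T_k(\bx_0)\neq T_1(\bx_0)$, and therefore $k>k_{\rm s}(\bx_0)$ by~\eqref{eq:3a:ksx0}. Lemma~\ref{le:3:xe+1} then yields $T_{k+1}(\tilde{\bx}_0)=pT^*$ for any fixed lift $\tilde{\bx}_0\in\mathbb{Z}_{p^{k+1}}^{mn}$ of $\bx_0$.

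With such a lift in hand, I claim that the assignment $(i,\mathbf{d})\mapsto\bA^i(\tilde{\bx}_0)+p^k\mathbf{d}$ is a bijection from $\{0,\dots,T^*-1\}\times\mathbb{Z}_p^{mn}$ onto $S_{k+1}(\bx_0)$: reducing modulo $p^k$ recovers $\bA^i(\bx_0)\in C_k(\bx_0)$, which pins down $i$, and then $p^k\mathbf{d}$ is forced. In particular $|S_{k+1}(\bx_0)|=T^*p^{mn}$. Because $\bA^i(\tilde{\bx}_0)$ lies on the same $\bA$-orbit as $\tilde{\bx}_0$, it also has period $pT^*$ in $\mathbb{Z}_{p^{k+1}}^{mn}$, so Lemma~\ref{le:3:xe+1c} (with $k'=k$) gives
\[
T_{k+1}\bigl(\bA^i(\tilde{\bx}_0)+p^k\mathbf{d}\bigr)=\lcm\bigl(pT^*,\,T_1(\mathbf{d})\bigr),
\]
a quantity depending only on $\mathbf{d}$ and not on $i$.

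The counting step is then mechanical. For a prescribed length $T$, the number of pairs $(i,\mathbf{d})$ whose element has period $T$ equals $T^*\cdot a(T^*,T)$ by~\eqref{eq:3a:aTT}, and since $\bA$ permutes $S_{k+1}(\bx_0)$ each length-$T$ cycle contains exactly $T$ period-$T$ points; dividing yields $N(T,S_{k+1}(\bx_0))=T^*a(T^*,T)/T$.

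The main obstacle will be verifying the non-degeneracy hypothesis $\bx_0\in\mathbb{Z}_{p^{k'}}^{\times}$ required by Lemma~\ref{le:3:xe+1c}: one must confirm that every $\bA^i(\tilde{\bx}_0)$ meets this condition, and dispose of the degenerate case $\bx_0\in(p)$ separately, e.g.\ by invoking Lemma~\ref{le:3:x(p)iso} to reduce to a smaller modulus. A secondary point is to confirm that $S_{k+1}(\bx_0)$ is genuinely $\bA$-invariant---which follows because reduction modulo $p^k$ intertwines the two copies of $\bA$ and $C_k(\bx_0)$ is already an $\bA$-orbit---so that counting by period and counting by cycle are interchangeable.
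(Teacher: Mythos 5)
Your proposal is correct and takes essentially the same route as the paper's own proof: both decompose $S_{k+1}(\bx_0)$ into lifts $\hat{\bx}+p^k\mathbf{c}$ of the $T^*$ points of $C_k(\bx_0)$, use Lemma~\ref{le:3:xe+1c} (together with Lemma~\ref{le:3:xe+1}, since $T^*>P_1(f)$ forces $k>k_{\rm s}(\bx_0)$) to identify each resulting cycle length as $\lcm(pT^*,T_1(\mathbf{c}))$, and then divide the count $a(T^*,T)\,T^*$ of period-$T$ states by $T$. The only blemish is your claim that $T_{k+1}(\tilde{\bx}_0)=pT^*$ for \emph{any} lift $\tilde{\bx}_0$: Lemma~\ref{le:3:xe+1} guarantees this only for the canonical lift, since a lift $\bx_0+p^k\mathbf{d}$ has period $\lcm(pT^*,T_1(\mathbf{d}))$, which can exceed $pT^*$ when $T_1(\mathbf{d})\nmid pT^*$; this is harmless because your argument needs just one lift of period $pT^*$, and your flagged worry about the case $\bx_0\in(p)$ is equally passed over in the paper's own proof.
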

\begin{proof}
    From Lemma~\ref{le:3:xe+1c}, one obtains $|C_{k+1}(\bx_0+p^k \mathbf{c})|=T$ if and only if $\lcm(pT^*, T_1(\mathbf{c}))=T$. 
    Hence 
    \begin{multline*}
    |\{\bx\vvert \bx\in S_{k+1}(\bx_0), |C_{k+1}(\bx)|=T\}|=a(T^*, T)\cdot T^*.
    \end{multline*}
    It follows that 
    \begin{equation*}
            N(T, S_{k+1}(\bx_0))=\frac{a(T^*, T)T^*}{T}.
    \end{equation*}
    Then this lemma holds.
\end{proof}

Given an initial state $\bx_0$, if $k_{\rm s}(\bx_0)\rightarrow \infty$, one has $T_k(\bx_0)=T_1(\bx_0)$, that is, $\alpha_{i,k}^{T_1(\alpha_1)}= 1$.
Since any root of $f(t)$ has Cauchy's Bound, when $k$ is large enough, there is $\alpha_{i,k+l}=\hat{\alpha}_{i}$ for any $l$, where $\hat{\alpha}_{i}$ is a root of $f(t)$ in $\mathbb{N}$. If $\hat{\alpha}_{i}\not\in\{1,-1\}$, $\alpha_{i,k}^s\neq 1$ in $\mathbb{N}$ for any $s$. Then there is a $k'$ that $\alpha_{i,k}^{T_1(\alpha_1)}\neq 1$ in $\mathbb{Z}_{p^{k'}}$. So if $f(1)\neq 0$ and $f(-1)\neq 0$, referring to lemma~\ref{le:3A:ksx0}, for any $\bx_0$, $k_{\rm s}(\bx)$ is finite.

\begin{theorem}\label{th:3A:Nkv}
There exists 
\begin{equation}\label{eq:the:Nkv}
    \mathbf{N}_{k+l}(v+l)= \bD^l\cdot \mathbf{N}_{k}(v)
\end{equation}
for any $v>\bnu(P_1(f))$, where ${\bf N}_k(v)$ is a column vector of size $r\times 1$ with the $i$-th component $N_k(T_{k, i}(v), \mathbb{Z}^{mn}_{p^k})$, 
$\{T_{k, i}(v)\}_{i=1}^r= \{T\vvert \bnu(T)=v, T\mid P_k(f)\}$ and $T_{k, i}(v)<T_{k, i+1}(v)$, 
$\bD$ is a $r\times r$ lower triangular matrix, and $l\in \mathbb{N}^+$.
\end{theorem}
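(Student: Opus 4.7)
The plan is to reduce the claim to a single-step identity $\mathbf{N}_{k+1}(v+1)=\mathbf{D}\,\mathbf{N}_{k}(v)$ with a $k$-independent, lower-triangular matrix $\mathbf{D}$, then iterate $l$ times. First, I would parametrize the admissible periods: writing $P_1(f)=p^{\alpha}Q$ with $\gcd(Q,p)=1$ and invoking Lemma~\ref{le:3A:P_kfp}, one obtains $P_k(f)=p^{k-k_{\rm s}+\alpha}Q$ for $k\ge k_{\rm s}$, so the valuation-$v$ divisors of $P_k(f)$ are exactly $\{p^{v}d_i\}_{i=1}^{r}$, where $d_1<d_2<\cdots<d_r$ enumerates the divisors of $Q$. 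Consequently $r$ does not depend on $k$, and the bijection $T_{k,i}(v)\leftrightarrow T_{k+1,i}(v+1)=p\cdot T_{k,i}(v)$ furnishes compatible indexings for $\mathbf{N}_k(v)$ and $\mathbf{N}_{k+1}(v+1)$.

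Next, I would establish the cycle correspondence across adjacent levels. The reduction map $\pi:\Zp{k+1}^{mn}\to\Zp{k}^{mn}$ is $\bA$-equivariant, so each cycle at level $k+1$ of length $T_{k+1,j}(v+1)$ sits over a unique cycle at level $k$; by Lemma~\ref{le:3:xe+1} the base cycle has length $T_{k+1,j}(v+1)$ or $T_{k+1,j}(v+1)/p$. The hypothesis $v>\bnu(P_1(f))$ rules out the former: for any $\bx_0$ with $\bnu(T_k(\bx_0))=v$, iterating Lemma~\ref{le:3:xe+1} gives $\bnu(T_k(\bx_0))=k-k_{\rm s}(\bx_0)+\bnu(T_1(\bx_0))$, and combined with $\bnu(T_1(\bx_0))\le\bnu(P_1(f))<v$ this forces $k>k_{\rm s}(\bx_0)$, whence $T_{k+1}(\bx_0)=p\,T_k(\bx_0)$. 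Therefore every valuation-$(v+1)$ cycle at level $k+1$ projects to a valuation-$v$ cycle at level $k$, and conversely each valuation-$v$ cycle at level $k$ lifts to cycles of valuation exactly $v+1$.

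The third step is a count. Fix a cycle $C$ of length $T^{*}=T_{k,i}(v)$ with $\bx_0\in C$; Lemma~\ref{le:3A:N=a} yields
\[
N\bigl(T_{k+1,j}(v+1),\,S_{k+1}(\bx_0)\bigr)=\frac{a(T^{*},T_{k+1,j}(v+1))\,T^{*}}{T_{k+1,j}(v+1)}.
\]
Writing $T_1(\mathbf{c})=p^{\beta(\mathbf{c})}e(\mathbf{c})$ with $\beta(\mathbf{c})\le\alpha$ and $e(\mathbf{c})\mid Q$, the condition $v+1>\alpha\ge\beta(\mathbf{c})$ forces $\lcm(pT^{*},T_1(\mathbf{c}))=p^{v+1}\lcm(d_i,e(\mathbf{c}))$, so
\[
a(T^{*},T_{k+1,j}(v+1))=\bigl|\{\mathbf{c}\in\mathbb{Z}_p^{mn}\mid\lcm(d_i,e(\mathbf{c}))=d_j\}\bigr|,
\]
a quantity depending only on $Q$, the $d_i$'s, and the fixed distribution $\mathbf{c}\mapsto T_1(\mathbf{c})$ on $\mathbb{Z}_p^{mn}$; in particular it is independent of $k$. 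Denoting the resulting per-cycle count by $D_{j,i}$ and summing over all $N_k(T_{k,i}(v),\Zp{k}^{mn})$ base cycles (using the correspondence from Step~2) yields the single-step identity $\mathbf{N}_{k+1}(v+1)=\mathbf{D}\,\mathbf{N}_{k}(v)$.

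Lower-triangularity is immediate: $\lcm(d_i,e(\mathbf{c}))=d_j$ forces $d_i\mid d_j$, hence $d_i\le d_j$ and $i\le j$, so $D_{j,i}=0$ whenever $j<i$. Iterating the one-step identity $l$ times then delivers the claimed $\mathbf{N}_{k+l}(v+l)=\mathbf{D}^{l}\mathbf{N}_k(v)$. I expect the principal obstacle to lie in Step~2, namely verifying that each valuation-$(v+1)$ cycle at level $k+1$ arises exactly once as a lift from a valuation-$v$ cycle at level $k$, so that the sum over base cycles is both exhaustive and disjoint; this relies on the fine control of $k_{\rm s}(\bx_0)$ provided by Lemma~\ref{le:3A:ksx0} and on the valuation bookkeeping above to uniformly exclude the ``period stays the same'' branch of Lemma~\ref{le:3:xe+1}.
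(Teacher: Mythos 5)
Your proposal is correct and follows essentially the same route as the paper's own proof: both reduce the claim to the one-step identity $\mathbf{N}_{k+1}(v+1)=\bD\,\mathbf{N}_{k}(v)$ by combining the valuation bookkeeping from Lemmas~\ref{le:3:xe+1} and~\ref{le:3:xe+1c} (using $v>\bnu(P_1(f))$ to force the period to multiply by exactly $p$ at each lift), the per-cycle lift count of Lemma~\ref{le:3A:N=a} with the same $k$-independent quantity $a_{ij}=|\{\mathbf{c}:\lcm(T_{1,i},T_1(\mathbf{c})/p^{\bnu(T_1(\mathbf{c}))})=T_{1,j}\}|$, lower triangularity from the divisibility constraint, and iteration. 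If anything, your version is slightly more careful than the paper's writeup, since you retain the factor $T^{*}/T_{k+1,j}(v+1)=d_i/(p\,d_j)$ in the per-cycle count where the paper abbreviates it to $a_{ij}/p$.
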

\begin{proof}
Let $T_{1,i}=T_{1,i}(0)$, since the definition of $T_{k,i}(v)$, one has $T_{k,i}(v)=p^v T_{1,i}$.
For any $i\in\{1,2,\cdots, r\}$,  
there is a set 
\begin{equation}\label{eq:3A:SkTki}
    S_k(T_{k,i}(v))=S_k(p^vT_{1,i})=\{\bx \vvert \bx\in\mathbb{Z}^{mn}_{p^k} ,|C_k(\bx)|=p^vT_{1,i}\}.
\end{equation}
    For any $\bx\in \bigsqcup_{j=1}^r S_k(p^vT_{1,i})$, it follows from Lemma~\ref{le:3:xe+1c} that $\bnu(T_{k}(\bx))=\bnu(T_{k,i}(v))=v$, where $\bigsqcup$ is the  disjoint union operator. Combining Lemma~\ref{le:3:xe+1} with Lemma~\ref{le:3:xe+1c}, for any $\bx'\in\mathbb{Z}^{mn}_{p^{k+1}}$ and $\bx'\equiv\bx_0\pmod{p^k}$, one has $\bnu(T_{k+1}(\bx'))=v+1$. Then $\bx' \in \bigsqcup_{j=1}^r S_{k+1}(p^{v+1}{T}_{1,j})$. Similarly, for any $\bx'\in\bigsqcup_{j=1}^r S_{k+1}(p^{v+1}{T}_{1,j})$, there is $\bx=\bx'\pmod{p^k}$ such that $\bx\in\bigsqcup_{j=1}^r S_k(p^vT_{1,i})$, that is, 
    \begin{equation}\label{eq:3A:Sk+1capcap}
            \bigsqcup_{i=1}^r S_{k+1}(p^{v+1}T_{1,i})=\bigcup_{i=1}^r S'(p^vT_{1,i}),
    \end{equation}
    where $S'(p^vT_{1,i})=\{\bx\vvert C_k(\bx)=p^vT_{1,i}, \bx\in\mathbb{Z}_{p^{k+1}}^{mn}\}$.
    
    From Lemma~\ref{le:3A:N=a}, for any $\bx_0$ satisfying $T_k(\bx_0)=p^vT_{1,i}$, there is 
    \begin{equation}\label{eq:3a:N=bij}
    N(p^{v+1}T_{1,j}, S_{k+1}(\bx_0))=b_{i,j},
    \end{equation}
where $b_{i,j}=\frac{a_{ij}}{p}$ and
    \begin{equation}\label{eq:3a:aij}
    \begin{split}
        a_{ij}&=a(p^kT_{1,i}, p^{v+1}T_{1,j})\\
        &=\left|\left\{\mathbf{c} \Bigg \vert  \mathbf{c}\in\mathbb{Z}^{mn}_p,\lcm\left(T_{1,i}, \frac{T_1(\mathbf{c})}{p^{\bnu(T_1(\mathbf{c}))}}\right)=T_{1,j}\right\}\right|.
    \end{split}
\end{equation}
Since $S_{k+1}(\bx_0)\in S'(p^vT_{1,i})$ for any $\bx_0$ satisfying $T_k(\bx_0)=p^vT_{1,i}$,
one has
    \[
    N\left(p^{v+1}T_{1,j}, S'(p^vT_{1,i})\right)=p\cdot a_{ij} N(p^vT_{1,i}, \mathbb{Z}_p^k).
    \]
Referring to ~\eqref{eq:3A:Sk+1capcap}, one obtains 
    \begin{equation*}
    \begin{split}
        N\left(p^{v+1}T_{1,j}, \mathbb{Z}_{p^{k+1}}^{mn}\right)&=N\left(p^{v+1}T_{1,j}, \bigcup_{i=1}^r S'(p^vT_{1,i})\right)\\
        &=\frac{1}{p}\sum_{i=1}^r a_{ij} N(p^vT_{1,i}, \mathbb{Z}_p^k).
    \end{split}
\end{equation*}
Hence
${\bf N}_{k+1}(v+1)=\mathbf{D}{\bf N}_{k}(v)$.
When $i<j$, it follows from $T_{1,i}<T_{1,j}$ that $T_{1,j}\nmid T_{1,i}$, then $a_j=0$ and $b_{i,j}=0$.
Hence
\[
\mathbf{D}=
\begin{bmatrix}
    b_{1,1} & 0& \cdots,&0\\
    b_{1,2} & b_{2,2}& \cdots,&0\\
    \vdots  & \vdots & \ddots &\vdots\\
    b_{1,r} & b_{2,r}& \cdots,&b_{r,r}
\end{bmatrix}
\].
It follows from ~\eqref{eq:3a:aij} that $b_{i,j}$ is constant depending on $i$ and $j$.
So ~\eqref{eq:the:Nkv} holds.
\end{proof}


\begin{theorem}\label{th:3A:fe*}
If $f(1)\neq 0$ and $f(-1)\neq 0$, then there exists a threshold $\hat{k}_{\rm s}=\max\{k_{\rm s}(\bx) \vvert \bx\in\Zp{}^{mn}, \bx\neq \bo\}$ such that
for $k\ge \hat{k}_{\rm s}$, 
    \begin{equation}\label{prop:Ntce2}
	  N_{T, \hat{k}_{\rm s}+\bnu(T)+l}=N_{T, \hat{k}_{\rm s}+\bnu(T)},
	\end{equation}
where $l\in \mathbb{N}^+$.
\end{theorem}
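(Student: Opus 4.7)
The plan is to decompose $G_{p^k}$ into the subgraph supported on $p\Zp{k}^{mn}$ and its complement, and to show that for $k>\hat{k}_{\rm s}+\bnu(T)$ every cycle of length $T$ lies in the former. Iterated application of the isomorphism of Lemma~\ref{le:3:x(p)iso} then collapses the count onto level $\hat{k}_{\rm s}+\bnu(T)$, which is precisely the stabilization claimed.

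First, since the hypothesis $t\nmid f(t)$ makes $\bM$ invertible modulo $p$, any orbit of $\bA$ is either entirely contained in $p\Zp{k}^{mn}$ or entirely disjoint from it. Writing $N^{\neq 0}_{T,k}$ for the number of length-$T$ cycles in the complement and applying Lemma~\ref{le:3:x(p)iso} to identify length-$T$ cycles in $p\Zp{k}^{mn}$ with those of $G_{p^{k-1}}$, I obtain the decomposition
\[
N_{T,k}=N_{T,k-1}+N^{\neq 0}_{T,k}
\]
for every $k\ge 2$.

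Second, I would bound the levels at which $N^{\neq 0}_{T,k}$ can be positive. Iterating Lemma~\ref{le:3:xe+1} yields $T_k(\bx)=p^{k-k_{\rm s}(\bx)}T_1(\bx)$ for every $\bx\notin(p)$ and every $k\ge k_{\rm s}(\bx)$. The assumptions $f(1)\neq 0$ and $f(-1)\neq 0$, combined with the Cauchy-bound argument immediately preceding the theorem, make every $k_{\rm s}(\bx)$ finite, so the maximum $\hat{k}_{\rm s}$ is finite. Moreover, $k_{\rm s}(\bx)$ depends only on the residue $\bx\bmod p$, by Lemma~\ref{le:3A:ksx0}, so $k_{\rm s}(\bx)\le\hat{k}_{\rm s}$ uniformly over $\bx\notin(p)$. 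If $T_k(\bx)=T$ for some such $\bx$ and some $k\ge\hat{k}_{\rm s}$, then comparing $p$-adic valuations gives
\[
\bnu(T)=(k-k_{\rm s}(\bx))+\bnu(T_1(\bx))\ge k-\hat{k}_{\rm s},
\]
so that $k\le\hat{k}_{\rm s}+\bnu(T)$ and therefore $N^{\neq 0}_{T,k}=0$ whenever $k>\hat{k}_{\rm s}+\bnu(T)$.

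Combining these two observations yields $N_{T,k}=N_{T,k-1}$ for every $k\ge\hat{k}_{\rm s}+\bnu(T)+1$, and a straightforward induction on $l$ then produces the desired identity $N_{T,\hat{k}_{\rm s}+\bnu(T)+l}=N_{T,\hat{k}_{\rm s}+\bnu(T)}$ for all $l\in\mathbb{N}^+$. The main delicacy of the argument is not any single hard step but the valuation bookkeeping in the second paragraph: everything hinges on the uniform bound $k_{\rm s}(\bx)\le\hat{k}_{\rm s}$, which is exactly where the hypothesis on $f(\pm 1)$ is invoked to ensure $\hat{k}_{\rm s}<\infty$.
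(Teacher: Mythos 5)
Your proof is correct and follows essentially the same route as the paper's: both use Lemma~\ref{le:3:xe+1} together with the finiteness of $\hat{k}_{\rm s}$ (guaranteed by $f(1)\neq 0$, $f(-1)\neq 0$ and the Cauchy-bound discussion) to show that any cycle through a state outside $(p)$ at level $k$ has period with $p$-adic valuation at least $k-\hat{k}_{\rm s}$, and then invoke Lemma~\ref{le:3:x(p)iso} to identify all remaining length-$T$ cycles with those of $G_{p^{k-1}}$ and iterate. Your explicit decomposition $N_{T,k}=N_{T,k-1}+N^{\neq 0}_{T,k}$ and the valuation computation simply make the paper's bookkeeping more transparent.
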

\begin{proof}
If $f(1)\neq 0$ and $f(-1)\neq 0$, there is a finite integer $\hat{k}_{\rm s}$.
When $k>\hat{k}_{\rm s}$, for any $\bx\in\{\bx \vvert \bx\in\Zp{k}^{mn},\bx\not\equiv\bo\pmod{p}\}$,
there is $\bnu(T_k(\bx))\geq {k-\hat{k}_{\rm s}}$. It follows from Lemma~\ref{le:3:x(p)iso} that $N_{T, k}=N_{T, k-1}$ for any $T$ satisfying $\bnu(T)<{k-\hat{k}_{\rm s}}$. Further, it can be obtained that 
\[
N_{T, k+l}=N_{T, k-1}
\]
for any $T$ satisfying $\bnu(T)<{k-\hat{k}_{\rm s}}$.
Since Lemma~\ref{le:3:xe+1} and the definition of $\hat{k}_{\rm s}$, when $k=\hat{k}_{\rm s}+\bnu(T)+1$, there is $\bnu(T)<k-\hat{k}_{\rm s}$. Hence 
~\eqref{prop:Ntce2} holds.
\end{proof}

\newcommand{\minitab}[2][l]{\begin{tabular}{#1}#2\end{tabular}}
\setlength\tabcolsep{2pt} 
\addtolength{\abovecaptionskip}{0pt}
\renewcommand{\arraystretch}{1.2}

\begin{table*}[!htb]
\caption{Conditions on $(a, b)$ and the corresponding number of valid cases $N_T$ for each possible period $T$ of Cat map~\eqref{eq:ArnoldInteger} over $\mathbb{Z}_{p^k}$.}
	\centering 
	\begin{tabular}{*{2}{c|}c} 
		\hline 
		$T$  & Condition &  $N_T$ \\ \hline		
		{1  }   & $(a,b)=(0,0)$ &   1   \\ \hline
       \multirow{2}{*}{$p^k$}    &$ab\equiv0\pmod{p}, a\not\equiv0\pmod{p}$&   $p^{2e-2}(p-1)$            \\ \cline{2-3}
       &$ab\equiv0\pmod{p}, b\not\equiv0\pmod{p}$&   $p^{2e-2}(p-1)$            \\
        \hline		
        {$2 p^k$}      & $ab\equiv p-4\pmod{p}$& $p^{2e-2}(p-1)$   \\ \hline
		\multirow{2}{*}{\minitab[c]{$p^i$,\\ $i\in\{1, 2, \ldots, e-1\}$} }	
        & $\min(\bnu(a), \bnu(b))=p^{k-i}$& $2(p-1)(p^{2i-1})$\\ \cline{2-3}
		          & $ab=0, \max(\bnu(a),\bnu(b))=p^{k-i}$& $2(p-1)(p^{i-1})$    \\   \hline
		\makecell{\minitab[c]{$k_1\cdot k_2$, \\ $k_1>2$, $k_1 \mid p-1$, $k_2 \mid p^{e-1}$}} & $a,b\in\mathbb{Z}^\times_{p^k}  $&      $\frac{\varphi(k_1 k_2)}{2}(p^k-p^{k-1})$    \\  \hline
		\makecell{\minitab[c]{\minitab[c]{$k_1\cdot k_2$, \\ $k_1>2$, $k_1 \mid p+1$, $k_2 \mid p^{e-1}$}}}		  &$a,b\in\mathbb{Z}^\times_{p^k}  $ & $\frac{\varphi(k_1 k_2)}{2}(p^k-p^{k-1})$     \\ \hline 	
	\end{tabular}
	\label{tab:ParametersCat}
\end{table*}

\subsection{The Functional Graph of Cat Map over $(\Zp{k}, +, \ \cdot\ )$}

In \cite{Chen:cat:TIT2012}, F. Chen et al. enumerates all Cat maps over $(\mathbb{Z}_{p^k}, +, \ \cdot\ )$ by their periods
using the generation function and the Hensel lifting techniques, where $p$ is a prime number larger than $3$.
To make this paper more self‐contained, Table~\ref{tab:ParametersCat} concisely lists the explicit $(a, b)$ pairs that 
yield each attainable period. The paper is unrelated to Cat map’s local cycle structure, as the period of any permutation on a finite domain is the least common multiple of all its cycle lengths.
Adopting the analytical framework developed above, we fill this gap by deriving the exact distribution of cycle lengths for Cat maps over $(\Zp{k}, +, \ \cdot\ )$.

The Cat map over $(\Zp{k}, +, \ \cdot\ )$ can be represented as
\begin{equation}\label{eq:ArnoldInteger}
    f
    \begin{bmatrix}
        x_i\\
        y_i
    \end{bmatrix}
  =
        \begin{bmatrix}
        x_{i+1}\\
        y_{i+1}
    \end{bmatrix}
 ={\bf C}
    \begin{bmatrix}
        x_i\\
        y_i
    \end{bmatrix}
    \bmod {p^k}, 
\end{equation}
where
\[
{\bf C}=
    \begin{bmatrix}
        1 & a\\
        b & 1+a\cdot b
    \end{bmatrix}, 
\]
$x_i, y_i\in \Zp{k}$ and $a, b\in\Zp{k}$.
when $a,b = 0$, one has ${\bf C}={\bf I}$, then the minimal polynomial of the Cat map is $t-1$. 
When $a, b\neq 0$, one has ${\bf C}\neq c\cdot {\bf I}$ for any integer $c$, namely the degree of the minimal polynomial is more than one. Since
\[
{\bf C}^2=(ab+2) {\bf C}-{\bf I}, 
\]
that is, 
\[
    \begin{bmatrix}
        x_{i+2}\\
        y_{i+2}
    \end{bmatrix}
 =(ab+2)
    \begin{bmatrix}
        x_{i+1}\\
        y_{i+1}
    \end{bmatrix}-
    \begin{bmatrix}
        x_i\\
        y_i
    \end{bmatrix}
\]
and $\{(x_i, y_i)^\intercal\}_{i=0}^{\infty}$ is a periodic sequence.
Hence, the minimal polynomial of the Cat map is $f(t)=t^2-(ab+2)t+1$. 
It is similar to \eqref{eq:3:ma=} that
\begin{equation*}
    \mathbf{A}^i(\mathbf{z}_0)=\bM^i
\begin{bmatrix}
    x_0 \\
    y_0 \\
    x_1 \\
    y_1
\end{bmatrix}
=\begin{bmatrix}
    x_i \\
    y_i \\
    x_{i+1} \\
    y_{i+1}
\end{bmatrix}, 
\end{equation*}
where
\[
\bM=
\begin{bmatrix}
    0     & 0     & 1  & 0\\
    0     & 0     & 0  & 1\\
    ab+2  & 0     & -1 & 0\\
    0     & ab+2  & 0  & -1
\end{bmatrix}.
\]

Just like \cite{cqli:Cat:TC22}, let $z_i=x_i+p^e\cdot y_i$ represent the state in the functional graph of the Cat map. Then, the state in the functional graph of the Cat map can be represented by $(z_i, z_{i+1})$.
There is a distinction between $\mathbf{A}(x)$ and the Cat map. For $\mathbf{A}(x)$, the number of states in the functional graph of $\mathbf{A}(x)$ is $p^{4e}$ in $\Zp{k}$. But for the Cat map, because $z_{i+1}$ is unique for a given $z_{i+1}$, the number of state of $z_i$ in $\Zp{k}$ is $p^{2e}$.
Denote $G_e$ as the functional graph of the Cat map with precision $e$. It is isomorphic to the subgraph of the functional graph of $\mathbf{A}(x)$ with precision $e$. As illustrated in Fig.~\ref{Fig:CandA}, for any state $z_0$ in Fig.~\ref{Fig:CandA}, there is a unique $z_1$ as depicted in Fig.~\ref{Fig:CandA}a) and the cycle include state $(z_0, z_1)$ in Fig.~\ref{Fig:CandA}b) is isomorphic to the cycle include $z_i$ in Fig.~\ref{Fig:CandA}a).
Building upon the discourse surrounding $\mathbf{A}(x)$ in Sec.~\ref{subsec:3A}, several properties of the functional graph of the Cat map over $(\Zp{k}, +, \ \cdot\ )$ are proposed.

\begin{figure}[!htb]
    \centering
     \raisebox{-2.1em}{\begin{minipage}{0.8\twofigwidth}
      \centering
     \includegraphics[width=0.8\twofigwidth]{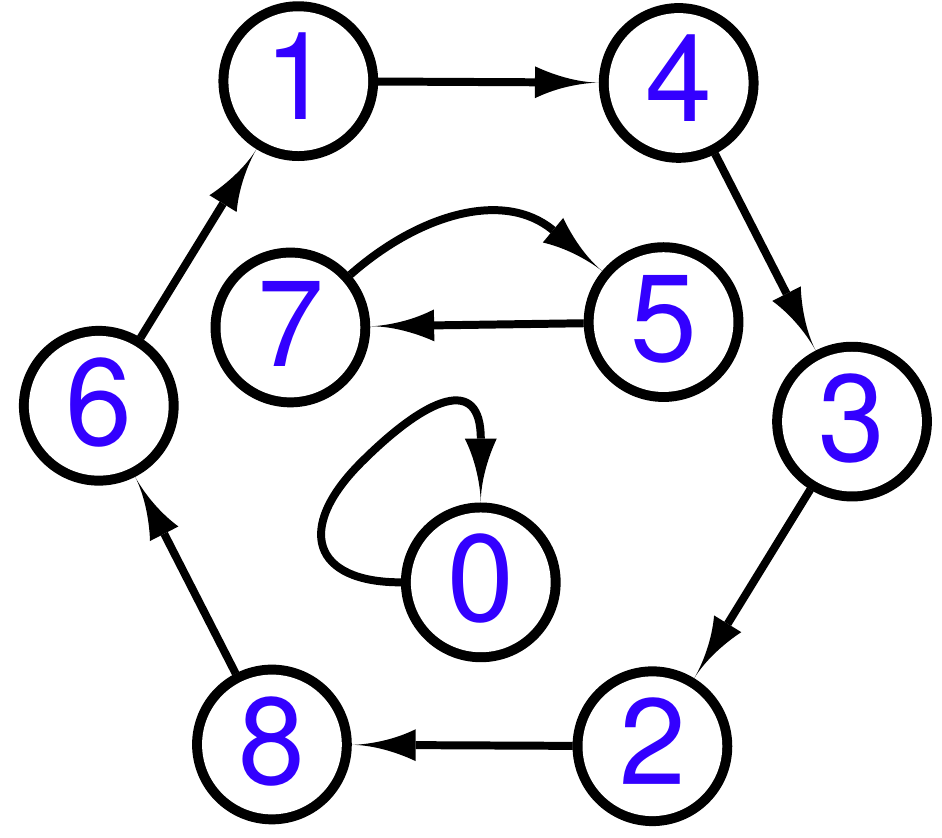}
     a)
     \end{minipage}}
     \begin{minipage}{1.2\twofigwidth}
        \centering
     \includegraphics[width=1.2\twofigwidth]{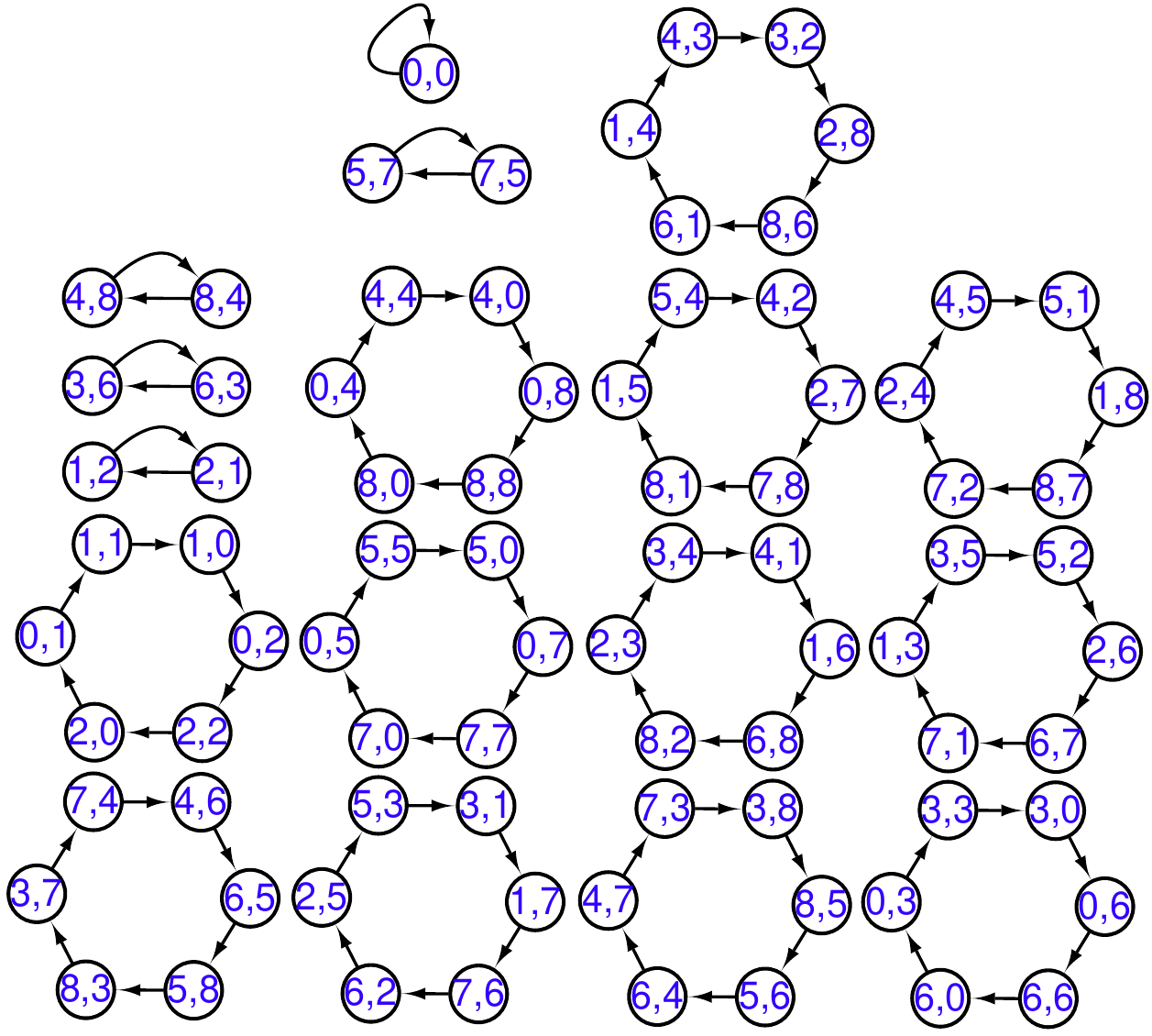}
    b)
     \end{minipage}
\caption{The functional graph of two maps over $\mathbb{Z}_3$: 
a) Cat map $(\ref{eq:ArnoldInteger})$ with parameter $(a, b)=(1, 5)$; 
b) $\mathbf{A}(\mathbf{z})$ with minimal polynomial $f(t)=t^2-7t+1$.}
\label{Fig:CandA}
\end{figure}

\begin{theorem}\label{le:3:cat1}
When $a,b$ are not both zero, the least period of the Cat map in $\Zp{k}$ is ${T}_k$ and there exists a threshold of $k$, 
\begin{equation}\label{eq:Cat:kf}
 k_{\rm s}=
 \begin{cases}
     1             & \mbox{if } \alpha=\alpha^{-1};\\
 f_{\rm k}(\alpha) & \mbox{if } \alpha\neq\alpha^{-1},  
 \end{cases}
\end{equation}
satisfying
\begin{equation}\label{eq:3B:Tcxe=pe*}
    T_{k+1}=p^{k-k_{\rm s}}\cdot T_{k_{\rm s}}
\end{equation}
when $k\geq k_{\rm s}$, 
where $f(t)=t^2-(ab+2)t+1$, $\alpha$ is a root of $f(t)$ in $\mathbb{R}_1$, 
\begin{equation*}
f_{\rm k}(\alpha)=\max\left\{ k \vvert \ord_{k}\left(g^{(k-1)}(\alpha)\right)=\ord_1(\alpha) \right\}, 
\end{equation*}
and
$g^{(n)}(x)$ indicate applying 
\begin{equation*}
g(x)=x-f'(x)^{-1}f(x)
\end{equation*}
over $\mathbb{R}_k$ $n$ times.
\end{theorem}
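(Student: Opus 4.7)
The strategy is to recognize the Cat map as the canonical instance of the general framework built in Section~\ref{sec:main}, specialized to the minimal polynomial $f(t)=t^2-(ab+2)t+1$ that the excerpt derives via the Cayley--Hamilton identity $\mathbf{C}^2=(ab+2)\mathbf{C}-\mathbf{I}$. Because the constant term of $f$ is $1$, Vieta's formulas force the two roots in $\mathbb{R}_1$ to multiply to $1$; if $\alpha$ is one root, then the other is $\alpha^{-1}$. Hence the dichotomy $\alpha=\alpha^{-1}$ versus $\alpha\neq\alpha^{-1}$ in~\eqref{eq:Cat:kf} coincides exactly with the dichotomy between $f$ having a double root in $\mathbb{R}_1$ (equivalently, $\alpha\in\{\pm 1\}$, which happens precisely when the discriminant $ab(ab+4)$ vanishes modulo $p$) and $f$ splitting into two distinct simple roots.

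First I would invoke Lemma~\ref{le:2:T=P_e} followed by Lemma~\ref{le:3:orderZ=R} to translate the least period into the order of the minimal polynomial, giving $T_k=P'_k(f)=P_k(f)$. The theorem then reduces to computing $P_k(f)$ as a function of $k$, which is exactly what Theorem~\ref{coro:2:e>1b=1} and Lemma~\ref{le:3A:P_kfp} deliver once the threshold $k_{\rm s}$ is identified. In the double-root case, the product of multiplicities is $\prod a_i=2>1$, so Theorem~\ref{coro:2:e>1b=1} gives $k_{\rm s}=1$ directly. In the simple-root case $\prod a_i=1$ and Theorem~\ref{coro:2:e>1b=1} (equivalently Lemma~\ref{le:3:hensem3}) yields $k_{\rm s}=\min\{f_{\rm k}(\alpha),f_{\rm k}(\alpha^{-1})\}$; to match the sharper form $k_{\rm s}=f_{\rm k}(\alpha)$ claimed in~\eqref{eq:Cat:kf}, I must show that these two quantities are equal.

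To establish that symmetry, I would argue through uniqueness of Hensel lifts. By the quadratic convergence of the Newton iteration~\eqref{eq:2:alphai+1} (valid because $f'(\alpha)$ is a unit at a simple root), the iterate $g^{(k-1)}(\alpha)$ satisfies $f(g^{(k-1)}(\alpha))\equiv 0\pmod{p^{2^{k-1}}}$, and since $2^{k-1}\ge k$ for every $k\ge 1$, it coincides in $\mathbb{R}_k$ with the unique Hensel lift $\alpha_k$ of $\alpha$. The analogous statement applied to $\alpha^{-1}$ gives $g^{(k-1)}(\alpha^{-1})=\beta_k$, the unique lift of $\alpha^{-1}$. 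Now $\alpha_k$ is a unit of $\mathbb{R}_k$ and, because $f$ is reciprocal (i.e., $t^2 f(1/t)=f(t)$), the element $\alpha_k^{-1}$ is itself a root of $f$ that reduces to $\alpha^{-1}$ modulo $p$; by uniqueness, $\beta_k=\alpha_k^{-1}$. Hence $\ord_k(g^{(k-1)}(\alpha^{-1}))=\ord_k(\alpha_k^{-1})=\ord_k(\alpha_k)=\ord_k(g^{(k-1)}(\alpha))$ for every $k$, and definition~\eqref{eq:3:ealphaj} yields $f_{\rm k}(\alpha)=f_{\rm k}(\alpha^{-1})$, so the minimum in Theorem~\ref{coro:2:e>1b=1} collapses to $f_{\rm k}(\alpha)$.

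With $k_{\rm s}$ identified, the growth relation~\eqref{eq:3B:Tcxe=pe*} follows by substituting into Lemma~\ref{le:3A:P_kfp}: for $k\ge k_{\rm s}$ one has $T_k=P_k(f)=p^{k-k_{\rm s}}P_1(f)=p^{k-k_{\rm s}}T_{k_{\rm s}}$. The main obstacle I anticipate is the equality $f_{\rm k}(\alpha)=f_{\rm k}(\alpha^{-1})$: although intuitively forced by the reciprocal structure of $f$, a clean justification requires combining the quadratic-convergence estimate for Newton's iteration with Hensel uniqueness in the Galois ring $\mathbb{R}_k$ (rather than a field), which one must verify carefully to avoid circular appeals to the lemmas being used. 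Everything else in the argument is a direct instantiation of the general lemmas already proved.
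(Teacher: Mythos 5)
Your proposal is correct and follows the same overall route as the paper: identify $f(t)=t^2-(ab+2)t+1$ as the minimal polynomial via Cayley--Hamilton and specialize Theorem~\ref{coro:2:e>1b=1}. In fact, the paper's entire proof consists of exactly that two-sentence citation; it never addresses the point you rightly flag as the main obstacle, namely that Theorem~\ref{coro:2:e>1b=1} yields $k_{\rm s}=\min\{f_{\rm k}(\alpha),f_{\rm k}(\alpha^{-1})\}$ in the simple-root case, whereas the statement asserts $k_{\rm s}=f_{\rm k}(\alpha)$ for an arbitrary choice of root $\alpha$ --- an assertion that is not even well defined unless the two values coincide. Your symmetry argument closes this gap cleanly: the Newton iterate $g^{(k-1)}(\alpha)$ agrees with the unique Hensel lift $\alpha_k$ in $\mathbb{R}_k$ (quadratic convergence plus $f'(\alpha)$ being a unit), the reciprocal structure $t^2f(1/t)=f(t)$ forces the lift of $\alpha^{-1}$ to be $\alpha_k^{-1}$, and an element and its inverse have the same multiplicative order, so $f_{\rm k}(\alpha)=f_{\rm k}(\alpha^{-1})$. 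Thus your proof is a strict refinement of the paper's: same skeleton, but with the one nontrivial specialization step actually proved rather than implicitly assumed. One minor remark: your substitution into Lemma~\ref{le:3A:P_kfp} gives $T_k=p^{k-k_{\rm s}}T_{k_{\rm s}}$ for $k\ge k_{\rm s}$, which is the correct consequence; the paper's displayed equation~\eqref{eq:3B:Tcxe=pe*}, with $T_{k+1}$ on the left and $p^{k-k_{\rm s}}$ on the right, carries an off-by-one index (at $k=k_{\rm s}$ it would contradict the maximality defining $k_{\rm s}$), so you should not adjust your correct form to match it.
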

\begin{proof}
When $a\neq0$ or $b\neq0$, the minimal polynomial is $f(t)=t^2-(ab+2)t+1$.
According to Theorem~\ref{coro:2:e>1b=1}, this theorem holds.
\end{proof}

The Cat map as a kind of $\mathbf{P}(\mathbf{a})$ satisfies some properties:
\begin{itemize}
    \item For $\mathbf{P}(\mathbf{a})$ in $\mathbb{Z}_{p^k}^n$, there is a subset $\mathbb{S}_{p^k}^n$ of $\mathbb{Z}_{p^k}^n$ composed of all states of $\mathbf{P}(\mathbf{a})$. 
    \item As $k$ increases, for any $\mathbf{a}\in\mathbb{S}_{p^{k+1}}^n$, there is $\mathbf{c}$ such that $\mathbf{a}=\mathbf{a}_k+p^k\mathbf{c}$, where $\mathbf{a}_k\in\mathbb{S}_{p^{k}}^{mn}$. 
\end{itemize}
Let $\mathbb{C}^{mn}_p$ be the set composed of all states of $\mathbf{c}$. 
Similar to Lemma~\ref{le:3A:N=a} and Theorem~\ref{th:3A:Nkv}, the period distribution of a given state as $k$ to $k+1$ is disclosed in Corollary~\ref{coro:3b:PaNTSx0} and a property of the period distribution as $k$ increases is disclosed in Corollary~\ref{coro:3b:PaNTS}.
\begin{Corollary}\label{coro:3b:PaNTSx0}
    Given $T^*>P_1(f)$, for any $\mathbf{a}_0$ such that $T_k(\mathbf{a}_0)=T^*$, 
   there is $S_{k+1}(\mathbf{a}_0)=\{\mathbf{a}\vert (\mathbf{a} \bmod p^k)\in C_k(\mathbf{a}_0), \mathbf{a} \in\mathbb{S}_{p^{k+1}}^{mn}\}$
   and 
   \[
   N(T, S_{k+1}(\mathbf{a}_0))=\frac{a(T^*, T)T^*}{T},
   \]
   where
       \begin{equation}\label{eq:3a:aTT}
        a(T^*, T_1)=|\{\mathbf{c}\vvert \mathbf{c}\in\mathbb{C}^{n}_p,\lcm(pT^*, T_1(\mathbf{c}))=T\}|
    \end{equation}
    is a constant depending on $T^*$ and $T$.
\end{Corollary}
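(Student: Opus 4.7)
The corollary is a direct $\mathbf{P}$-version of Lemma~\ref{le:3A:N=a}, so my plan is to replay that proof after transporting it from the companion-matrix picture to the $\mathbf{P}$-picture via the embedding described between \eqref{eq:2:An2x} and \eqref{eq:3:ma=}. Concretely, the map $\mathbf{a}\mapsto \bx=(\mathbf{a}^\intercal,(\mathbf{P}(\mathbf{a}))^\intercal,\ldots,(\mathbf{P}^{m-1}(\mathbf{a}))^\intercal)^\intercal$ gives an orbit-preserving injection $\mathbb{S}^n_{p^k}\hookrightarrow\mathbb{Z}^{mn}_{p^k}$ whose image is $\mathbf{A}$-invariant, so in particular $T_k(\mathbf{a}_0)=T_k(\bx_0)=T^*$ and the cycle $C_k(\mathbf{a}_0)$ in $\mathbb{S}^n_{p^k}$ corresponds faithfully to $C_k(\bx_0)$.

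The first step is to verify the lifting identity $T_{k+1}(\mathbf{a}_0+p^k\mathbf{c})=\lcm\!\bigl(T_{k+1}(\mathbf{a}_0),\,T_1(\mathbf{c})\bigr)$ for every $\mathbf{c}\in\mathbb{C}^n_p$. Because the embedding sends $\mathbf{a}_0+p^k\mathbf{c}$ to a state of the form $\bx_0+p^k\hat{\mathbf{c}}$, Lemma~\ref{le:3:xe+1c} applies in the ambient $\mathbf{A}$-picture, and the resulting period formula descends to the $\mathbf{P}$-picture since periods are preserved by the embedding. Next, I would invoke the hypothesis $T^*>P_1(f)\ge T_1(\mathbf{a}_0)$: combined with Lemma~\ref{le:3:xe+1} this forces $k>k_{\rm s}(\mathbf{a}_0)$ and then, applying that lemma once more, $T_{k+1}(\mathbf{a}_0)=pT^*$. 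The same conclusion holds at every point of $C_k(\mathbf{a}_0)$ because the period is a cycle invariant. Substituting gives $T_{k+1}(\mathbf{a}_0+p^k\mathbf{c})=\lcm(pT^*,T_1(\mathbf{c}))$.

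The counting step is then routine. By the second bullet-point property of $\mathbf{P}$, every state of $S_{k+1}(\mathbf{a}_0)$ has a unique representation $\mathbf{a}'+p^k\mathbf{c}$ with $\mathbf{a}'\in C_k(\mathbf{a}_0)$ and $\mathbf{c}\in\mathbb{C}^n_p$. For each of the $T^*$ base points $\mathbf{a}'$, the previous paragraph identifies the lifts of period exactly $T$ with the set counted by $a(T^*,T)$. Hence $S_{k+1}(\mathbf{a}_0)$ contains $T^*\cdot a(T^*,T)$ states of period $T$, and dividing by the cycle length $T$ yields the asserted $N(T,S_{k+1}(\mathbf{a}_0))=a(T^*,T)T^*/T$.

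The main obstacle is the first step: one must check that Lemma~\ref{le:3:xe+1c}, proved in the ambient space $\mathbb{Z}^{mn}_{p^k}$ of the companion matrix $\bM$, really transfers when we restrict to the smaller state space $\mathbb{S}^n_{p^k}$ and to perturbations in $\mathbb{C}^n_p\subseteq\mathbb{Z}^n_p$. This amounts to confirming that the $\mathbf{P}$-dynamics on $\mathbb{S}^n_{p^{k+1}}$ is faithfully represented by the $\mathbf{A}$-dynamics on the embedded subset, so that the $\lcm$-formula descends without loss and neither over- nor under-counts the admissible perturbations.
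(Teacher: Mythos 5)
Your proposal is correct and follows essentially the same route as the paper: the paper's own proof of this corollary is just a pointer to Lemma~\ref{le:3A:N=a}, whose argument (apply Lemma~\ref{le:3:xe+1c} to get $T_{k+1}(\mathbf{a}_0+p^k\mathbf{c})=\lcm(pT^*,T_1(\mathbf{c}))$, then count states on each of the $T^*$ base points and divide by $T$) is exactly what you replay after transporting it through the embedding $\mathbf{a}\mapsto\bx$. Your explicit justification that $T^*>P_1(f)$ forces $k>k_{\rm s}(\mathbf{a}_0)$ and hence $T_{k+1}(\mathbf{a}_0)=pT^*$, and your attention to whether the $\lcm$-formula descends to the restricted state space $\mathbb{S}^{n}_{p^k}$, merely fill in details the paper leaves implicit.
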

\begin{proof}
    This proof is similar to Lemma~\ref{le:3A:N=a}.
\end{proof}

\begin{Corollary}\label{coro:3b:PaNTS}
There exists 
\begin{equation}\label{eq:the:Nkv2}
    \mathbf{N}_{k+l}(v+l)= \bD^l\cdot \mathbf{N}_{k}(v)
\end{equation}
for any $v>\bnu(P_1(f))$, where ${\bf N}_k(v)$ is a column vector of size $r\times 1$ with the $i$-th component $N_k(T_{k, i}(v), \mathbb{S}^{mn}_{p^k})$, 
$\{T_{k, i}(v)\}_{i=1}^r= \{T\vvert \bnu(T)=v, T\mid P_k(f)\}$ and $T_{k, i}(v)<T_{k, i+1}(v)$, 
$\bD$ is a $r\times r$ lower triangular matrix, and $l\in \mathbb{N}^+$.
\end{Corollary}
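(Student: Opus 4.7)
The plan is to repeat the argument of Theorem~\ref{th:3A:Nkv} almost verbatim, replacing the ambient set $\mathbb{Z}^{mn}_{p^k}$ by the image set $\mathbb{S}^{mn}_{p^k}$ of $\mathbf{P}$, and invoking Corollary~\ref{coro:3b:PaNTSx0} in place of Lemma~\ref{le:3A:N=a}. Concretely, for fixed $v>\bnu(P_1(f))$ I would write $T_{1,i}=T_{1,i}(0)$ so that $T_{k,i}(v)=p^v T_{1,i}$, and define the strata
\begin{equation*}
S_k(p^v T_{1,i})=\{\mathbf{a}\in\mathbb{S}^{mn}_{p^k}\,:\,|C_k(\mathbf{a})|=p^v T_{1,i}\}.
\end{equation*}
Lemma~\ref{le:3:xe+1} ensures $\bnu(T_k(\mathbf{a}))=v$ for every $\mathbf{a}\in\bigsqcup_i S_k(p^v T_{1,i})$, while Lemma~\ref{le:3:xe+1c} upgrades any lift $\mathbf{a}'\equiv\mathbf{a}\pmod{p^k}$ to $\bnu(T_{k+1}(\mathbf{a}'))=v+1$.

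Next I would exploit the structural hypothesis on $\mathbf{P}$ spelled out before Corollary~\ref{coro:3b:PaNTSx0}: every $\mathbf{a}'\in\mathbb{S}^{mn}_{p^{k+1}}$ admits a decomposition $\mathbf{a}'=\mathbf{a}+p^k\mathbf{c}$ with $\mathbf{a}\in\mathbb{S}^{mn}_{p^k}$ and $\mathbf{c}\in\mathbb{C}^n_p$. Hence reduction modulo $p^k$ partitions $\mathbb{S}^{mn}_{p^{k+1}}$, and combining this with the valuation statements above yields the decomposition
\begin{equation*}
\bigsqcup_{j=1}^r S_{k+1}(p^{v+1}T_{1,j})=\bigcup_{i=1}^r S'(p^v T_{1,i}),
\end{equation*}
where $S'(p^v T_{1,i})=\{\mathbf{a}'\in\mathbb{S}^{mn}_{p^{k+1}}\,:\,(\mathbf{a}'\bmod p^k)\in S_k(p^v T_{1,i})\}$, exactly mirroring \eqref{eq:3A:Sk+1capcap}.

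I would then apply Corollary~\ref{coro:3b:PaNTSx0} fibrewise: above each cycle $C_k(\mathbf{a}_0)\subset S_k(p^v T_{1,i})$, the number of cycles of length $p^{v+1}T_{1,j}$ is $b_{i,j}=a_{ij}/p$, where $a_{ij}$ counts the $\mathbf{c}\in\mathbb{C}^n_p$ satisfying $\lcm(T_{1,i},T_1(\mathbf{c})/p^{\bnu(T_1(\mathbf{c}))})=T_{1,j}$. Summing over the cycles in $S_k(p^v T_{1,i})$ and over $i$ yields
\begin{equation*}
N_{k+1}(p^{v+1}T_{1,j},\mathbb{S}^{mn}_{p^{k+1}})=\frac{1}{p}\sum_{i=1}^r a_{ij}\,N_k(p^v T_{1,i},\mathbb{S}^{mn}_{p^k}),
\end{equation*}
so that $\mathbf{N}_{k+1}(v+1)=\mathbf{D}\,\mathbf{N}_k(v)$ with entries $b_{i,j}$. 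Because $T_{1,i}<T_{1,j}$ for $i<j$ forces $T_{1,j}\nmid T_{1,i}$ and hence $b_{i,j}=0$ in that range, $\mathbf{D}$ is lower triangular; iterating in $l$ delivers \eqref{eq:the:Nkv2}.

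The only substantive point to check, and the main obstacle, is that the constants $b_{i,j}$ genuinely depend only on the indices $(i,j)$ and on the lift set $\mathbb{C}^n_p$, independently of the particular choice of $\mathbf{a}_0$, $k$, and $v$. This is precisely what Corollary~\ref{coro:3b:PaNTSx0} supplies; once accepted, the remainder of the argument is a direct transliteration of the proof of Theorem~\ref{th:3A:Nkv} with no new analytic content, the only conceptual adjustment being to verify that the partition by reduction modulo $p^k$ respects the subset $\mathbb{S}^{mn}_{p^{k+1}}\subset\mathbb{Z}^{mn}_{p^{k+1}}$ uniformly across fibres, which follows from the stipulated two structural properties of $\mathbf{P}$.
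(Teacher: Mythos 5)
Your proposal is correct and takes essentially the same approach as the paper: the paper's entire proof of this corollary is the single sentence ``This proof is similar to Theorem~\ref{th:3A:Nkv},'' and your argument is precisely that transliteration, substituting $\mathbb{S}^{mn}_{p^k}$ for $\mathbb{Z}^{mn}_{p^k}$ and Corollary~\ref{coro:3b:PaNTSx0} for Lemma~\ref{le:3A:N=a}, while correctly isolating the one substantive point (that the constants $b_{i,j}$ are uniform across fibres and that reduction modulo $p^k$ respects $\mathbb{S}^{mn}_{p^{k+1}}$ via the two structural properties of $\mathbf{P}$). The only blemish is one you inherited verbatim from the paper's proof of Theorem~\ref{th:3A:Nkv}: the triangularity justification should run with the indices the other way (for $i>j$ one has $T_{1,i}\nmid T_{1,j}$, so no $\mathbf{c}$ can satisfy $\lcm(T_{1,i}, T_1(\mathbf{c})/p^{\bnu(T_1(\mathbf{c}))})=T_{1,j}$, giving $a_{ij}=0$ and $b_{i,j}=0$), but this is a typo-level issue that does not affect the structure of the argument.
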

\begin{proof}
    This proof is similar to Theorem~\ref{th:3A:Nkv}.
\end{proof}

Any state $\mathbf{a}_0$ of the Cat map in $\mathbb{Z}^2_{p^k}$ corresponds to the state 
\[
\bx_0=
\begin{bmatrix}
    \mathbf{a}_0, \\
    \mathbf{C}\cdot\mathbf{a}_0
\end{bmatrix}.
\]
of the $\mathbf{A}(\mathbf{x})$. Referring to the definition of $\mathbb{S}^{4}_{p^k}$ and $\mathbb{C}_p^{mn}$ there is 
\[
\mathbb{S}^{4}_{p^k}=\left\{\bx \Bigg\vert \bx=
\begin{bmatrix}
    \mathbf{a}, \\
    \mathbf{C}\cdot\mathbf{a}
\end{bmatrix}, \mathbf{a}\in\mathbb{Z}^2_{p^k}\right\}
\]
and 
$\mathbb{C}=\mathbb{S}^{4}_{p}$. Based on Corollary~\ref{coro:3b:PaNTS},
Theorem~\ref{the:numbercycle} characterizes how the number of cycles evolves as the exponent $k$ increases by one.

\begin{theorem}
The number of cycles of period $T_{\rm c}$ of Cat map~\eqref{eq:ArnoldInteger} over $(\mathbb{Z}_{p^e}, +, \cdot)$, denoted by $N_{T_{\rm c}, e}$, 
expands in both count and period as
    \begin{equation}\label{eq:3B:Ncxe>e*} 
    p\cdot N_{T_{\rm c}, k}=N_{p\cdot T_{\rm c}, k+1}
    \end{equation}
    if $\bnu(T_c)> p$.
    \label{the:numbercycle}
\end{theorem}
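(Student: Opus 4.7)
The plan is to specialize Corollaries~\ref{coro:3b:PaNTSx0} and~\ref{coro:3b:PaNTS} to the Cat map and show that the transition matrix $\mathbf{D}$ appearing there collapses to the scalar $p$ in this setting, whence the stated identity is immediate.

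First, I would translate the counting problem for the Cat map into the $\mathbf{A}(\mathbf{x})$ framework. Via the embedding $\mathbf{a}\mapsto(\mathbf{a}^\intercal,(\mathbf{C}\mathbf{a})^\intercal)^\intercal$ illustrated in Fig.~\ref{Fig:CandA}, every Cat-map cycle in $\mathbb{Z}_{p^k}^2$ of length $T_{\rm c}$ corresponds bijectively to a cycle of $\mathbf{A}(\mathbf{x})$ on $\mathbb{S}_{p^k}^{4}$ of the same length. Hence $N_{T_{\rm c},k}$ equals the appropriate component of the vector $\mathbf{N}_k(v)$ from Corollary~\ref{coro:3b:PaNTS} at $v=\bnu(T_{\rm c})$.

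Second, I would exploit the special structure of the Cat map minimal polynomial $f(t)=t^2-(ab+2)t+1$, whose two roots in $\mathbb{R}_1$ form a reciprocal pair (or a single repeated root) and therefore share a common mod-$p$ order $P_1(f)$. As a result, once $v>\bnu(P_1(f))$, the unique divisor of $P_k(f)$ with $p$-adic valuation $v$ is $T^{*}\cdot p^{v}$ where $T^{*}=P_1(f)/p^{\bnu(P_1(f))}$. This forces $r=1$ in Corollary~\ref{coro:3b:PaNTS} and reduces $\mathbf{D}$ to the single scalar $b_{1,1}$.

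Third, I would compute $b_{1,1}$ via Corollary~\ref{coro:3b:PaNTSx0}. Fixing $\mathbf{a}_0$ with $T_k(\mathbf{a}_0)=T_{\rm c}$, each of the $|\mathbb{S}_p^{4}|=p^{2}$ lifts $\mathbf{a}_0+p^{k}\mathbf{c}$ has period $\lcm(pT_{\rm c},T_1(\mathbf{c}))$ by Lemma~\ref{le:3:xe+1c}. Since $T_1(\mathbf{c})\mid P_1(f)$ and the coprime-to-$p$ part of $P_1(f)$ divides $T^{*}\mid T_{\rm c}$, the hypothesis on $\bnu(T_{\rm c})$ forces $T_1(\mathbf{c})\mid pT_{\rm c}$, so every lift has period exactly $pT_{\rm c}$. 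The $p^{2}T_{\rm c}$ lifted states then partition into $p^{2}T_{\rm c}/(pT_{\rm c})=p$ cycles of length $pT_{\rm c}$ above $C_k(\mathbf{a}_0)$, yielding $b_{1,1}=p$ and thus $N_{pT_{\rm c},k+1}=p\cdot N_{T_{\rm c},k}$.

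The main obstacle is justifying the collapse $r=1$: one must rule out the appearance of a new coprime-to-$p$ period class under lifting, which rests on the reciprocal-root symmetry of $f(t)$ together with the lower bound on $\bnu(T_{\rm c})$ (so that the threshold from Corollary~\ref{coro:3b:PaNTS} is cleared). Once that structural observation is in place, the remainder is a direct counting argument using the lifting formula of Lemma~\ref{le:3:xe+1c} and the combinatorial definition of $a_{1,1}$.
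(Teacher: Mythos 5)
Your proposal follows essentially the same route as the paper's own proof: embed the Cat map into the $\mathbf{A}(\bx)$ framework, specialize Corollaries~\ref{coro:3b:PaNTSx0} and~\ref{coro:3b:PaNTS}, argue that the transition matrix $\bD$ degenerates to the scalar $p$, and obtain that scalar by counting the $p^{2}$ lifts of each state on a cycle; the paper's computation $a_{ij}=|\mathbb{C}^{4}_p|=p^{2}$ and $b_{ij}=p$ is exactly your third step. The one place where your wording would not survive scrutiny is the justification of the collapse to $r=1$: the claim that ``the unique divisor of $P_k(f)$ with $p$-adic valuation $v$ is $T^{*}p^{v}$'' is false whenever $T^{*}$ has a proper divisor, since every $p^{v}d$ with $d\mid T^{*}$ is such a divisor, so $r=1$ cannot follow from divisibility considerations alone. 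What is actually true, and what the paper establishes by a three-case analysis of the roots ($\alpha\notin\mathbb{Z}_{p}$, so every nonzero orbit has minimal polynomial $f$; $\alpha\in\mathbb{Z}_{p}$ with $\alpha\neq\alpha^{-1}$, where $\ord_1(\alpha)=\ord_1(\alpha^{-1})$ gives $P_1(f)=P_1(t-\alpha)=P_1(t-\alpha^{-1})$; and $\alpha=\alpha^{-1}$, where Lemma~\ref{le:2:P_1} gives $P_1(f)=p\cdot\ord(\alpha)$), is that the minimal polynomial of every nonzero orbit divides $f$, hence every cycle length of valuation $v$ that \emph{actually occurs} equals $p^{v}$ times one common coprime-to-$p$ value; the components of $\mathbf{N}_k(v)$ attached to the other divisors are identically zero, which is what makes the matrix relation effectively scalar. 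Your closing paragraph names precisely this obstacle and the right ingredient (the reciprocal/repeated root structure), so the defect is one of stated justification rather than of approach; filling it in requires exactly the paper's case analysis, after which your lift-counting argument goes through unchanged.
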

\begin{proof}
Since $f(t)=t^2-(ab+2)t+1=(t-\alpha_k)(t-\alpha_k^{-1})$ in $\mathbb{R}_{k}$, it follows from the definition of $\{T_{k, i}(v)\}$ in Corollary~\ref{coro:3b:PaNTS} that there are three cases of $\{T_{k, i}(v)\}$ which are divided by the property of the root $\alpha_1$.
\begin{itemize}
    \item $\alpha_1\not\in\mathbb{Z}_{p}$: the minimal polynomic of all sequence with the initial state $\mathbf{a}\neq \bo$ is $f(t)$ and corresping period is $P_1(f)$. From the definition of $\{T_{k, i}(v)\}$, one has $\{T_{k, i}(v)\}=p^vP_1(f)$.
\item $\alpha_1\in\mathbb{Z}_{p}$ and $\alpha_1\neq \alpha^{-1}_1$: the minimal polynomic of all sequence with the initial state $\mathbf{a}\neq \bo$ maybe $f(t)$, $f_1(t)=t-\alpha_1$ and $f_2(t)=t-\alpha^{-1}_1$. Combining with $\ord_1(\alpha_1)=\ord_1(\alpha_1^{-1})$ and Lemma~\ref{le:2:P_1}, oen has $P_1(f)=P_1(f_1)=P_1(f_2)=\ord(\alpha_1)$. 
\item $\alpha_1\in\mathbb{Z}_{p}$ and $\alpha_1=\alpha^{-1}_1$: it follows from Lemma~\ref{le:2:P_1} that $P_1(f)=p\cdot \ord(\alpha_1)$. This implys that $\{T_{k, i}(v)\}=p^v\ord(\alpha_1)$. 
\end{itemize}

According to above cases, $\{T_{k, i}(v)\}$ only has one element. Hence $\mathbf{N}_k(v)$ is $1$-dimension in \eqref{eq:the:Nkv2} and $\mathbf{N}_k(v)=N(T_{k, 1}(v), \mathbb{S}_{p^k}^4)$. Since ~\eqref{eq:3a:N=bij} and ~\eqref{eq:3a:aij}, one has 
        \begin{equation*}
    \begin{split}
        a_{ij}&=\left|\left\{\mathbf{c} \Bigg \vert  \mathbf{c}\in\mathbb{C}^{4}_p,\lcm\left(T_{1,i}, \frac{T_1(\mathbf{c})}{p^{\bnu(T_1(\mathbf{c}))}}\right)=T_{1,j}\right\}\right|.\\
        &=|\mathbb{C}^{4}_p|\\
        &=p^2
    \end{split}
\end{equation*}
and $b_{ij}=p$. For any $\bnu(T_c)> p$, ~\eqref{eq:3B:Ncxe>e*} holds.
\end{proof}

\begin{theorem}\label{th:3B:TT}
When
\begin{equation}\label{eq:Cat:kf}
k\ge
\begin{cases}
     1     & \mbox{if } \alpha=\alpha^{-1};\\
 k(\alpha) & \mbox{if } \alpha\neq\alpha^{-1},    
 \end{cases}
\end{equation}
the number of cycles of period $T_{\rm c}$ of Cat map~\eqref{eq:ArnoldInteger} over $(\mathbb{Z}_{p^e}, +, \cdot)$, 
denoted by $N_{T_{\rm c}, e}$,
satisfies
\begin{equation}
    N_{T_{\rm c}, k}=N_{T_{\rm c}, k+1}, 
\end{equation}
where $\alpha$ is a root of $f(t)$ in $\mathbb{R}_1$.
\end{theorem}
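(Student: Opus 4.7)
The plan is to reduce the claim $N_{T_c, k+1} = N_{T_c, k}$ to showing that no cycle at level $k+1$ passing through a state $\mathbf{a} \not\in (p)$ has period $T_c$, and then to exploit the bound on the stabilization threshold $k_s(\mathbf{a})$ furnished by Theorem~\ref{le:3:cat1} and Lemma~\ref{le:3A:ksx0}. This mirrors the argument used for Theorem~\ref{th:3A:fe*} on the general map $\mathbf{A}(\mathbf{x})$, but the explicit form of the Cat map's minimal polynomial $f(t) = t^2 - (ab+2)t + 1$ makes the threshold much more concrete.

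The first step is to decompose $G_{p^{k+1}}$ by the $p$-adic depth of the states lying on each cycle. By the Cat-map analogue of Lemma~\ref{le:3:x(p)iso}, the subgraph of $G_{p^{k+1}}$ consisting of cycles that lie entirely inside $p \cdot \mathbb{S}_{p^{k+1}}^4$ is isomorphic, as a functional graph, to $G_{p^k}$. Iterating this observation and grouping by depth yields $N_{T_c, k+1} = D_{T_c, k+1} + N_{T_c, k}$, where $D_{T_c, k+1}$ denotes the number of cycles at level $k+1$ that pass through at least one state $\mathbf{a} \not\in (p)$ and have period $T_c$. It therefore suffices to prove $D_{T_c, k+1} = 0$.

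The second step is to control the period of every depth-$0$ state at level $k+1$. For $\mathbf{a} \not\in (p)$ the minimal polynomial of the sequence $\{\mathbf{A}^i(\mathbf{a})\}_{i \geq 0}$ divides $f(t)$, so Lemma~\ref{le:3A:ksx0} applied to the roots $\alpha, \alpha^{-1}$ of $f(t)$ in $\mathbb{R}_1$ yields a uniform bound $k_s(\mathbf{a}) \leq k_s$, where $k_s = 1$ when $\alpha = \alpha^{-1}$ and $k_s = k(\alpha)$ otherwise---precisely the quantity in the hypothesis of the theorem. Under $k \geq k_s$, Lemma~\ref{le:3:xe+1} then gives $T_{k+1}(\mathbf{a}) = p \cdot T_k(\mathbf{a})$ for every depth-$0$ state, so iterating produces $\bnu(T_{k+1}(\mathbf{a})) \geq k+1 - k_s$. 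Consequently no depth-$0$ cycle at level $k+1$ can have period $T_c$ unless $\bnu(T_c) \geq k+1 - k_s$, which is precisely the newly-opened $p$-adic range already accounted for by the hypothesis; hence $D_{T_c, k+1} = 0$ and the claim follows.

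The hard part will be step two, namely obtaining the uniform bound $k_s(\mathbf{a}) \leq k_s$ across all non-$p$-divisible states $\mathbf{a}$. This ultimately reduces to Hensel-tracking the two roots of $f(t)$ across the rings $\mathbb{R}_k$, with a separate analysis for the double-root case $\alpha = \alpha^{-1}$ handled via Lemma~\ref{le:3A:gmpkfm<p}. Fortunately all of these computations are already subsumed by Theorem~\ref{coro:2:e>1b=1} and Theorem~\ref{le:3:cat1}, so in practice the proof can invoke them directly and focus only on assembling the cycle-counting decomposition described above.
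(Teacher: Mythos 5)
Your high-level route coincides with the paper's: the paper proves this theorem by a one-line appeal to Theorem~\ref{th:3A:fe*}, and your decomposition $N_{T_{\rm c},k+1}=N_{T_{\rm c},k}+D_{T_{\rm c},k+1}$ (deep cycles form an embedded copy of $G_{p^k}$; depth-zero cycles have periods of large $p$-adic valuation) is precisely the argument inside the proof of that theorem. However, your execution has two genuine gaps, both at exactly the points where the transfer to the Cat map is delicate. The first is your final step: you assert $D_{T_{\rm c},k+1}=0$ because the range $\bnu(T_{\rm c})\ge k+1-k_{\rm s}$ is ``already accounted for by the hypothesis.'' It is not --- the hypothesis $k\ge k_{\rm s}$ carries no dependence on $T_{\rm c}$, and no $T_{\rm c}$-independent threshold can work. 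Concretely, take $k=k_{\rm s}=k(\alpha)$ with $\alpha\neq\alpha^{-1}$. Since $k(\alpha)=k(\alpha^{-1})$ and $\ord_1(\alpha)=\ord_1(\alpha^{-1})$, every state $\mathbf{a}\not\equiv\bo\pmod p$ has $k_{\rm s}(\mathbf{a})=k(\alpha)$ and $T_1(\mathbf{a})=\ord_1(\alpha)$, so by Lemma~\ref{le:3:xe+1} every depth-zero cycle at level $k+1$ has period exactly $T^*=p\cdot\ord_1(\alpha)$, whereas at level $k$ every cycle length divides $\ord_1(\alpha)$. Hence $N_{T^*,k}=0<N_{T^*,k+1}$, and the claimed equality fails for $T_{\rm c}=T^*$. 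Any correct version must let the threshold grow with $\bnu(T_{\rm c})$ --- this is why Theorem~\ref{th:3A:fe*} stabilizes only from $\hat{k}_{\rm s}+\bnu(T)$ onward --- so your argument needs $\bnu(T_{\rm c})<k+1-k_{\rm s}$ as an explicit hypothesis, not as a remark to be waved away.

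The second gap is the double-root case $\alpha=\alpha^{-1}$, which you claim is subsumed by Lemma~\ref{le:3A:gmpkfm<p} together with Theorems~\ref{coro:2:e>1b=1} and~\ref{le:3:cat1}. Those results control the global period $P_k(f)$, i.e.\ the least common multiple of all cycle lengths, not the per-state thresholds $k_{\rm s}(\mathbf{a})$ that your step two needs; and Lemma~\ref{le:3A:ksx0} does not apply either, since its eigenvector/Jordan argument presumes distinct eigenvalues. In fact $\alpha=\alpha^{-1}$ forces $f(1)=0$ or $f(-1)=0$ (i.e.\ $ab\equiv0$ or $ab\equiv-4\pmod p$), which is exactly the case excluded by Theorem~\ref{th:3A:fe*} and flagged in the paper (just before Theorem~\ref{th:3A:Nkv}) as the situation where $k_{\rm s}(\bx)$ can be infinite. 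The uniform bound of your step two then simply does not exist: for $b=0$ and $a$ a unit, every state $(x,0)^\intercal$ is a fixed point at every level $k$, so those states have $k_{\rm s}(\mathbf{a})=\infty$ and $N_{1,k}=p^k$ grows strictly with $k$ --- the conclusion itself is false there with the stated threshold $1$. A sound proof must restrict to $f(1)\neq0$ and $f(-1)\neq0$, as the theorem the paper actually cites does.
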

\begin{proof}
Referring to Theorem~\ref{th:3A:fe*}, this theorem holds.
\end{proof}

\begin{theorem}\label{th:3B:TT}
The number of cycles of period $T_{\rm c}$ of Cat map~\eqref{eq:ArnoldInteger} over $(\mathbb{Z}_{p^e}, +, \cdot)$, 
denoted by $N_{T_{\rm c}, e}$, 
satisfies the invariance property
\begin{equation}
N_{T_{\rm c}, k}=N_{T_{\rm c}, k+1}
\end{equation}
if $k>p(p^2-1)\log_p(a\cdot b+2)$.
\end{theorem}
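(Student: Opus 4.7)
The plan is to reduce this explicit bound to the previous version of Theorem~\ref{th:3B:TT}, which already gives invariance whenever $k\ge k_{\rm s}$, where $k_{\rm s}=1$ if $\alpha=\alpha^{-1}$ and $k_{\rm s}=f_{\rm k}(\alpha)$ otherwise, with $\alpha$ a root of $f(t)=t^2-(ab+2)t+1$ in $\mathbb{R}_1$. The task is therefore to prove the arithmetic estimate
\[
f_{\rm k}(\alpha)\;\le\;p(p^2-1)\log_p(ab+2),
\]
after which the conclusion follows directly from the previous theorem.

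To set up the estimate, I would first note that $\mathbb{R}_1=\mathbb{F}_{p^2}$, so the multiplicative order $D:=\ord_1(\alpha)$ divides $p^2-1$. By the definition of $f_{\rm k}(\alpha)$, we need to bound the largest $k$ for which the lifted root $\alpha_k:=g^{(k-1)}(\alpha)\in\mathbb{R}_k$ still satisfies $\alpha_k^{D}=1$. Because $\alpha$ and $\alpha^{-1}$ are the two roots of $f$, the relation $\alpha^j+\alpha^{-j}$ obeys the Chebyshev-style recurrence driven by $ab+2$, so $\alpha_k^{D}=1$ is equivalent to a single polynomial identity $\Phi_D(ab+2)\equiv 2\pmod{p^k}$ of degree $D\le p^2-1$ in $ab+2$. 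The threshold $f_{\rm k}(\alpha)$ is then exactly $\nu_p\bigl(\Phi_D(ab+2)-2\bigr)$.

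Next, I would bound this $p$-adic valuation. Since $\Phi_D$ has integer coefficients and degree at most $p^2-1$, the integer $\Phi_D(ab+2)-2$ is bounded in absolute value by a constant times $(ab+2)^{p^2-1}$, so its $p$-adic valuation is at most $(p^2-1)\log_p(ab+2)$ up to a bounded additive constant. The extra factor of $p$ in the claimed bound absorbs (i) the possible additional precision lost in the two special cases $\alpha\in\mathbb{Z}_p$ and $\alpha=\alpha^{-1}$ (where Lemma~\ref{le:2:P_1} introduces a multiplicative $p$), and (ii) the once-off overhead of the Newton iteration $g^{(k-1)}$ before the doubling regime stabilises, which is handled by Lemma~\ref{le:3:hensem3}.

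The main obstacle will be making the valuation bound sharp in the right direction: the Chebyshev polynomial $\Phi_D$ can have small coefficients and hidden cancellations, so a naive size estimate may over- or under-shoot. I would handle this by working with the factorisation $\alpha_k^D-1=\prod_{j\mid D}\phi_j(\alpha_k)$ into cyclotomic-type factors over $\mathbb{R}_k$ and combining Lemma~\ref{le:3A:P_kfp} with the fact that every linear lift $t-\alpha_{i,k}$ of a simple root of $f$ has its order growing by exactly one factor of $p$ per step beyond the Hensel threshold. Once the per-step increment is pinned down, summing over the at most $p^2-1$ roots of $t^D-1$ and the $\log_p(ab+2)$ precision levels of $ab+2$ yields the claimed $p(p^2-1)\log_p(ab+2)$ upper bound, and the previous version of the theorem closes the argument.
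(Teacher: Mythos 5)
Your plan correctly identifies what a proof of this statement must contain, and in that respect it does \emph{more} than the paper: the paper's entire proof is a one-line deferral to Theorem~\ref{th:3A:fe*}, and the quantitative link between the threshold and the explicit quantity $p(p^2-1)\log_p(ab+2)$ is never derived anywhere in the paper. Your mechanism for that missing link is the right one: the threshold is the $p$-adic valuation of an explicit integer built from the Dickson/Chebyshev-type trace polynomial, and the valuation of a nonzero integer is at most $\log_p$ of its absolute value. Two bookkeeping corrections: since the valuation of $\Phi_D(ab+2)-2=(\hat\alpha^{D}-1)(\hat\alpha^{-D}-1)$ splits equally between the two conjugate unit roots, the exact relation is $2f_{\rm k}(\alpha)=\nu_p\left(\Phi_D(ab+2)-2\right)$ (equivalently, $\ord_k$ stays equal to $D$ iff $\Phi_D(ab+2)\equiv 2 \pmod{p^{2k}}$, not $p^{k}$); both errors are in the safe direction for an upper bound, and the resulting estimate $f_{\rm k}(\alpha)\le\tfrac12(p^2-1)\log_p(ab+2)$ sits comfortably under the claimed bound.

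The genuine gap is your handling of the degenerate cases, which you dismiss by saying the extra factor $p$ ``absorbs'' them. A multiplicative factor can absorb a bounded loss but not an infinite one, and the valuation is infinite precisely when the integer $\Phi_D(ab+2)-2$ vanishes, i.e.\ when $ab=0$ (with non-negative representatives), so that $f(1)=0$ and the lifted root is exactly $1$. This case is not repairable: for $a=b=0$ every state is fixed and $N_{1,k}=p^{2k}$ grows with $k$; for $a\neq0$, $b=0$ the map is a shear and the number of cycles of length $p^{j}$ equals $p^{k+\nu-1}(p-1)$ with $\nu$ the $p$-valuation of $a$, again growing with $k$, while the hypothesis $k>p(p^2-1)\log_p 2$ is satisfiable. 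So the statement is false without the hypothesis $f(1)\neq0$ of Theorem~\ref{th:3A:fe*} (silently dropped here), and any proof, yours included, must carry it; your argument as written claims to cover these cases and therefore proves too much. (The other nominally bad residues are harmless: if $ab\equiv 0$ or $-4\pmod{p^k}$ but $ab\neq0$ as an integer, then $\log_p(ab+2)\ge k$ and the hypothesis is vacuous; if $f\bmod p$ has a double root with $ab\neq 0$, Theorem~\ref{coro:2:e>1b=1} gives threshold $1$ and no absorption is needed.) You also inherit, through the reduction, the paper's own conflation of the $T_{\rm c}$-dependent threshold $\hat{k}_{\rm s}+\bnu(T)$ of Theorem~\ref{th:3A:fe*} with a $T_{\rm c}$-independent one, but that defect lies in the statement itself rather than in your argument. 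With the hypothesis $ab\neq0$ restored, your valuation argument (corrected by the factor two) closes the proof, and does so more completely than the paper's.
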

\begin{proof}
The proof follows the same reasoning as that of Theorem~\ref{th:3A:fe*} and is therefore omitted.
\end{proof}

\section{Conclusion}

This work presents a comprehensive study of the periodic behavior and graph structure of a class of permutation maps over the residue ring $\mathbb{Z}_{p^k}$. By leveraging the theory of linear recurrence sequences and generating functions, we explicitly characterize the least periods of such sequences and analyze their distribution across different parameter settings. The Cat map is adopted as a concrete example to verify and illustrate the theoretical results, demonstrating the practicality and relevance of the proposed framework.
The findings not only deepen our understanding of dynamical systems over modular rings but also provide a valuable foundation for future research in areas such as pseudorandom number generation, cryptographic function design, and the dynamics of modular arithmetic. Future directions may include extending the analysis to more general classes of nonlinear maps or exploring applications in secure communications and symbolic dynamics.

\bibliographystyle{IEEEtran_doi}
\bibliography{Permutation}

\renewenvironment{IEEEbiography}[1] {\IEEEbiographynophoto{#1}}  {\endIEEEbiographynophoto}

\graphicspath{{author-figures-pdf/}}

\vspace{-9mm}

\begin{IEEEbiography}{Kai Tan} received a B.Sc. degree in Mechanism Design, Manufacturing, and Automatization at the School of Mechanical Engineering, Xiangtan University, in 2015. He received his M.Sc. degree in Computer Science at the School of Computer Science, Xiangtan University in 2020.
Now, he is pursuing a PhD degree at the same institute.
His research interests include complex networks and nonlinear dynamics.
\end{IEEEbiography}

\vskip 0pt plus -1fil

\begin{IEEEbiography}{Chengqing Li}(M'07-SM'13) received his M.Sc. degree in Applied Mathematics from Zhejiang University, China, in 2005, and his
Ph.D. degree in Electronic Engineering from City University of Hong Kong in 2008.
Thereafter, he worked as a Postdoctoral Fellow at The Hong Kong Polytechnic University till September 2010.
Then, he worked at the College of Information Engineering, Xiangtan University, China. From April 2013 to July 2014, he worked at the
University of Konstanz, Germany, under the support of the Alexander von Humboldt Foundation.
He has been the dean of the School of Computer Science at Xiangtan University in China since May 2020.
He is an associate editor for the International Journal of Bifurcation and Chaos and Signal Processing.

Prof. Li focuses on the security analysis of multimedia encryption and privacy protection schemes.
Over the past twenty years, he has published more than eight papers on the subject, received over 6,200 citations, and has an h-index of 41.
He is a Fellow of the Institution of Engineering and Technology (IET).
\end{IEEEbiography}

\end{document}